\documentclass[preprint,nonatbib]{elsarticle}
\usepackage{fullpage} 

\makeatletter
\let\c@author\relax
\makeatother

\usepackage[table]{xcolor}
\usepackage{amsmath}
\usepackage{amsthm}
\usepackage{amssymb} 
\usepackage{hyperref}
\usepackage{graphicx}
\usepackage{float}
\usepackage{tikz-cd}
\usepackage{caption}
\usepackage{subcaption}
\usepackage{array}
\usepackage{extarrows}
\usepackage{multirow}
\usepackage{tikz}
\usepackage{makecell}
\usepackage{hhline}
\usepackage{cleveref}
\usetikzlibrary{cd}

\DeclareMathOperator{\LS}{2-RP}
\DeclareMathOperator{\LC}{3-RP}
\DeclareMathOperator{\RP}{-RP}

\allowdisplaybreaks[3]

\newtheorem{theorem}{Theorem}[section]
\newtheorem{lemma}[theorem]{Lemma}

\newtheorem{corollary}[theorem]{Corollary}

\theoremstyle{definition}
\newtheorem{definition}[theorem]{Definition}
\newtheorem{example}[theorem]{Ex.}
\newtheorem{construction}[theorem]{Construction}
\newtheorem{append}[theorem]{}

\numberwithin{equation}{section}

\usepackage{biblatex} 
\addbibresource{references.bib}

\hypersetup{pdfauthor=author}

\makeatletter
\def\ps@pprintTitle{%
  \let\@oddhead\@empty
  \let\@evenhead\@empty
  \def\@oddfoot{\reset@font\hfil\thepage\hfil}
  \let\@evenfoot\@oddfoot
}
\makeatother

\begin{document}

\title{Latin hypercubes realizing integer partitions}

\author[1]{Diane Donovan}
\ead{dmd@maths.uq.edu.au}
\author[1]{Tara Kemp\corref{cor1}}
\ead{t.kemp@uq.net.au}
\author[1]{James Lefevre}
\ead{j.lefevre@uq.edu.au}

\affiliation[1]{organization={School of Mathematics and Physics, ARC Centre of Excellence, Plant Success in Nature and Agriculture},
    addressline={The University of Queensland},
    city={Brisbane},
    postcode={4072},
    country={Australia}}

\cortext[cor1]{Corresponding author}

\begin{abstract}
    For an integer partition $h_1 + \dots + h_n = N$, a 2-realization of this partition is a latin square of order $N$ with disjoint subsquares of orders $h_1,\dots,h_n$. The existence of 2-realizations is a partially solved problem posed by Fuchs. In this paper, we extend Fuchs' problem to $m$-ary quasigroups, or, equivalently, latin hypercubes. We construct latin cubes for some partitions with at most two distinct parts and highlight how the new problem is related to the original.
\end{abstract}

\begin{keyword}
    latin square \sep permutation cube \sep realization
\end{keyword}

\maketitle

\section{Introduction}

L. Fuchs asked the following question \cite{keedwell2015latin}: If $n$ is any positive integer and $n=n_1+n_2+\dots+n_k$ any fixed partition of $n$, is it possible to find a quasigroup $(Q,\circ)$ of order $n$ which contains subquasigroups on the sets $Q_1,Q_2,\dots,Q_k$ of orders $n_1,n_2,\dots,n_k$ respectively, whose set theoretical union is $Q$? The problem of finding such a quasigroup is equivalent to finding a latin square of order $n$ with disjoint subsquares of orders $n_1,n_2,\dots,n_k$ (these subsquares must be row, column and symbol disjoint).

It is easy to find partitions for which such a quasigroup does not exist, however the problem of existence is far from complete. Much work has been done by considering latin squares, which are equivalent structures to quasigroups, and such a latin square is known as a realization or partitioned incomplete latin square (PILS).

For partitions with at most four parts, existence has been completely determined. Many cases have been studied for partitions with 5 or 6 parts, however the problem is not yet solved. For any partition, existence has been determined when the parts are of at most two distinct sizes.

The same problem can be posed for $m$-ary quasigroups, which are equivalent to $m$-dimensional latin hypercubes. In this paper, we investigate this problem by first considering latin cubes and also by examining the relationships between latin hypercubes of different dimenions.

\begin{definition}
    A \emph{latin square} of order $N$ is an $N\times N$ array, $L$, on a set of $N$ symbols such that each symbol occurs exactly once in each row and column. An entry of $L$ is defined by its row and column position as $L(i,j)$.

    An $m\times m$ sub-array of $L$ that is itself a latin square of order $m$ is a \emph{subsquare} of $L$. Subsquares are disjoint if they share no row, column or symbol of $L$.
\end{definition}

For any positive integer $a$, we define $[a]$ to be the set of integers $\{1,2,\dots,a\}$. Given another integer $b$, $b+[a]$ is the set $\{b+1,b+2,\dots,b+a\}$.

We will assume that all latin squares of order $N$ use the symbols of $[N]$.

\begin{example}
\label{order5square}
The array in \autoref{fig:2,1,1,1} is a latin square of order 5, and the highlighted cells indicate a subquare of order 2 and three subsquares of order 1.
\begin{figure}[h]
    \centering
    $\arraycolsep=4pt\begin{array}{|c|c|c|c|c|} \hhline{|*{5}{-|}}
    \cellcolor{lightgray}1 & \cellcolor{lightgray}2 & 4 & 5 & 3\\ \hhline{|*{5}{-|}}
    \cellcolor{lightgray}2 & \cellcolor{lightgray}1 & 5 & 3 & 4\\ \hhline{|*{5}{-|}}
    4 & 5 & \cellcolor{lightgray}3 & 2 & 1\\ \hhline{|*{5}{-|}}
    5 & 3 & 1 & \cellcolor{lightgray}4 & 2\\ \hhline{|*{5}{-|}}
    3 & 4 & 2 & 1 & \cellcolor{lightgray}5\\ \hhline{|*{5}{-|}}\end{array}$
    \caption{}
    \label{fig:2,1,1,1}
\end{figure}
\end{example}

\begin{definition}
    An \emph{integer partition} of $N$ is a sequence of non-increasing integers $P = (h_1,h_2,h_3,\dots)$ where $\sum_{i=1}^\infty h_i = N$.
\end{definition}

An integer partition $P = (h_1,h_2,\dots,h_n)$ will be represented by $(h_1h_2\dots h_n)$ or $(h_{i_1}^{\alpha_1}h_{i_2}^{\alpha_2}\dots h_{i_m}^{\alpha_m})$, where there are $\alpha_j$ copies of $h_{i_j}$ in the partition.

\begin{definition}
     For an integer partition $P = (h_1,h_2,\dots,h_n)$ of $N$ with $h_1\geq h_2\geq \dots\geq h_n > 0$, a \emph{2-realization} of $P$, denoted $\LS(h_1h_2\dots h_n)$, is a latin square of order $N$ with pairwise disjoint subsquares of order $h_i$ for each $i\in [n]$.
     We say that a 2-realization is in \emph{normal form} if the subsquares are along the main diagonal and the $i^{th}$ subsquare, $H_i$, is of order $h_i$.
\end{definition}

The latin square in Example \ref{order5square} is a 2-realization of the partition $(2^11^3)$.

The existence of 2-realizations for $n=1,2,3,4$ has been completely determined in \cite{heinrich2006latin}.
\begin{theorem}
\label{small n squares}
    Take a partition $(h_1,h_2,\dots,h_n)$ of $N$ with $h_1\geq h_2\geq \dots\geq h_n > 0$. Then a $\LS(h_1h_2\dots h_n)$
    \begin{itemize}
        \item always exists when $n=1$;
        \item never exists when $n=2$;
        \item exists when $n=3$ if and only if $h_1=h_2=h_3$;
        \item exists when $n=4$ if and only if $h_1=h_2=h_3$, or $h_2=h_3=h_4$ with $h_1\leq 2h_4$.
    \end{itemize}
\end{theorem}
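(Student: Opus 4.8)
The plan is to base everything on one structural observation, which I will call the \emph{confinement property}, and then dispatch the four values of $n$ in turn. Put a 2-realization in normal form, and for each $i$ let $R_i$, $C_i$, $S_i$ be the rows, columns and symbols of $H_i$; these partition the rows, columns and symbols of $L$. Fix $s\in S_i$: since $s$ occurs once in every row of $L$ it occurs exactly $h_i$ times among the rows $R_i$, but it already occurs $h_i$ times inside $H_i$ (once per row of that subsquare), so \emph{every} occurrence of $s$ in the rows $R_i$ lies in the columns $C_i$, and symmetrically in the columns. Consequently the off-diagonal block $B_{ij}$ (rows $R_i$, columns $C_j$, $i\neq j$) uses no symbol of $S_i$ and none of $S_j$, hence draws its entries from $\bigcup_{k\neq i,j}S_k$. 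Now $n=1$ is immediate (any latin square of order $N$, e.g.\ the Cayley table of $\mathbb{Z}_N$). For $n=2$ the block $B_{12}$ would need symbols outside $S_1\cup S_2=[N]$ while having $h_1h_2\geq 1$ cells to fill, so no $\LS(h_1h_2)$ exists.

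For $n=3$, necessity runs the same way: $B_{21}$ has size $h_2\times h_1$ and, since only three symbol classes exist, its entries lie entirely in $S_3$; each of its rows carries $h_1$ distinct symbols from the $h_3$ of $S_3$, so $h_1\le h_3$, and with $h_1\ge h_2\ge h_3$ this forces $h_1=h_2=h_3$. For sufficiency when $h_1=h_2=h_3=h$ I would take an idempotent latin square $\sigma$ of order $3$ (with $\sigma(i,i)=i$) and a latin square $A$ of order $h$, then set $L$ on row labels $(i,x)$, column labels $(j,y)$ and symbols $(\sigma(i,j),A(x,y))$ for $i,j\in[3]$, $x,y\in[h]$; latinness of $L$ follows from that of $\sigma$ and $A$, and idempotency makes each diagonal block a copy of $A$ on symbol class $S_i$, yielding three disjoint subsquares of order $h$.

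For $n=4$ I would build the two required families by the same block philosophy on a $4\times 4$ array of blocks with band sizes equal to the parts: fill the diagonal blocks with latin squares of orders $h_1,\dots,h_4$ (these become $H_1,\dots,H_4$), and fill each $B_{ij}$ from $\bigcup_{k\neq i,j}S_k$ so that every actual row and column of $L$ is a permutation of $[N]$. For $h_1=h_2=h_3=h\ge h_4$, and for $h_2=h_3=h_4=k$ with $h_1\le 2k$, the symbol counts match up so the filling reduces to completing a compatible system of latin rectangles; in the second family the bound $h_1\le 2k=h_3+h_4$ is exactly what makes the largest off-diagonal blocks $B_{12},B_{21}$ (longest side $h_1$, over the $2k$ symbols of $S_3\cup S_4$) completable. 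I would discharge these completions with Hall's theorem and the Ryser latin-rectangle completion criterion.

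Necessity for $n=4$ is the hard direction and the main obstacle. The confinement property applied to $B_{21}$ gives $h_1\le h_3+h_4$ for free (a row of $B_{21}$ has $h_1$ distinct entries among the $h_3+h_4$ symbols of $S_3\cup S_4$), which in the family $h_2=h_3=h_4$ is precisely $h_1\le 2h_4$ and is automatic when $h_1=h_2=h_3$. What remains is to show that if fewer than three parts are equal then no realization exists even when this inequality holds; for example $(3,2,2,1)$ passes the inequality yet must be excluded. Here I would sharpen the confinement property: when a block like $B_{21}$ is \emph{saturated} (its long side equals $|S_3\cup S_4|$, forcing each of its rows to be a full permutation of $S_3\cup S_4$), this over-constrains the neighbouring blocks, and chasing a single small-class symbol $s\in S_3$ around the array forces it into every row and column outside band $4$, trapping its last required occurrence inside $H_4$, which carries only $S_4$ — exactly the contradiction that kills $(2^21^2)$. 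The genuine difficulty is that this saturation argument is clean only on the boundary $h_1=h_3+h_4$; the interior mixed shapes such as $(3,3,2,2)$ require the same idea pushed through a global transversal count, using that each $s\in S_1$ must form a full transversal of the lower-right $(N-h_1)\times(N-h_1)$ region avoiding $H_2,H_3,H_4$, and that these $h_1$ disjoint transversals cannot coexist with the forced placements of the smaller classes. I expect this simultaneous transversal-packing obstruction, and the finite case check it entails, to be where essentially all the work lies.
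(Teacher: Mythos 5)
Your confinement property is correct, and it carries the first three bullets completely: the block $B_{ij}$ avoiding $S_i\cup S_j$ disposes of $n=2$ at once; the count on the $h_2\times h_1$ block $B_{21}$ (each row showing $h_1$ distinct symbols of $S_3$) gives $h_1\le h_3$ and hence equality for $n=3$; and the direct product of an idempotent latin square of order $3$ with a latin square $A$ of order $h$ is a valid construction for $\LS(h^3)$. For comparison, note that the paper does not prove this theorem at all — it quotes it from \cite{heinrich2006latin} — so the proposal is to be judged purely on its own completeness, and up to $n=3$ it is sound and essentially the standard argument.

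The $n=4$ bullet, however, is a genuine gap in both directions, and you largely concede this yourself. For sufficiency, the plan to ``fill each $B_{ij}$ from $\bigcup_{k\neq i,j}S_k$ and discharge the completions with Hall's theorem and the Ryser criterion'' does not work as stated: Hall/Ryser conditions are local to a block or a rectangle, and your own counterexample $(3,2,2,1)$ shows that every block can be locally completable (it satisfies $h_1\le h_3+h_4$ and all the blockwise counting constraints) while no global realization exists, so blockwise completability cannot be the mechanism; what is actually needed is a global construction, either explicit squares or the outline-rectangle amalgamation machinery of Hilton (\Cref{outline rectangle to square}), where one writes down a $4\times 4$ array of symbol multisets and verifies the row, column and symbol counts — that verification, not Hall's theorem, is the substance, and it is absent here. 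For necessity, confinement only yields $h_1\le h_3+h_4$, which is strictly weaker than the theorem's condition: partitions such as $(3,2,2,1)$ and $(3,3,2,2)$ satisfy it yet must be excluded. Your proposed remedy — the saturation chase on the boundary $h_1=h_3+h_4$ and a ``simultaneous transversal-packing obstruction'' for interior shapes (using that each $s\in S_1$ traces a transversal of the lower-right $(N-h_1)\times(N-h_1)$ region avoiding $H_2,H_3,H_4$, which is a correct observation) — is announced rather than executed: no contradiction is actually derived for any non-boundary excluded shape, and you explicitly flag this as where ``essentially all the work lies.'' As it stands, the fourth bullet is asserted, not proven.
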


The problem has also been solved for partitions with subsquares of at most two distinct orders. D{\'e}nes and P{\'a}sztor \cite{denes1963some} gave the result for subsquares all of the same order and the result for two different orders was considered by Heinrich \cite{heinrich1982disjoint} and finished by Kuhl and Schroeder \cite{kuhl2018latin}.
\begin{theorem}
\label{a^n square}
    For $n\geq 1$ and $a\geq 1$, a $\LS(a^n)$ exists if and only if $n\neq 2$.
\end{theorem}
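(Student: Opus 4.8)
The plan is to treat the two directions separately, reducing the ``only if'' almost entirely to \autoref{small n squares}. For necessity, observe that $a^n$ with $n=2$ is the two-part partition $(a,a)$ of $2a$, so the nonexistence of $\LS(a^2)$ is exactly the $n=2$ clause of \autoref{small n squares}. The case $n=1$ is immediate, since a $\LS(a^1)$ is nothing but a latin square of order $a$, which always exists. Hence the whole content of the theorem is the construction of a $\LS(a^n)$ for every $a\ge 1$ and every $n\ge 3$.

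For sufficiency my main device is a direct-product construction that reduces the general case to $a=1$. First note that a $\LS(1^n)$ is precisely a latin square $A$ of order $n$ carrying a transversal: the $n$ pairwise disjoint order-$1$ subsquares are $n$ cells occupying distinct rows, distinct columns and distinct symbols. Suppose such an $A$ exists; after permuting rows, columns and symbols I may assume its transversal lies on the main diagonal with $A(i,i)=i$ for all $i\in[n]$. Taking any latin square $B$ of order $a$ on $[a]$, I would form the array $C$ of order $an$, with rows, columns and symbols all indexed by $[n]\times[a]$, defined by $C\big((i,p),(j,q)\big)=\big(A(i,j),B(p,q)\big)$. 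A one-line check shows $C$ is latin. The sub-array on rows and columns indexed by $\{i\}\times[a]$ then has entries $\big(A(i,i),B(p,q)\big)=\big(i,B(p,q)\big)$, so it is a copy of $B$ and hence an order-$a$ subsquare whose symbol set is $\{i\}\times[a]$; since the diagonal values $A(i,i)=i$ are distinct, these $n$ subsquares are symbol-disjoint, and they are trivially row- and column-disjoint. This yields a $\LS(a^n)$ in normal form.

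Everything therefore comes down to exhibiting, for each $n\ge 3$, a latin square of order $n$ with a transversal, i.e. a $\LS(1^n)$. For odd $n$ the Cayley table of $\mathbb{Z}_n$ suffices: with $A(i,j)=i+j \pmod n$ the main diagonal reads $2i \pmod n$, a permutation of $\mathbb{Z}_n$ because $2$ is invertible modulo an odd number. The genuinely delicate case, which I expect to be the main obstacle, is even $n$: here the cyclic table of $\mathbb{Z}_n$ has no transversal at all, so I must produce a different latin square of even order $\ge 4$ that possesses one. I would handle this either by invoking the classical fact that a latin square of every order $n\neq 2$ admits a transversal, or by a small explicit perturbation of the cyclic square, altering a bounded set of cells to force a diagonal transversal while keeping every row and column a permutation. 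The only real work is checking that this perturbed even-order array remains latin and that the intended diagonal genuinely meets every symbol exactly once.
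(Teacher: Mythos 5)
Your proposal is correct in substance, but note that the paper does not actually prove this statement at all: it quotes it as a known theorem of D\'enes and P\'asztor \cite{denes1963some}, so there is no in-paper argument to match. What you supply is the standard self-contained proof, and it is sound: the $n=1$ and $n=2$ clauses reduce to \Cref{small n squares}, and your direct-product step (inflating a latin square of order $n$ that has a transversal by a latin square of order $a$) is exactly the two-dimensional ``inflation'' the paper alludes to right after Construction~\ref{inflation}, where it remarks that inflating a cube with a diagonal transversal by $a$ yields a $\LC(a^N)$; your latinity check and the row/column/symbol-disjointness of the $n$ order-$a$ subsquares are all correct. The one place where your write-up is a promissory note rather than a proof is even $n$: of your two offered fallbacks, the perturbation of the cyclic square is not carried out (and is the only genuinely delicate point, since the Cayley table of $\mathbb{Z}_n$ for even $n$, and indeed of any group of order $6$, has no transversal), but your first fallback --- citing the classical fact that for every order $n\neq 2$ some latin square of that order has a transversal, equivalently that idempotent latin squares exist for all $n\neq 2$ --- is a legitimate and standard citation that closes the gap. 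If you wanted full self-containment, a clean route for even $n\geq 4$ is prolongation: extend an idempotent latin square of the odd order $n-1$ by one row and column along a transversal, which yields an order-$n$ latin square whose adjoined diagonal cells together with the displaced entries form a transversal. So: correct modulo one classical citation, and in fact more informative than the paper, which delegates the whole theorem to the literature.
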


The following theorem completes the problem for all partitions with at most two distinct orders, considering only the cases which haven't already been covered in the previous theorems.
\begin{theorem}
\label{squaresatmost2}
    For $a>b>0$ and $n>4$, a $\LS(a^ub^{n-u})$ exists if and only if $u\geq 3$, or $0< u < 3$ and $a\leq (n-2)b$.
\end{theorem}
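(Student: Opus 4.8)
The plan is to prove the two directions separately, handling necessity by a direct counting argument and sufficiency by reducing to a small family of base constructions.

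For necessity only the cases $u\in\{1,2\}$ need attention, since when $u\ge 3$ the stated condition holds automatically. Put the realization in normal form with holes $H_1,\dots,H_n$ and symbol sets $S_1,\dots,S_n$, where $|S_i|=a$ for a large hole and $|S_i|=b$ for a small one, the $S_i$ partitioning $[N]$ with $N=ua+(n-u)b$. The key observation concerns the block $B$ cut out by the rows of $H_1$ and the columns of a second distinguished hole $H_2$: no symbol of $S_1$ can appear in $B$ (within the rows of $H_1$ the symbols $S_1$ are confined to the columns of $H_1$) and no symbol of $S_2$ can appear (within the columns of $H_2$ the symbols $S_2$ are confined to the rows of $H_2$), so $B$ is filled using only the $(n-2)b$ symbols of $S_3\cup\dots\cup S_n$. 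When $u=2$ the hole $H_2$ is also large, $B$ is $a\times a$, and each of its rows carries $a$ distinct symbols from a set of size $(n-2)b$; when $u=1$ the hole $H_2$ is small, $B$ is $a\times b$, and each column carries $a$ distinct symbols from that same set. In either case $a\le(n-2)b$, the asserted bound.

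For sufficiency I would first isolate a filling lemma: since each hole may be filled by any latin square on its symbol set, if $\LS(a^uc^1)$ exists with $c=(n-u)b$ and a realization $\LS(b^{\,n-u})$ exists, then installing the latter in the size-$c$ hole yields $\LS(a^ub^{n-u})$. By Theorem \ref{a^n square} the ingredient $\LS(b^{\,n-u})$ exists precisely when $n-u\ne 2$, so the exceptional value $n-u=2$ must be treated by keeping the two small holes separate throughout. More flexibly, the small holes need not be lumped together: one may refine a hole of any size $k\ge 3$ into $k$ unit holes, or distribute the small holes across several medium holes, which lets the recursion bottom out at the already-known Theorems \ref{small n squares} and \ref{a^n square}.

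The substantive work is supplying the base realizations, and this is where I expect the main obstacle. For $u\ge 3$ the three-or-more equal large holes provide enough freedom that no arithmetic restriction survives; I would build these by a frame-type construction, taking the off-diagonal part of $\LS(a^u)$ and installing the $b$-holes inside its latin off-diagonal region before filling. For $u\in\{1,2\}$ with $a\le(n-2)b$ the construction is tight, and the extreme case $a=(n-2)b$ carries no slack: here generic filling fails and I expect to need an explicit frame of type $a^ub^{n-u}$ from a difference or array construction, verifying directly that the forced off-diagonal blocks -- which by the necessity argument must exhaust exactly the $(n-2)b$ small symbols -- can in fact be completed. Settling these boundary cases, together with the anomalous $n-u=2$ ingredient, is the crux; the remaining parameter ranges then follow by inflation (scaling $a$ and $b$ by a common factor preserves both a construction and the inequality $a\le(n-2)b$) and by the filling lemma.
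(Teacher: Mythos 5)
First, a point of comparison: the paper does not prove this statement at all --- it is quoted as a known result, with the equal-parts case attributed to D\'enes and P\'asztor, the two-orders case to Heinrich, and its completion to Kuhl and Schroeder. So your attempt must be measured against the cited literature, and there the verdict is: your necessity direction is correct and complete (your block argument at the intersection of the rows of $H_1$ and the columns of $H_2$ is exactly the standard proof, and the resulting bound $a\leq (n-2)b$ for $u\in\{1,2\}$ is the specialization of \Cref{squarecondition1}), but your sufficiency direction is a plan rather than a proof, and the plan breaks down precisely at the points you flag as ``the crux.''

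Concretely, three steps fail or are missing. (1) Your filling lemma requires an $\LS(a^uc^1)$ with $c=(n-u)b$; but for $u=2$ this is a three-part partition, which by \Cref{small n squares} exists only when $c=a$, and for $u=1$ it is a two-part partition, which never exists --- so the reduction collapses exactly in the cases $u\in\{1,2\}$ that carry the bound. (2) Your sketch for $u\geq 3$ does not typecheck on orders: the off-diagonal region of an $\LS(a^u)$ lives in an array of order $ua$, while the target has order $ua+(n-u)b$, so there is nowhere to ``install'' the $b$-holes; one needs a construction built for type $a^ub^{n-u}$ from scratch, e.g.\ via outline rectangles and Hilton's amalgamation theorem (\Cref{outline rectangle to square}), as the paper itself does for the cube analogue in \Cref{ab^n-1}. (3) The closing claim that ``the remaining parameter ranges then follow by inflation'' is false as a completion device: inflation by $t$ sends $(a,b)$ to $(ta,tb)$, so from any finite stock of base realizations you reach only pairs sharing a common factor with a base pair, and the infinitely many primitive pairs with $b<a\leq(n-2)b$ --- including the tight boundary $a=(n-2)b$ and the $n-u=2$ anomaly --- each still require their own construction. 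Supplying those constructions is the actual content of the Heinrich and Kuhl--Schroeder papers, and it is absent here.
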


Colbourn \cite{colbourn2018latin} has determined two necessary, but not sufficient, conditions for the existence of a 2-realization.
\begin{theorem}
\label{squarecondition1}
    If a $\LS(h_1h_2\dots h_n)$ exists, then $h_1\leq\sum_{i=3}^nh_i$.
\end{theorem}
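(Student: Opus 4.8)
The plan is to count the occurrences of a single symbol from the second subsquare inside the rows of the first subsquare and to show that these occurrences are forced into a restricted set of columns. First I would assume, without loss of generality, that the realization is in normal form, so that the subsquare $H_i$ occupies a set of rows $R_i$, a set of columns $C_i$, and a set of symbols $S_i$, each of size $h_i$, where the $R_i$ partition the rows, the $C_i$ partition the columns (both because the $h_i$ sum to $N$), and the $S_i$ are pairwise disjoint. Any realization can be brought to this form by permuting rows and columns, which preserves both the latin property and the disjoint-subsquare structure, so no generality is lost.

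Next I would fix a symbol $t \in S_2$ and track where $t$ appears among the $h_1$ rows of $R_1$. Since $L$ is a latin square, $t$ occurs exactly once in each row, hence exactly $h_1$ times across the rows of $R_1$, and these occurrences lie in $h_1$ distinct columns, because two occurrences of $t$ in one column would violate the column constraint. The crux is to exclude the columns $C_1 \cup C_2$. The symbol $t$ cannot appear in any column of $C_1$ within the rows $R_1$: the block $R_1 \times C_1$ is the subsquare $H_1$, whose entries all lie in $S_1$, and $t \notin S_1$. The symbol $t$ also cannot appear in any column of $C_2$ within the rows $R_1$: the subsquare $H_2 = R_2 \times C_2$ already places $t$ once in each column of $C_2$, so the unique global occurrence of $t$ in each such column lies in $R_2$, and $R_1 \cap R_2 = \emptyset$ pushes it outside $R_1$.

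Consequently all $h_1$ occurrences of $t$ in the rows $R_1$ fall within the columns $\bigcup_{i \geq 3} C_i$, a set of size $\sum_{i=3}^n h_i$. As these occurrences sit in distinct columns, this forces $h_1 \leq \sum_{i=3}^n h_i$, which is the claimed bound. (As a quick sanity check, for $n = 2$ the right-hand side is an empty sum, giving $h_1 \leq 0$, which recovers the nonexistence in Theorem~\ref{small n squares}.)

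The main obstacle here is conceptual rather than computational: selecting the symbol-and-row combination so that exactly two column-blocks are excluded. Taking a symbol of $S_1$, or examining the wrong rows, excludes only one block and yields the weaker inequality $h_1 \leq \sum_{i=2}^n h_i$. The asymmetry that lets me discard both $C_1$, because it is $H_1$'s own symbol set, and $C_2$, because $t$ is already exhausted in $C_2$ by $H_2$, is precisely what sharpens the estimate to the stated form.
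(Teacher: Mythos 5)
Your proof is correct: in normal form the $h_1$ occurrences of a fixed symbol $t\in S_2$ within the rows of $H_1$ do lie in distinct columns, and both exclusions hold for the reasons you give ($R_1\times C_1$ carries only symbols of $S_1$, while each column of $C_2$ has its unique occurrence of $t$ already inside $H_2$), which forces $h_1\leq\sum_{i=3}^n h_i$. Note that the paper itself offers no proof to compare against---it quotes this theorem from Colbourn \cite{colbourn2018latin}---and your counting argument is precisely the standard proof of that result (implicitly assuming $n\geq 2$ so that $S_2$ is nonempty, which is the only regime where the statement is meant to apply).
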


\begin{theorem}
\label{squarecondition2}
    If a $\LS(h_1h_2\dots h_n)$ exists, then $$N^2 - \sum_{i=1}^nh_i^2\geq 3\Bigg( \sum_{i\in D}h_i\Bigg)\Bigg(\sum_{j\in\overline{D}}h_j\Bigg)$$ for any $D\subseteq\{1,2,\dots,n\}$ and $\overline{D} = \{1,2,\dots,n\}\setminus D$.
\end{theorem}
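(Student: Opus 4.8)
The plan is to count the cells of the latin square according to how their row, column and symbol are distributed across the partition $D \mid \overline{D}$, and to show that the ``mixed'' cells alone already account for exactly $3\bigl(\sum_{i\in D}h_i\bigr)\bigl(\sum_{j\in\overline{D}}h_j\bigr)$ of the cells lying outside the subsquares.

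First I would put the $2$-realization in normal form and record the induced partitions of the $N$ rows, the $N$ columns and the $N$ symbols into blocks indexed by $[n]$, where block $i$ has size $h_i$. The symbol blocks are well defined because the subsquares are symbol-disjoint and $\sum_i h_i = N$, so the three classifications are completely symmetric. Writing $d=\sum_{i\in D}h_i$, each of the three coordinate sets has exactly $d$ members whose block index lies in $D$ and $N-d$ whose block index lies in $\overline{D}$. Viewing each of the $N^2$ cells as a triple (row, column, symbol), I assign it a type in $\{D,\overline{D}\}^3$ according to the classes of its three coordinates, and let $n_{abc}$ denote the number of cells of each type.

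The key step is to evaluate these counts using only the latin property. Because a column and a symbol determine a unique cell, and likewise for the other two pairs of coordinates, the number of cells whose prescribed types on any two coordinates are fixed equals the product of the two class sizes. Fixing the pair (row, column) with both classes in $D$ shows $n_{DDD}+n_{DD\overline{D}}=d^2$, and the two analogous identities obtained from the other coordinate pairs force the three types with exactly one $\overline{D}$-coordinate to share a common value $p$; symmetrically the three types with exactly one $D$-coordinate share a common value $m$. Fixing a $D$-column together with an $\overline{D}$-symbol gives $n_{DD\overline{D}}+n_{\overline{D}D\overline{D}}=d(N-d)$, that is $p+m=d(N-d)$. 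Summing over the six mixed types, the number of cells whose type is not constant is therefore exactly $3(p+m)=3d(N-d)$.

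Finally I connect this to the subsquares. Every cell inside a subsquare $H_i$ has its row, column and symbol all in block $i$, hence all three coordinates share the same $D/\overline{D}$-class, so each of the $\sum_i h_i^2$ subsquare cells has constant type. Consequently every mixed cell lies outside the subsquares, and since the number of cells outside the subsquares is exactly $N^2-\sum_i h_i^2$, we obtain $N^2-\sum_i h_i^2 \ge 3d(N-d)$, which is the claimed inequality. The part requiring the most care is the bookkeeping in the third paragraph: the value $3d(N-d)$ is forced purely by the latin conditions, and the subsquare hypothesis enters only through the elementary observation that a subsquare cell is monochromatic in type. I would therefore double-check the marginal identities and the symmetry of the three coordinate classifications rather than look for any clever inequality, since the only slack in the bound is the nonnegative number of off-subsquare cells of constant type.
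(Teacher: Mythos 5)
Your proof is correct, and it is worth noting that the paper itself states this theorem without proof, citing Colbourn \cite{colbourn2018latin}; the natural internal comparison is therefore with the paper's proof of the three-dimensional analogue, \Cref{condition2}, and your route is genuinely different from that one. The paper's method fixes $D$, takes the lines through each subcube $H_i$ with $i\in D$, counts the occurrences of the amalgamated symbol set $S$ inside those lines, and compares this with the number of cells outside the lines and outside the remaining subcubes, so the inequality arises from a capacity comparison. You instead classify each of the $N^2$ cells by the $\{D,\overline{D}\}$-pattern of its (row, column, symbol) triple and show that the latin conditions alone force the number of mixed-pattern cells to be exactly $3d(N-d)$: your three marginal identities are all valid (the (row, column) one trivially, the (row, symbol) and (column, symbol) ones because each row and each column contains every symbol exactly once), the collapse to the two common values $p$ and $m$ follows since all three ``one-$\overline{D}$'' counts equal $d^2-n_{DDD}$ and symmetrically for $m$, and the relation $p+m=d(N-d)$ closes the computation; the subsquare hypothesis then enters only through the observation that a subsquare cell has constant pattern, which is legitimate because the subsquares are pairwise row-, column- and symbol-disjoint and $\sum_i h_i=N$, so every row, column and symbol lies in exactly one block (normal form is cosmetic here). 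What your approach buys is an exact identity rather than an estimate --- the slack in the bound is precisely the number of constant-pattern cells off the subsquares --- together with full symmetry in the three attributes. What the paper's line-counting buys is that it generalizes to dimension $3$: there the analogous pattern count over $\{D,\overline{D}\}^4$ no longer determines the mixed-cell total from $d$ alone (the patterns with two $D$- and two $\overline{D}$-attributes depend on how symbols sit inside the all-$D$ box), which is why \Cref{condition2} involves $\sum_{i\in D}h_i^2$ and not just $\sum_{i\in D}h_i$, and why the authors count along lines through the subcubes instead.
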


Fuchs' problem considers subgroups of quasigroups with binary operations which are equivalent to latin squares, and so this problem can be extended to quasigroups with ternary operations by looking at latin cubes. These are also known as permutation cubes.

\begin{definition}
    A \emph{latin cube} $C$ of order $N$ is an $N\times N\times N$ array on a symbol set of size $N$, with entries defined by $C(j_1,j_2,j_3)$, such that if $(j_1,j_2,j_3)$ and $(j'_1,j'_2,j'_3)$ differ in exactly one position, then $C(j_1,j_2,j_3) \neq C(j'_1,j'_2,j'_3)$.

    If $C$ is instead an $r\times s\times t$ array with $r,s,t\leq N$, then we call $C$ a \emph{latin box} of order $r$ by $s$ by $t$.

    We will consider a \emph{line} of $C$ to be a set of $N$ cells with two fixed coordinates. A \emph{column}, \emph{row} or \emph{file} is thus a line represented by $([N],j_2,j_3)$, $(j_1,[N],j_3)$ or $(j_1,j_2,[N])$ respectively. A \emph{layer} of $C$ is the set of cells with only one fixed coordinate. Layers can thus be represented as $(j_1,[N],[N])$, $([N],j_2,[N])$ or $([N],[N],j_3)$.
    
    Similar to latin squares, a \emph{subcube} of $C$ is an $m\times m\times m$ sub-array which is itself a latin cube of order $m$. Subcubes are disjoint if they share no row, column, file or symbol.

    As with latin squares, we will assume that all latin cubes use the symbols of $[N]$.
\end{definition}

\begin{definition}
    For an integer partition $P = (h_1,h_2,\dots,h_n)$ of $N$ with $h_1\geq h_2\geq \dots\geq h_n > 0$, a \emph{3-realization} of $P$, denoted $\LC(h_1h_2\dots h_n)$, is a latin cube of order $N$ with pairwise disjoint subcubes of order $h_i$ for each $i\in [n]$.

    As with 2-realizations, a 3-realization is in \emph{normal form} if the subcubes are along the main diagonal (cells of the form $(i,i,i)$ for all $i\in [N]$ and the additional cells needed to form the subsquares) and the $i^{th}$ subcube, $H_i$, is of order $h_i$.
\end{definition}

\begin{example}
\label{2^21^1}
The arrays in \autoref{fig:2,2,1 cube} are the layers of a $\LC(2^21^1)$. The subcubes are highlighted.
    \begin{figure}[h]
    \centering
    $\arraycolsep=4pt\begin{array}{|c|c|c|c|c|} \hline
    \cellcolor{lightgray}1 & \cellcolor{lightgray}2 & 3 & 5 & 4\\ \hline
    \cellcolor{lightgray}2 & \cellcolor{lightgray}1 & 4 & 3 & 5\\ \hline
    3 & 4 & 5 & 1 & 2\\ \hline
    4 & 5 & 1 & 2 & 3\\ \hline
    5 & 3 & 2 & 4 & 1\\ \hline \end{array}$
    \quad
    $\arraycolsep=4pt\begin{array}{|c|c|c|c|c|} \hline
    \cellcolor{lightgray}2 & \cellcolor{lightgray}1 & 4 & 3 & 5\\ \hline
    \cellcolor{lightgray}1 & \cellcolor{lightgray}2 & 5 & 4 & 3\\ \hline
    5 & 3 & 1 & 2 & 4\\ \hline
    3 & 4 & 2 & 5 & 1\\ \hline
    4 & 5 & 3 & 1 & 2\\ \hline \end{array}$
    \quad
    $\arraycolsep=4pt\begin{array}{|c|c|c|c|c|} \hline
    4 & 3 & 5 & 2 & 1\\ \hline
    3 & 5 & 2 & 1 & 4\\ \hline
    1 & 2 & \cellcolor{lightgray}3 & \cellcolor{lightgray}4 & 5\\ \hline
    5 & 1 & \cellcolor{lightgray}4 & \cellcolor{lightgray}3 & 2\\ \hline
    2 & 4 & 1 & 5 & 3\\ \hline \end{array}$
    \quad
    $\arraycolsep=4pt\begin{array}{|c|c|c|c|c|} \hline
    5 & 4 & 2 & 1 & 3\\ \hline
    4 & 3 & 1 & 5 & 2\\ \hline
    2 & 5 & \cellcolor{lightgray}4 & \cellcolor{lightgray}3 & 1\\ \hline
    1 & 2 & \cellcolor{lightgray}3 & \cellcolor{lightgray}4 & 5\\ \hline
    3 & 1 & 5 & 2 & 4\\ \hline \end{array}$
    \quad
    $\arraycolsep=4pt\begin{array}{|c|c|c|c|c|} \hline
    3 & 5 & 1 & 4 & 2\\ \hline
    5 & 4 & 3 & 2 & 1\\ \hline
    4 & 1 & 2 & 5 & 3\\ \hline
    2 & 3 & 5 & 1 & 4\\ \hline
    1 & 2 & 4 & 3 & \cellcolor{lightgray}5\\ \hline \end{array}$
    \caption{}
    \label{fig:2,2,1 cube}
    \end{figure}
    
\end{example}

The problem of existence of a $\LC(h_1h_2\dots h_n)$ is different to that of a $\LS(h_1h_2\dots h_n)$. To see this, consider the latin cube in \autoref{fig:1,1 cube}. This latin cube is a 3-realization of the partition $P = (1,1)$, so a $\LC(1^2)$. From \Cref{small n squares}, we know that a $\LS(1^2)$ does not exist. Thus, there are partitions which can be realized in latin cubes but not in latin squares.

\begin{figure}[h]
        \centering

        \begin{tikzpicture}
            \filldraw[color = lightgray, fill=lightgray] (0,0) rectangle (0.75,-0.75);
            \draw (0,0) -- (1.5,0);
            \draw (0,0) -- (0,-1.5);
            \draw (1.5,0) -- (1.5,-1.5);
            \draw (0,-1.5) -- (1.5,-1.5);
            \draw (0,-0.75) -- (1.5,-0.75);
            \draw (0.75,0) -- (0.75,-1.5);
            \node at (0.375,-0.375) {1};
            \node at (1.125,-0.375) {2};
            \node at (0.375,-1.125) {2};
            \node at (1.125,-1.125) {1};
        \end{tikzpicture}
        \quad
        \begin{tikzpicture}
            \filldraw[color = lightgray, fill=lightgray] (0.75,-0.75) rectangle (1.5,-1.5);
            \draw (0,0) -- (1.5,0);
            \draw (0,0) -- (0,-1.5);
            \draw (1.5,0) -- (1.5,-1.5);
            \draw (0,-1.5) -- (1.5,-1.5);
            \draw (0,-0.75) -- (1.5,-0.75);
            \draw (0.75,0) -- (0.75,-1.5);
            \node at (0.375,-0.375) {2};
            \node at (1.125,-0.375) {1};
            \node at (0.375,-1.125) {1};
            \node at (1.125,-1.125) {2};
        \end{tikzpicture}
        \caption{}
        \label{fig:1,1 cube}
    \end{figure}

\section{Constructions}

We now give some construction methods for latin cubes which will be used later to find 3-realizations. Construction \ref{construction} is quite general, but this allows it to be used to construct more specific latin cubes in later sections. Construction \ref{inflation} is a generalisation of a construction method used for latin squares.

\begin{construction}
\label{construction}
    Given any order $N$ latin square $L$, we can construct an order $N$ latin cube $C$ as follows.
    
    Let the entries of $L$ be determined by $L(r,c)$. Construct $C$ as $C(r,c,l) = L(L(r,l),c)$. Observe that if $C(r,c,l) = C(r',c,l)$ then $L(L(r,l),c) = L(L(r',l),c)$, and since $L$ is a latin square, $L(r,l) = L(r',l)$. Again, this implies that $r=r'$. If $C(r,c,l) = C(r,c',l)$, then $L(L(r,l),c) = L(L(r,l),c')$, and since $L$ is a latin square, $c = c'$. Also, if $C(r,c,l) = C(r,c,l')$, then $L(L(r,l),c) = L(L(r,l'),c)$. As before, this implies that $L(r,l) = L(r,l')$ and so $l = l'$.
    
    Thus, $C$ is a latin cube.
\end{construction}

\begin{construction}
\label{inflation}
    Given any latin cube, $C$, of order $N$, we can construct a latin cube of order $tN$, for any $t\in\mathbb{Z}^+$.
    
    To do this, take a latin cube $T$ of order $t$ on the symbol set $\{1,2,\dots,t\}$ and an empty $tN\times tN$ array $C'$. For every entry $C(i,j,k)$ of $C$, place a copy of $T$ with the symbols $(C(i,j,k)-1)t + [t]$ across the cells $C'(i',j',k')$ of $C'$ where $i'\in (i-1)t + [t]$, $j'\in (j-1)t + [t]$ and $k'\in (k-1)t + [t]$.

    In this way, every entry of $C$ corresponds to a copy of $T$ in $C'$ with a set of symbols determined by the symbol in $C$.
    
    This is the analogue of a construction method for latin squares known as \emph{inflation}.
\end{construction}

This method of construction gives some immediate results for 3-realizations. Similarly to what can be done for latin squares, if a latin cube has a transversal of $N$ symbols on the main diagonal, then an inflation of this by $a$ will be a $\LC(a^N)$, since the subsquares correspond to copies of $T$ with distinct symbols. Also, if a $\LC(h_1\dots h_n)$ exists, then an inflation of this is a $\LC((ah_1)^1\dots (ah_n)^1)$.

\emph{Prolongation} is another method which has been used to find $2-$realizations. The article \cite{heinrich1982prolongation} defines a generalisation of this method for latin cubes but it is unknown whether it can be used for latin cubes of any order.

\subsection{Outline Rectangles and Boxes}

\begin{definition}
    Given partitions $P,Q,R$ of $N$, where $P = (p_1,\dots,p_u)$, $Q = (q_1,\dots,q_v)$, $R = (r_1,\dots,r_t)$, let $O$ be a $u\times v$ array of multisets, with elements from $[t]$. For $i\in [u]$ and $j\in[v]$, let $O(i,j)$ be the multiset of symbols in cell $(i,j)$ and let $|O(i,j)|$ be the number of symbols in the cell, including repetition.

    Then $O$ is an \emph{outline rectangle} associated to $(P,Q,R)$ if
    \begin{enumerate}
        \item $|O(i,j)| = p_iq_j$, for all $(i,j)\in[u]\times[v]$;
        \item symbol $l\in[t]$ occurs $p_ir_l$ times in the row $(i,[v])$;
        \item symbol $l\in[t]$ occurs $q_jr_l$ times in the column $([u],j)$.
    \end{enumerate}
\end{definition}

\begin{example}
For partitions $P=(4^11^2)$, $Q=(3^12^11^1)$ and $R=(2^3)$, the array in \autoref{fig:outline square} is an outline rectangle associated to $(P,Q,R)$.
    \begin{figure}[h]
        \centering
        $\begin{array}{|ccc|cc|c|} \hline
        1 & 1 & 1 & 2 & 2 & 1 \\
        1 & 1 & 1 & 2 & 2 & 1 \\
        2 & 2 & 3 & 3 & 3 & 2 \\
        3 & 3 & 3 & 3 & 3 & 2 \\ \hline
        2 & 2 & 3 & 1 & 1 & 3 \\ \hline
        2 & 2 & 3 & 1 & 1 & 3 \\ \hline
        \end{array}$
        \caption{}
        \label{fig:outline square}
    \end{figure}
\end{example}

\begin{definition}
    Given partitions $P,Q,R$ of $N$, where $P = (p_1,\dots,p_u)$, $Q = (q_1,\dots,q_v)$ and $R = (r_1,\dots,r_w)$, and a latin square $L$ of order $N$, the \emph{reduction modulo $(P,Q,R)$} of $L$, denoted $O$, is the $u\times v$ array of multisets
    obtained by amalgamating rows $(p_1 + \dots + p_{i-1}) + [p_i]$ for all $i\in[u]$, columns $(q_1 + \dots + q_{j-1}) + [q_j]$ for all $j\in[v]$, and symbols $(r_1 + \dots + r_{k-1}) + [r_k]$ for all $k\in[w]$.

    When amalgamating symbols, for $k\in[w]$ we will map all symbols in $(r_1 + \dots + r_{k-1}) + [r_k]$ to symbol $k$.
\end{definition}

\begin{example}
The second array in \autoref{fig:reduction} is a reduction modulo $(P,Q,R)$ of the latin square in Example \ref{order5square} (the left array) with $P=(2^21^1)$, $Q=(4^11^1)$ and $R=(2^11^3)$. In this reduction of $L$, symbols 1 and 2 have been amalgamated into symbol 1, and for the remaining symbols, symbol $i$ became $i-1$.
    \begin{figure}[h]
        \centering
        $\arraycolsep=4pt\begin{array}{|c|c|c|c|c|} \hhline{|*{5}{-|}}
    \cellcolor{lightgray}1 & \cellcolor{lightgray}2 & 4 & 5 & 3\\ \hhline{|*{5}{-|}}
    \cellcolor{lightgray}2 & \cellcolor{lightgray}1 & 5 & 3 & 4\\ \hhline{|*{5}{-|}}
    4 & 5 & \cellcolor{lightgray}3 & 2 & 1\\ \hhline{|*{5}{-|}}
    5 & 3 & 1 & \cellcolor{lightgray}4 & 2\\ \hhline{|*{5}{-|}}
    3 & 4 & 2 & 1 & \cellcolor{lightgray}5\\ \hhline{|*{5}{-|}}\end{array}$\quad \quad
        $\arraycolsep=4pt\begin{array}{|cccc|c|} \hline
        1 & 1 & 1 & 1 & 2 \\
        2 & 3 & 4 & 4 & 3 \\ \hline
        1 & 1 & 2 & 2 & 1 \\
        3 & 3 & 4 & 4 & 1 \\ \hline
        1 & 1 & 2 & 3 & 4 \\ \hline
        \end{array}$
        \caption{}
        \label{fig:reduction}
    \end{figure}
\end{example}

The following theorem, and its proof, can be found in Hilton \cite{hilton1980reconstruction}.

\begin{theorem}
\label{outline rectangle to square}
    For every outline rectangle $O$, there is a latin square $L$ of order $N$ and partitions $P$, $Q$ and $R$ such that $O$ is the reduction modulo $(P, Q, R)$ of $L$.
\end{theorem}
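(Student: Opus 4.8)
The plan is to reinterpret the outline rectangle as the frequency data of an amalgamated latin square and then to reverse the amalgamation by detaching one coordinate direction at a time. I would encode $O$ as the nonnegative integer array $f(i,j,l)$, $i\in[u]$, $j\in[v]$, $l\in[t]$, where $f(i,j,l)$ is the multiplicity of $l$ in $O(i,j)$. Conditions (1)--(3) become the marginal identities $\sum_l f(i,j,l)=p_iq_j$, $\sum_j f(i,j,l)=p_ir_l$ and $\sum_i f(i,j,l)=q_jr_l$. The target latin square $L$ is recovered once every part of $P$, $Q$ and $R$ has been split into singletons, so I would build $L$ in three stages: first refine the symbol partition $R$ to $(1^N)$, then the row partition $P$, then the column partition $Q$, checking at each stage that the intermediate object is again an outline rectangle for the partially refined triple, so that the next stage applies.

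Each elementary step is an equitable edge-colouring of a bipartite multigraph, and the key observation is that the marginal conditions force the relevant vertex degrees to be exactly divisible, so the colouring is perfectly balanced rather than merely balanced to within one. For the symbol stage, fix a class $l$ and form the bipartite multigraph $G_l$ on the row-groups $[u]$ and column-groups $[v]$ with $f(i,j,l)$ parallel edges joining $i$ and $j$; its degrees are $p_ir_l$ at $i$ and $q_jr_l$ at $j$. By de Werra's equitable edge-colouring theorem the edges of $G_l$ can be coloured with $r_l$ colours so that every vertex meets each colour equally often; divisibility makes this exactly $p_i$ (respectively $q_j$) times, which is precisely what is needed for each of the $r_l$ new symbols to occur $p_i$ times in row-group $i$ and $q_j$ times in column-group $j$. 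Doing this for every $l$ yields an outline rectangle for $(P,Q,(1^N))$ with genuine symbols. The row stage is analogous: for each row-group $i$ I would colour the bipartite multigraph on symbols $[N]$ and column-groups $[v]$ (degrees $p_i$ at each symbol and $p_iq_j$ at column-group $j$) with $p_i$ colours, one per genuine row; balance forces each genuine row to receive each symbol exactly once and exactly $q_j$ cells in column-group $j$. The column stage is the mirror image. After all three stages every coordinate is a singleton, and the resulting array is by construction a latin square whose reduction modulo $(P,Q,R)$ is $O$.

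The main thing to get right is not any single colouring but the bookkeeping that glues the three stages together. I must verify that splitting one coordinate direction leaves the marginal conditions in the other two directions intact---which it does, because a detachment only redistributes occurrences among the newly created rows, columns or symbols within a single block and never changes totals over a whole block---and that each partially refined array genuinely satisfies the outline axioms, so that de Werra's theorem can be invoked at the next stage. A secondary point worth stating carefully is the appeal to de Werra itself: I would record the precise ``balanced to within one'' guarantee and then note that divisibility of each degree by the number of colours upgrades it to exact equality, since this upgrade is the only reason the detached object is forced to be latin rather than merely approximately balanced.
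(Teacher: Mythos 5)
Your proposal is correct and is essentially the argument the paper defers to (Hilton's reconstruction proof): one detaches the amalgamated symbols, rows and columns in stages, invoking de Werra's balanced edge-colouring theorem for bipartite multigraphs, where the divisibility of each vertex degree ($p_ir_l$, $q_jr_l$, $p_iq_j$) by the number of colours upgrades ``balanced within one'' to exact equality, forcing the detached object to satisfy the outline conditions for the refined triple. The bookkeeping you flag---that detaching one direction preserves the marginal identities in the other two---is exactly the inductive invariant in Hilton's proof, so there is no gap.
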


This result provides a construction method which has been used to prove the existence of many 2-realizations. To construct a latin square $L$, we do not need to find $L$ itself. It is sufficient, by \Cref{outline rectangle to square}, to find an outline rectangle $O$ which is a reduction of $L$. For a realization of partition $P = (h_1,\dots,h_n)$, a reduction modulo $(P,P,P)$ with each cell $(i,i)$ on the main diagonal filled with $h_i^2$ copies of symbol $i$ will force the resulting latin square $L$ to have the required subsquares.

Given the usefulness of outline rectangles in finding 2-realizations, we will define the analogous concept for latin cubes.

\begin{definition}
    Given partitions $P,Q,R,S$ of $N$, where $P = (p_1,\dots,p_u)$, $Q = (q_1,\dots,q_v)$, $R = (r_1,\dots,r_w)$, $S = (s_1,\dots,s_t)$, let $O$ be a $u\times v\times w$ array of multisets, with elements from $[t]$. For $i\in [u]$, $j\in[v]$ and $k\in[w]$, let $O(i,j,k)$ be the multiset of symbols in cell $(i,j,k)$ and let $|O(i,j,k)|$ be the number of symbols in the cell, including repetition.

    Then $O$ is an \emph{outline box} associated to $(P,Q,R,S)$ if
    \begin{enumerate}
        \item symbol $l\in[t]$ occurs $N^2s_l$ times in $O$;
        \item $|O(i,j,k)| = p_iq_jr_k$, for all $(i,j,k)\in[u]\times[v]\times[w]$;
        \item symbol $l\in[t]$ occurs $p_iq_js_l$ times in the line $(i,j,[w])$;
        \item symbol $l\in[t]$ occurs $p_ir_ks_l$ times in the line $(i,[v],k)$;
        \item symbol $l\in[t]$ occurs $q_jr_ks_l$ times in the line $([u],j,k)$.
    \end{enumerate}
\end{definition}

\begin{definition}
    For partitions $P,Q,R,S$ of $N$, where $P = (p_1,\dots,p_u)$, $Q = (q_1,\dots,q_v)$ $R = (r_1,\dots,r_w)$ and $S = (s_1,\dots,s_t)$, and a latin cube $C$ of order $N$, the \emph{reduction modulo $(P,Q,R,S)$} of $C$, denoted $O$, is the $u\times v\times w$ array of multisets
    obtained by amalgamating layers $(j_1,[N],[N])$, $([N],j_2,[N])$ and $([N],[N],j_3)$ for all $j_1\in(p_1 + \dots + p_{i-1}) + [p_i]$ for all $i\in[u]$, $j_2\in(q_1 + \dots + q_{j-1}) + [q_j]$ for all $j\in[v]$ and $j_3\in(r_1 + \dots + r_{k-1}) + [r_k]$ for all $k\in[w]$, and symbols $(s_1 + \dots + s_{l-1}) + [s_l]$ for all $l\in[t]$.
\end{definition}

Unlike with outline rectangles, these outline boxes are not always the reduction of a latin cube. Kochol \cite{kochol1989latin} has shown that not all $k\times n\times n$ latin boxes can be extended to a latin cube and we will use this result in the following proof.

\begin{theorem}
    Not every outline box $O$ associated to $(P,Q,R,S)$ is the reduction of a latin cube $C$.
\end{theorem}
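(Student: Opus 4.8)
The plan is to exhibit a single explicit outline box that cannot arise as a reduction, by encoding inside it a latin box of the type Kochol \cite{kochol1989latin} proved to be non-extendable. Fix integers $1\leq k<n$ and a $k\times n\times n$ latin box $B$ that admits no extension to a latin cube of order $n$; such a $B$ exists by Kochol's result. I would set $N=n$ and take the partitions $Q=R=S=(1^n)$ together with $P=((n-k)^1\,1^k)$, whose only nontrivial part has size $n-k$, so that $u=k+1$ and $v=w=t=n$. The idea is that the singleton parts of $P$ force individual layers to survive the reduction unchanged, while the part of size $n-k$ absorbs all the freedom.

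Next I would write down the candidate outline box $O$ associated to $(P,Q,R,S)$ explicitly. The $k$ rows of $P$ coming from the singleton parts are made to carry $B$ cell by cell: each such cell is the singleton $\{B(i,j,l)\}$. The single row coming from the part of size $n-k$ is filled, in position $(j,l)$, with the set $[n]\setminus\{B(1,j,l),\dots,B(k,j,l)\}$ of the $n-k$ symbols missing from the first-coordinate line of $B$ through $(j,l)$; this is well defined because that line of $B$ has $k$ distinct entries.

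The verification that $O$ is a genuine outline box is then a routine count. Conditions 2--4 reduce, on the singleton rows, to the latin property of $B$ along the three axis directions (each full line of $B$ is a permutation of $[n]$), and on the amalgamated row to the observation that a fixed symbol is missing from exactly $n-k$ of the relevant lines and hence appears with the prescribed multiplicity. Condition 5, for lines in the $P$-direction, says each symbol occurs once per line $([u],j,l)$, which is exactly the statement that the missing-symbol cell completes the $k$ distinct entries of $B$ to all of $[n]$. Condition 1 follows by summing: a fixed symbol occurs $kn$ times among the $k$ full latin layers of $B$ and $n(n-k)$ times in the amalgamated row, totalling $n^2=N^2 s_l$. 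I would then derive the contradiction: if $O$ were the reduction modulo $(P,Q,R,S)$ of a latin cube $C$ of order $n$, then since $Q$, $R$ and $S$ are all ones no columns, files or symbols are amalgamated, and since $k$ parts of $P$ are singletons the corresponding layers of $C$ survive untouched, so $C(i,j,l)=B(i,j,l)$ on those layers. Hence $C$ would extend $B$ to a latin cube, contradicting the choice of $B$, and $O$ is an outline box that is not the reduction of any latin cube.

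The step I expect to be the main (if modest) obstacle is pinning down the amalgamated row so that all five outline-box identities hold simultaneously; the delicate point is Condition 1, where the counts over the full layers of $B$ and over the complementary sets must combine to exactly $N^2$. Everything else is forced by the latin property of $B$ together with the triviality of $Q,R,S$, so the real crux is the translation ``the reduction of $C$ equals $O$'' $\Rightarrow$ ``$C$ extends $B$'', which is precisely where Kochol's non-extendability is invoked.
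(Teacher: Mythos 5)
Your proposal is correct and follows essentially the same route as the paper: both take $Q=R=S=(1^N)$ and $P=(N-k,1^k)$ so that the singleton layers of the reduction carry a $k\times n\times n$ latin box, and both invoke Kochol's non-extendable latin box to contradict the existence of a latin cube $C$ reducing to $O$. Your version is in fact slightly more complete than the paper's, since you explicitly construct the amalgamated layer from the missing symbols of each line and verify the five outline-box conditions, a step the paper leaves implicit.
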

\begin{proof}
    Suppose that $O$ is the reduction of a latin cube $C$ modulo $(P,Q,R,S)$.

    Consider the outline box where $Q = R = S = (1^N)$ and $P = (N-k,1^k)$. Then $O$ is obtained by amalgamating $N-k$ layers of $C$. Thus, $O$ has $k$ layers which form a $k\times N\times N$ latin box. Since $C$ only has those layers amalgamated, $C$ can be obtained by completing this latin box to a full latin cube. However, from above, this is not always possible.
    
    Thus, there are outline boxes which are not the reduction of a latin cube.
\end{proof}

\begin{example}
    The outline box associated to $(P,Q,R,S)$ given in \autoref{fig:outline box not work}, with $P = R = S = (1^4)$ and $Q = (2^2)$ and each array representing a layer, is not the reduction of a latin cube.
    \begin{figure}[h]
        \centering
    $\arraycolsep=4pt\begin{array}{|cc|cc|} \hline
    3 & 4 & 1 & 2\\ \hline
    3 & 4 & 1 & 2\\ \hline
    1 & 2 & 3 & 4\\ \hline
    1 & 2 & 3 & 4\\ \hline \end{array}$
    \quad
    $\arraycolsep=4pt\begin{array}{|cc|cc|} \hline
    2 & 4 & 1 & 3\\ \hline
    2 & 3 & 1 & 4\\ \hline
    1 & 3 & 2 & 4\\ \hline
    1 & 4 & 2 & 3\\ \hline \end{array}$
    \quad
    $\arraycolsep=4pt\begin{array}{|cc|cc|} \hline
    1 & 3 & 2 & 4\\ \hline
    1 & 2 & 3 & 4\\ \hline
    2 & 4 & 1 & 3\\ \hline
    3 & 4 & 1 & 2\\ \hline \end{array}$
    \quad
    $\arraycolsep=4pt\begin{array}{|cc|cc|} \hline
    1 & 2 & 3 & 4\\ \hline
    1 & 4 & 2 & 3\\ \hline
    3 & 4 & 1 & 2\\ \hline
    2 & 3 & 1 & 4\\ \hline \end{array}$
    \caption{}
    \label{fig:outline box not work}
    \end{figure}
\end{example}

\section{Some conditions on 3-realizations}

In this section, we give some conditions on the existence of a $\LC(h_1h_2\dots h_n)$. We begin with a theorem that shows an important relationship between 2-realizations and 3-realizations.

\begin{theorem}
\label{CubefromSquare}
    If there exists a $\LS(h_1\dots h_n)$, then there exists a $\LC(h_1\dots h_n)$.
\end{theorem}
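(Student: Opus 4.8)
The plan is to apply Construction \ref{construction} directly to a normal-form 2-realization and verify that the diagonal subcubes it produces are exactly what is required. First I would take a $\LS(h_1\dots h_n)$ in normal form, relabelling symbols if necessary so that the $i$-th subsquare $H_i$ occupies rows, columns and symbols in the block $B_i := (h_1 + \dots + h_{i-1}) + [h_i]$. This relabelling is legitimate because the $n$ subsquares are symbol-disjoint and together use all $N = \sum_i h_i$ symbols, so they partition $[N]$ and can be matched block-by-block. Let $C(r,c,l) = L(L(r,l),c)$ be the latin cube given by Construction \ref{construction}.

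The central observation is that $C$ restricted to the block $B_i \times B_i \times B_i$ depends only on the subsquare $H_i$. Indeed, for $r, l \in B_i$ we have $L(r,l) \in B_i$ because $H_i$ is a subsquare on rows and columns $B_i$ with symbol set $B_i$; and then for $c \in B_i$ we get $L(L(r,l), c) \in B_i$ for the same reason. Hence every entry $C(r,c,l)$ with $r,c,l \in B_i$ lies in $B_i$, and the restriction is precisely the cube obtained by applying Construction \ref{construction} to the order-$h_i$ latin square $H_i$. By Construction \ref{construction} this restriction is itself a latin cube of order $h_i$, so it is a subcube $K_i$ of $C$.

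It remains to check that the subcubes $K_1, \dots, K_n$ are pairwise disjoint. Each $K_i$ meets only those columns $([N], j_2, j_3)$, rows $(j_1, [N], j_3)$ and files $(j_1, j_2, [N])$ whose fixed coordinates lie in $B_i$, and uses only symbols in $B_i$. Since the blocks $B_1, \dots, B_n$ are pairwise disjoint, for $i \ne j$ the subcubes $K_i$ and $K_j$ share no row, column, file or symbol, so they are disjoint. This produces a $\LC(h_1 \dots h_n)$, as required.

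I do not expect a serious obstacle: the only point needing care is the bookkeeping of normal form, namely pinning down that $H_i$ is closed in the sense that $L$ maps $B_i \times B_i$ into $B_i$, and checking that both nested applications of $L$ in the definition of $C$ stay inside $B_i$. Once that closure is established, the latin-cube property of each $K_i$ is inherited for free from Construction \ref{construction}, and disjointness is immediate from the disjointness of the index blocks.
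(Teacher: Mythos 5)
Your proposal is correct and takes essentially the same route as the paper: both put $L$ in normal form with $H_i$ occupying the block $S_i = (h_1+\dots+h_{i-1})+[h_i]$ in rows, columns and symbols, apply Construction \ref{construction} to get $C(r,c,l) = L(L(r,l),c)$, and verify the two-step closure $L(r,l)\in S_i$ and $L(L(r,l),c)\in S_i$ for $r,c,l\in S_i$. Your extra remark that the restriction to $S_i\times S_i\times S_i$ is precisely Construction \ref{construction} applied to $H_i$ is a harmless refinement of the paper's (equally valid) observation that an $h_i\times h_i\times h_i$ block of a latin cube filled with only $h_i$ symbols is automatically a subcube.
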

\begin{proof}
    Suppose that $L$ is a $\LS(h_1\dots h_n)$. First, we assume that $L$ is in normal form and that the entries of subsquare $H_i$, corresponding to $h_i$, are in $S_i = (h_1+\dots +h_{i-1}) + [h_i]$. Thus, for all $(r,c)\in S_i\times S_i$, we have $L(r,c)\in S_i$. If this is not the case, then the rows, columns and symbols can be permuted to satisfy this condition.

    We construct a latin cube $C$ using the method given in Construction \ref{construction}.

    We now check that the subcubes are maintained in $C$. For any part $h_i$, if $(r,c,l)\in S_i\times S_i\times S_i$ then $L(r,l) = a\in S_i$ and $L(a,c)\in S_i$ by our earlier assumption. Thus, $C(r,c,l) = L(L(r,l),c)\in S_i$. Therefore, the cubes formed by the columns, rows and layers in $S_i$ are filled with only the elements of $S_i$. So $C$ has the required subcubes.
\end{proof}

This theorem means that we can use all results on the existence of $\LS(h_1h_2\dots h_n)$ when solving the problem in three dimensions.

We now provide analogues to the two necessary conditions given for $\LS(h_1h_2\dots h_n)$.

The first condition is similar to that of \Cref{squarecondition1}, by placing a bound on the size of the largest subcube.

\begin{theorem}
\label{condition1}
    If a $\LC(h_1\dots h_n)$ exists and $n\geq 2$, then $h_i\leq \frac{N}{2}$ for all $1\leq i\leq n$.
\end{theorem}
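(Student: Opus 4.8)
The plan is to mimic the classical fact that a proper subsquare of a latin square of order $N$ has order at most $N/2$, and to transfer that counting argument to one of the three axis directions of the cube. Throughout, I would fix an index $i$ and record the coordinate data of the subcube $H_i$: after permuting coordinates if necessary, it occupies first coordinates in a set $A$, second coordinates in $B$, third coordinates in $D$, and symbols in $\Sigma$, with $|A|=|B|=|D|=|\Sigma|=h_i$. Since the parts sum to $N$ and $n\geq 2$ with all parts positive, $h_i\leq N-1<N$, so $\overline{D}=[N]\setminus D$ is nonempty; this is the only consequence of $n\geq 2$ that the argument needs.

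The key step is a symbol-exhaustion claim: for every $a\in A$, $b\in B$ and every $d'\in\overline{D}$, the entry $C(a,b,d')$ lies in $\overline{\Sigma}=[N]\setminus\Sigma$. To see this, note that $\{(a,b,d):d\in D\}$ is a line of the order-$h_i$ latin subcube $H_i$, so the entries $\{C(a,b,d):d\in D\}$ already use up all of $\Sigma$. Since the full file $(a,b,[N])$ is a line of $C$ and hence a permutation of $[N]$, the remaining entries $\{C(a,b,d'):d'\in\overline{D}\}$ must avoid $\Sigma$ entirely.

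Finally I would count along a single column. Fix any $b\in B$ and any $d'\in\overline{D}$, and consider the cells $C(a,b,d')$ for $a\in A$. These lie in the column $([N],b,d')$, so they are pairwise distinct, and by the claim they all belong to $\overline{\Sigma}$. Hence $h_i=|A|$ distinct symbols are drawn from the $N-h_i$ symbols of $\overline{\Sigma}$, forcing $h_i\leq N-h_i$, that is, $h_i\leq N/2$. Since $i$ was arbitrary, this yields the bound for every part and completes the proof.

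I expect the only delicate point to be the bookkeeping of which axis plays the role of ``rows'' versus ``columns'': the argument exhausts $\Sigma$ along files (varying the third coordinate) and then reads off distinctness along columns (varying the first coordinate), so one must check that these are genuinely different lines of $C$ in order for the distinctness conclusion to be legitimate. Beyond that, the proof is a short transversal-style count, and no appeal to \Cref{CubefromSquare} or the outline-box machinery is required.
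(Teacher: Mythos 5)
Your proof is correct and is essentially the paper's own argument: both exhaust the $h_i$ symbols of the subcube along lines passing through it and then apply a pigeonhole count along a transverse line to force $h_i \le N-h_i$, using $n\ge 2$ only to guarantee the subcube is proper. The paper carries this out inside a single layer meeting the subcube (the columns of the embedded order-$h_i$ subsquare exhaust its symbol set, so each row of the complementary $(N-h_i)\times(N-h_i)$ block must contain all $h_i$ symbols), whereas you exhaust along files and count distinct non-$\Sigma$ entries down a partial column of a layer disjoint from the subcube --- an immaterial change of orientation and of which side of the complementary count is taken.
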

\begin{proof}
    Suppose that there is a latin cube $\LC(h_1\dots h_n)$. Rearrange the layers of the cube so that the subcube of order $h_i$ is in the cells $(i,j,k)$ with $i,j,k\in [h_i]$, making it the first subcube on the main diagonal. Let this subcube be $H_i$. Then the cells $(i,j,1)$ form the latin square shown in \Cref{table}, where the subarray $A_1$ is a $h_i\times h_i$ array and $A_4$ is an $(N-h_i)\times (N-h_i)$ array.
    \begin{figure}[h]
        \centering

        \begin{tikzpicture}
            \draw (0,0) -- (1.5,0);
            \draw (0,0) -- (0,-1.5);
            \draw (1.5,0) -- (1.5,-1.5);
            \draw (0,-1.5) -- (1.5,-1.5);
            \draw (0,-0.75) -- (1.5,-0.75);
            \draw (0.75,0) -- (0.75,-1.5);
            \node at (0.375,-0.375) {$A_1$};
            \node at (1.125,-0.375) {$A_2$};
            \node at (0.375,-1.125) {$A_3$};
            \node at (1.125,-1.125) {$A_4$};
        \end{tikzpicture}

        \caption{}
        \label{table}
    \end{figure}
    
    Observe that the cells of $A_1$ are within the subcube $H_i$ of order $h_i$. Thus, the symbols of $H_i$ cannot appear in $A_3$. These symbols need to appear in every row and column, so each row of the subarray $A_4$ must contain all $h_i$ symbols in $H_i$. This gives that $h_i \leq N-h_i$.
\end{proof}

The next condition is similar to \Cref{squarecondition2}.

\begin{theorem}
\label{condition2}
    If a $\LC(h_1\dots h_n)$ exists then $N^3 - \sum_{i=1}^n h_i^3 \geq N^2\sum_{i\in D} h_i + 3(\sum_{i\in D} h_i^2)(N-\sum_{i\in D} h_i) - \sum_{i\in D} h_i^3$ for any subset $D$ of $\{1,\dots,n\}$.
\end{theorem}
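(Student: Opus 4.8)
The plan is to assume a $\LC(h_1\dots h_n)$ exists and, after putting it in normal form, to count cells that are forced to lie in no subcube. Write $d=\sum_{i\in D}h_i$ and, for each $i$, let $R_i,C_i,F_i\subseteq[N]$ be the three coordinate blocks occupied by the subcube $H_i$ and let $M_i$ be its symbol set; since $\sum_i h_i=N$, these blocks partition $[N]$ in each of the three coordinate positions and in the symbols. Set $R_D=\bigcup_{i\in D}R_i$, and similarly $C_D,F_D,M_D$, each of size $d$, with complements $R_{\overline D},C_{\overline D},F_{\overline D},M_{\overline D}$.

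First I would record a bookkeeping identity. The $d$ layers whose first coordinate lies in $R_D$ contain the subcubes $H_i$ ($i\in D$) in full and no other subcube cells, so they hold exactly $dN^2-\sum_{i\in D}h_i^3$ empty cells; subtracting this from the total number $N^3-\sum_{i=1}^{n}h_i^3$ of empty cells shows that the asserted inequality is equivalent to the single statement that the $N-d$ layers with first coordinate in $R_{\overline D}$ contain at least $3(N-d)\sum_{i\in D}h_i^2$ empty cells. This is the form I would actually prove.

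To produce these empty cells I would exhibit, for each $i\in D$, three families of cells $(r,c,f)$ with $r\in R_{\overline D}$: those with $c\in C_i$ and $f\in F_i$; those with $c\in C_i$ and symbol $C(r,c,f)\in M_i$; and those with $f\in F_i$ and symbol in $M_i$. Every such cell meets a coordinate or symbol reserved by $H_i$ while avoiding $R_i$, so it lies in no subcube and is empty. Using only that each line of the cube is a permutation of $[N]$ and that $H_i$ is itself a latin cube on $M_i$, I would show each family has size exactly $(N-d)h_i^2$: the first by a direct count, and each of the other two by evaluating the analogous line-count $|\{c\in C_i,\,C(r,c,f)\in M_i\}|=h_i^2N$ over the whole cube and subtracting the $h_i^2 d$ such cells whose first coordinate lies in $R_D$.

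The heart of the argument, and the step I expect to be the main obstacle, is disjointness. I would isolate one structural fact: if $(c,f)\in C_i\times F_i$, then the line obtained by varying the first coordinate already contains every symbol of $M_i$ within the rows $R_i$ (its restriction to $R_i$ is a line of $H_i$), so $C(r,c,f)\notin M_i$ for every $r\notin R_i$. Granting this ``no stray $M_i$-symbol in the $C_i\times F_i$ tube'' statement, any cell lying in two of the three families for the same $i$ would have $c\in C_i$, $f\in F_i$ and symbol in $M_i$ with $r\in R_{\overline D}$, which is impossible; and cells in families for distinct $i,i'$ are separated because the coordinate $c$ (resp.\ $f$, resp.\ the symbol) determines the index. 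Hence the three families, one triple for each $i\in D$, are pairwise disjoint with total size $3(N-d)\sum_{i\in D}h_i^2$, all empty and all among the $R_{\overline D}$-layers. Comparing with the number of empty cells available there gives the reduced inequality, and reversing the bookkeeping identity yields the theorem. Without the tube fact the three families overlap in an uncontrolled way and the factor $3$ is lost, so establishing it cleanly is where the real work lies.
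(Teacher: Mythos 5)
Your proof is correct, but it takes a genuinely different route from the paper's. The paper works with the union $U=\bigcup_{i\in D}L_i$ of the $3h_i^2$ lines through the blocks of each $H_i$ in all three orientations, counts exactly how often the symbols of $S=\bigcup_{i\in D}S_i$ occur inside $U$ (namely $\sum_{i\in D}h_i^3+3\sum_{i\neq j\in D}h_i^2h_j$), and compares the remaining occurrences, $N^2|S|$ minus that count, with the number of cells outside $U$ and outside the subcubes $H_j$ for $j\in\overline{D}$; there the factor $3$ comes from the three orientations of lines, and the argument is symmetric in the coordinates. You instead localize to the slab of $N-d$ layers with first coordinate in $R_{\overline{D}}$ (with $d=\sum_{i\in D}h_i$), reduce the theorem -- correctly, via your bookkeeping identity -- to the statement that this slab contains at least $3(N-d)\sum_{i\in D}h_i^2$ cells lying in no subcube, and exhibit these as three explicit pairwise disjoint families per $i\in D$, so your factor $3$ comes from three families inside a single slab and the construction is asymmetric in the coordinates. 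The essential structural inputs coincide: your \emph{tube fact} is precisely the paper's (asserted, not separately proved) observation that the symbols of $S_i$ occur in the lines of $L_i$ only within $H_i$, and your family counts of $(N-d)h_i^2$ each, obtained from the whole-cube count $h_i^2N$ minus the $h_i^2d$ occurrences in the $R_D$ layers, use only that every line is a permutation of the symbols. Note, though, that the two nonnegative slack quantities do not coincide (for instance a non-subcube cell with first coordinate in $R_D\setminus R_i$ carrying a symbol from $\overline{D}$ contributes to the paper's slack but lies outside your slab), so this is a different decomposition of the same inequality rather than a rewording. What your organization buys is a fully explicit disjointness analysis -- the paper merely asserts that the $L_i$ are cell-disjoint for distinct $i$ and that the only multiply covered cells are those of $H_i$ -- and a clean identification of where the inequality can be tight; the paper's symmetric line-packing version is shorter and transfers more directly to the $m$-dimensional analogue. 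All your counts and disjointness cases check out, so the proposal stands as a complete proof.
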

\begin{proof}
    Suppose that a $\LC(h_1\dots h_n)$, $C$, exists. We assume that $C$ is in normal form and that subcube $H_i$ contains the symbols of $S_i = (h_1+\dots+h_{i-1}) + [h_i]$. For some fixed subset $D$ of $[n]$, let $S = \cup_{i\in D}S_i$.

    Fix $i\in D$ and let $L_i$ be the set of lines defined by $(s,s',[N])$, $(s,[N],s')$ and $([N],s,s')$ for all $s,s'\in S_i$. The only cells appearing in more than one of these lines are the cells of $H_i$, which appear three times. Also, every symbol of $S$ appears within every line, with the symbols of $S_i$ being only within $H_i$.

    Consider the union of these sets $L_i$ for all $i\in D$. Clearly the cells in the $L_i$ are disjoint. Thus, the number of times that the symbols of $S$ occur in the union is

    $$\sum_{i\in D}h_i^3 + 3\sum_{\substack{i,j\in D \\ i\neq j}}h_i^2h_j.$$

    Therefore, the number of times they occur not within these lines is

    $$N^2|S| - \sum_{i\in D}h_i^3 - 3\sum_{\substack{i,j\in D \\ i\neq j}}h_i^2h_j = N^2\sum_{i\in D}h_i - \sum_{i\in D}h_i^3 - 3\sum_{\substack{i,j\in D \\ i\neq j}}h_i^2h_j = N^2\sum_{i\in D}h_i - (\sum_{i\in D}h_i)^3 + 6\sum_{\substack{i,j,k\in D \\ i<j<k}}h_ih_jh_k.$$

    We now count the number of cells not within these lines which can contain the symbols of $S$. This is all of the cells not within the lines, and not within the subcubes $H_j$ for $j\in \overline{D}$. The number of cells is

    $$N^3 - \sum_{i=1}^n h_i^3 - \sum_{i\in D}3h_i^2(N-h_i) = N^3 - \sum_{i=1}^n h_i^3 - 3N\sum_{i\in D}h_i^2 + 3(\sum_{i\in D}h_i^2)(\sum_{i\in D}h_i) + \sum_{i\in D}h_i^3 - (\sum_{i\in D}h_i)^3 + 6\sum_{\substack{i,j,k\in D \\ i<j<k}}h_ih_jh_k.$$

    Therefore, combining these two numbers,
    
    $$N^3 - \sum_{i=1}^n h_i^3 \geq N^2\sum_{i\in D} h_i + 3(\sum_{i\in D} h_i^2)(N-\sum_{i\in D} h_i) - \sum_{i\in D} h_i^3.$$

\end{proof}

We now begin to update the established existence results of 2-realizations for 3-realizations.

\begin{theorem}
\label{a^n}
    A $\LC(a^n)$ exists for all $a,n\in\mathbb{Z}^+$.
\end{theorem}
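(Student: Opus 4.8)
The plan is to reduce the entire theorem to a single application of inflation. As observed immediately before the statement, if we can produce a latin cube of order $n$ whose main diagonal $\{(i,i,i) : i\in[n]\}$ is a transversal (that is, carries $n$ distinct symbols), then inflating it by $a$ via Construction \ref{inflation}, with $T$ any latin cube of order $a$, yields a $\LC(a^n)$: each diagonal cell is replaced by a block that is a copy of $T$, and because the diagonal symbols are distinct these $n$ blocks occupy disjoint ranges of rows, columns, files and symbols. Hence the whole result follows once a diagonal-transversal latin cube of each order $n$ is exhibited.

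First I would construct such a cube explicitly over $\mathbb{Z}_n$. Identifying the symbol set $[n]$ with $\mathbb{Z}_n$, define $C(i,j,k)\equiv i+j-k\pmod n$. To verify that $C$ is a latin cube, note that fixing any two coordinates leaves the third appearing with coefficient $\pm 1$; since $\pm 1$ are units modulo $n$, the value runs over all residues as the free coordinate runs over $[n]$, so any two cells differing in exactly one coordinate receive different symbols. Along the main diagonal, $C(i,i,i)\equiv i$, which is a permutation of the symbols, so the diagonal is a transversal. Crucially, this works uniformly for every $n$.

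Then I would invoke the inflation argument above: inflating this $C$ by $a$ produces a latin cube of order $an$ whose $n$ diagonal blocks are pairwise disjoint subcubes of order $a$, which is precisely a $\LC(a^n)$.

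The step to be careful about is the case $n=2$, which is the natural obstacle here. One cannot simply combine \Cref{a^n square} with \Cref{CubefromSquare}, since $\LS(a^2)$ does \emph{not} exist, so the square-to-cube transfer is unavailable at exactly this value of $n$. The uniform $i+j-k$ construction sidesteps this entirely, because its diagonal is always a transversal regardless of $n$; by contrast the cyclic cube $i+j+k$ has diagonal $3i$, which fails to be a transversal whenever $3\mid n$, so the sign in the construction is doing real work. If one instead prefers to reuse the established results, the alternative is to handle $n\neq 2$ through \Cref{a^n square} and \Cref{CubefromSquare}, and to dispatch $n=2$ separately by inflating the $\LC(1^2)$ of \autoref{fig:1,1 cube} by $a$.
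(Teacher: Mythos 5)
Your proof is correct, but it takes a genuinely different route from the paper. The paper splits into cases: for $n\neq 2$ it combines \Cref{a^n square} with \Cref{CubefromSquare}, and for $n=2$ it inflates the $\LC(1^2)$ of \autoref{fig:1,1 cube} by a latin cube of order $a$ --- this is exactly the fallback you sketch in your final sentences. Your primary argument instead gives a single uniform construction: the cube $C(i,j,k)\equiv i+j-k\pmod n$ is latin because the free coordinate always appears with coefficient $\pm 1$, a unit modulo $n$, and its diagonal $C(i,i,i)\equiv i$ is a transversal for \emph{every} $n$, so one application of \Cref{inflation} (via the remark the paper itself makes after that construction, that a diagonal transversal inflates to a $\LC(a^N)$) finishes all cases at once. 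This buys self-containment and uniformity --- no appeal to the nontrivial two-distinct-orders literature behind \Cref{a^n square}, and no case split at $n=2$ --- and your observation that the sign matters (the diagonal of $i+j+k$ is $3i$, which fails whenever $3\mid n$) correctly identifies where a naive cyclic construction would break. What the paper's route buys in exchange is economy given its existing machinery: having already proved \Cref{CubefromSquare}, it inherits everything known about 2-realizations for free and only needs to patch the single value $n=2$, which also foreshadows the paper's broader theme that cube realizations strictly extend square realizations.
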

\begin{proof}
    $\LC(a^n)$ exists for all $n\neq 2$ by \Cref{a^n square} and \Cref{CubefromSquare}.

    For $\LC(a^2)$, take the latin cube $C$ of order 2 given in \autoref{fig:1,1 cube}.

    Now take a latin cube $A$ of order $a$ and inflate $C$ with $A$ to get a latin cube of order $2a$ with 2 disjoint subcubes of order $a$ on the main diagonal.
\end{proof}

The following theorem provides both a condition for existence and also a method for constructing 3-realizations of partitions.

\begin{theorem}
\label{halving condition}
    A $\LC(h_1\dots h_n)$ with $h_1 = \frac{N}{2}$ exists if and only if a $\LC(h_2\dots h_n)$ exists.
\end{theorem}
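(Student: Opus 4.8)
The plan is to prove both implications, writing $M = N/2 = h_1$, so that $(h_2,\dots,h_n)$ is a partition of $M$. In normal form (achieved by permuting rows, columns and files and relabelling symbols) the subcube $H_1$ occupies $[M]^3$ with symbols $[M]$, while $H_2,\dots,H_n$ sit inside $(M+[M])^3$ with symbols in $M+[M]$. I will use the single structural fact that a subcube of the maximal order $N/2$ rigidly determines a ``parity'' colouring of all cells.

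For the backward direction, suppose $D$ is a $\LC(h_2\dots h_n)$, a latin cube of order $M$. I would index each coordinate of an order-$2M$ cube by a pair $(a,x)\in\{0,1\}\times[M]$, likewise the symbols, and set
\[
C\big((a,x),(b,y),(c,z)\big)=\big(a+b+c \bmod 2,\; f_{abc}(x,y,z)\big),
\]
where each $f_{abc}$ is a latin cube of order $M$ and $f_{111}=D$ (the remaining seven arbitrary). This mimics the order-$2$ cube of \autoref{fig:1,1 cube} in the first symbol-bit. To check $C$ is a latin cube, fix two coordinates and vary the third: the bit $a+b+c$ sends the two values of the varying bit to opposite symbol-parities, and within each parity the relevant $f_{abc}$ is a bijection in that coordinate, so all $2M$ symbols occur. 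The block $a=b=c=0$ is a latin cube of order $M$ on symbols $[M]$, yielding $H_1$; the block $a=b=c=1$ is a copy of $D$ on symbols $M+[M]$, so its subcubes give disjoint $H_2,\dots,H_n$.

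For the forward direction, let $C$ be a $\LC(h_1\dots h_n)$ in normal form and let $B$ be the sub-array of $C$ on $(M+[M])^3$. The heart of the argument is the claim that the symbol in cell $(x,y,z)$ lies in $[M]$ or in $M+[M]$ according as the number of coordinates lying in $M+[M]$ is even or odd — exactly the parity pattern of the construction above. I would prove this by combining two facts. First, a file argument: for $x,y\in[M]$ the file $(x,y,[M])$ is a file of the order-$M$ latin cube $H_1$, hence carries all of $[M]$, forcing $(x,y,M+[M])$ to carry all of $M+[M]$. Second, the standard latin-square halving lemma: in a latin square of order $2M$ whose top-left $M\times M$ block is a subsquare on one symbol-half, the other three blocks are forced onto the complementary halves in a checkerboard pattern. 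Applying this lemma to each file-layer $([N],[N],z_0)$ — using the layers of $H_1$ to seed $z_0\in[M]$ and the file argument to seed $z_0\in M+[M]$ — propagates the parity rule to all eight position-types. Once this holds, every line of $B$ is the odd-parity half of a line of $C$: its $M$ entries are distinct and all lie in $M+[M]$, so $B$ is a latin cube of order $M$ on symbols $M+[M]$. Since $H_2,\dots,H_n\subseteq(M+[M])^3$ with symbols in $M+[M]$, they survive as disjoint subcubes of $B$, and relabelling $M+[M]\to[M]$ turns $B$ into the required $\LC(h_2\dots h_n)$.

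The main obstacle is the parity lemma in the forward direction. Unlike the two-dimensional halving result, one must propagate the symbol-half assignment consistently across all three line directions at once; the substantive point is that a subcube of the extremal order $N/2$ forces the coarse (half) structure of the entire cube, leaving no freedom in where the two symbol-halves may appear. The backward construction, by contrast, reduces to a bookkeeping verification once the parity bit is modelled on the order-$2$ cube.
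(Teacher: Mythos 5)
Your proposal is correct and takes essentially the same route as the paper: your forward-direction parity lemma is precisely the paper's octant argument (\autoref{ABCDEFGH}), which asserts that the symbols of $H_1$ are forced into the even-parity octants $A$, $D$, $F$, $G$, so that the odd octant $H$ is a latin cube on the remaining symbols containing $H_2,\dots,H_n$, i.e.\ a $\LC(h_2\dots h_n)$. Your backward construction $C\big((a,x),(b,y),(c,z)\big)=\big(a+b+c \bmod 2,\, f_{abc}(x,y,z)\big)$ is just the explicit, slightly more general form of the paper's one-line argument, which takes a $\LC(h_1^2)$ (an inflation of the order-$2$ cube of \autoref{fig:1,1 cube}, corresponding to all $f_{abc}$ equal) and replaces one diagonal subcube of order $h_1$ by the given $\LC(h_2\dots h_n)$ (your $f_{111}=D$).
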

\begin{proof}
    If $C_1$ is a $\LC(h_2\dots h_n)$, then let $h_1 = \sum_{i=2}^n h_i$. We construct $C_2$ by taking a $\LC(h_1^2)$, which we know exists from \Cref{a^n}, and replacing one of the subcubes of order $h_1$ with $C_1$. Thus, $C_2$ is a $\LC(h_1\dots h_n)$ with $h_1 = \frac{N}{2}$.

    If $C_1$ is a $\LC(h_1\dots h_n)$ with $h_1 = \frac{N}{2} = \sum_{i=2}^n h_i$, then it can be represented as in \Cref{ABCDEFGH}, where all cells are $h_1\times h_1\times h_1$ arrays, $A$ is the order $h_1$ subcube $H_1$ and $H$ contains all other subcubes.

    \begin{figure}[h]
        \centering

        \begin{tikzpicture}
            \draw (0,0) -- (1.5,0);
            \draw (0,0) -- (0,-1.5);
            \draw (1.5,0) -- (1.5,-1.5);
            \draw (0,-1.5) -- (1.5,-1.5);
            \draw (0,-0.75) -- (1.5,-0.75);
            \draw (0.75,0) -- (0.75,-1.5);
            \node at (0.375,-0.375) {A};
            \node at (1.125,-0.375) {B};
            \node at (0.375,-1.125) {C};
            \node at (1.125,-1.125) {D};
        \end{tikzpicture}
        \quad
        \begin{tikzpicture}
            \draw (0,0) -- (1.5,0);
            \draw (0,0) -- (0,-1.5);
            \draw (1.5,0) -- (1.5,-1.5);
            \draw (0,-1.5) -- (1.5,-1.5);
            \draw (0,-0.75) -- (1.5,-0.75);
            \draw (0.75,0) -- (0.75,-1.5);
            \node at (0.375,-0.375) {E};
            \node at (1.125,-0.375) {F};
            \node at (0.375,-1.125) {G};
            \node at (1.125,-1.125) {H};
        \end{tikzpicture}
        \caption{}
        \label{ABCDEFGH}
    \end{figure}
    
    Since $C_1$ is a latin cube, all symbols from $H_1$ must appear in the subarrays $A$, $D$, $F$ and $G$, with the remaining cells only containing symbols from the remaining subcubes. Thus, $H$ is a latin cube on the symbols from subcubes $H_2,\dots,H_n$. Therefore $H$ is a $\LC(h_2\dots h_n)$.
\end{proof}

\begin{example}
    The partition $(3^21)$ satisfies the condition from \Cref{condition1} but not the condition from \Cref{condition2}.

    The partition $(10^12^11^7)$ satisfies the condition from \Cref{condition2} but not the condition from \Cref{condition1}. This shows that both of these conditions are needed as necessary conditions for the existence of a 3-realization.

    The partition $(19^110^12^11^7)$ satisfies both conditions, but the existence of a $\LC(19^110^12^11^7)$ would imply the existence of a $\LC(10^12^11^7)$ by \Cref{halving condition}. The partition $(10^12^11^7)$ cannot be realized due to \Cref{condition1}. Thus, we can conclude that the first two conditions are necessary but not sufficient to show existence.
\end{example}

\section{Partitions with at most two distinct parts}

This section will focus on partitions with two distinct parts, given by $(a^ub^{n-u})$, where $a > b$. Theorems \ref{small n squares} and \ref{squaresatmost2} give the existence without bounds of a 2-realization for many such partitions, so we need only consider the cases where $1<n<5$ or $u<3$.

\begin{lemma}
    A $\LC(h_1h_2)$ exists if and only if $h_1=h_2$.
\end{lemma}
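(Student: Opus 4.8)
The plan is to prove the two implications separately, since each reduces directly to a result already established in the paper. Throughout, write $N = h_1 + h_2$ and recall that a partition is non-increasing, so $h_1 \geq h_2 > 0$. The content of the lemma is really a matter of combining the general necessary condition of \Cref{condition1} with the general existence result of \Cref{a^n}, so I do not expect a genuine obstacle; the main point is to observe that the two-part case collapses immediately.

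For the ``if'' direction, suppose $h_1 = h_2$. Then the partition is $(a^2)$ with $a = h_1 = h_2$, and a $\LC(a^2)$ exists for every $a \in \mathbb{Z}^+$ by \Cref{a^n}. Hence a $\LC(h_1 h_2)$ exists.

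For the ``only if'' direction, suppose a $\LC(h_1 h_2)$ exists. Here $n = 2 \geq 2$, so \Cref{condition1} applies and gives $h_1 \leq \tfrac{N}{2} = \tfrac{h_1 + h_2}{2}$. Rearranging yields $2h_1 \leq h_1 + h_2$, that is, $h_1 \leq h_2$. Combined with the partition assumption $h_1 \geq h_2$, this forces $h_1 = h_2$, as required. The only step worth flagging is the verification that $n = 2$ is within the hypothesis of \Cref{condition1} (it requires $n \geq 2$), after which the inequality does all the work. It is also worth contrasting this with the square case, where \Cref{small n squares} shows a $\LS(h_1 h_2)$ never exists: in the cube setting the equal-part case becomes realizable precisely because $\LC(a^2)$ exists, which is the one genuinely new input.
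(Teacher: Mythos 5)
Your proof is correct and takes essentially the same approach as the paper: existence via \Cref{a^n} for the partition $(a^2)$, and necessity via the bound $h_1\leq \frac{N}{2}$ from \Cref{condition1}. The only cosmetic difference is that the paper applies the bound to both $h_1$ and $h_2$ and sums, while you apply it to $h_1$ and invoke the ordering $h_1\geq h_2$; the two are interchangeable.
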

\begin{proof}
    If $h_1=h_2$, then we are considering the partition $(h_1^2)$. By \Cref{a^n}, a $\LC(h_1^2)$ exists.
    
    Suppose that there is a latin cube $\LC(h_1h_2)$. From \Cref{condition1}, $h_1\leq \frac{N}{2}$ and $h_2\leq \frac{N}{2}$. Since $h_1+h_2 = N$, $h_1=h_2=\frac{N}{2}$.
\end{proof}

We now consider the case where $u = 1$. For latin squares, $a$ is limited by the bound $a\leq (n-1)b$ given in \Cref{squaresatmost2}, and for $n< 4$ the 2-realization does not exist. We begin with the case where $n=3$.

\begin{lemma}
\label{ab^2}
    A $\LC(a^1b^2)$ with $a>b$ exists if and only if $a\leq 2b$.
\end{lemma}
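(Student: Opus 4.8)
The plan is to treat the two implications separately, with essentially all of the work in the construction. For necessity I would simply invoke \Cref{condition1}: since the partition has $n=3\ge 2$ parts, the largest part obeys $a\le \frac{N}{2}=\frac{a+2b}{2}$, which rearranges to $a\le 2b$. It is worth flagging at the outset why the cheap route to sufficiency is closed: a $\LS(a^1b^2)$ never exists for $a>b$ by \Cref{small n squares}, so \Cref{CubefromSquare} yields nothing here and the third coordinate must be genuinely exploited.

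The boundary case $a=2b$ falls out immediately. Here $h_1=a=2b=\frac{N}{2}$, so \Cref{halving condition} says a $\LC((2b)^1b^2)$ exists if and only if a $\LC(b^2)$ exists, and the latter is guaranteed by \Cref{a^n}. This disposes of $a=2b$ and leaves the open interval $b<a<2b$, where $a<\frac{N}{2}$ and the halving trick no longer applies. Using inflation (\Cref{inflation}) I may also assume $\gcd(a,b)=1$, since a realization of $(a/d)^1(b/d)^2$ inflates to one of $(a^1b^2)$ and the inequality $a\le 2b$ is scale invariant; this simplifies the bookkeeping but does not by itself reduce to finitely many cases.

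For $b<a<2b$ I would build the cube directly, viewing it as a stack of $N$ layers in the third direction, grouped as the $a$ layers indexed by $A$, the $b$ layers indexed by $B$, and the $b$ layers indexed by $C$. Each layer must be a latin square of order $N$, and the stack must satisfy the file condition so as to form a latin cube. Crucially, the subcube requirements are local to each group: a layer indexed by $A$ need only carry an order-$a$ subsquare on the block $A\times A$ with symbols $A$ (which exists because $a\le\frac{N}{2}$), a layer indexed by $B$ only an order-$b$ subsquare on $B\times B$, and likewise for $C$; no single layer has to be a full $2$-realization. The design idea is to realize the two small subcubes by the same \emph{swap across layers} that makes a $\LC(b^2)$ exist---interchanging the symbol groups $B$ and $C$ between the $B$- and $C$-layers---while seating the order-$a$ subcube in the slack provided by $a\le 2b$. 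Concretely I would first write the amalgamated $3\times3\times3$ outline box for $(a,b,b)$, force the diagonal blocks $(A,A,A),(B,B,B),(C,C,C)$ to be monochromatic in their own group, and solve the remaining line-count constraints for the off-diagonal symbol multiplicities; these are linear, and the $B\leftrightarrow C$ symmetry together with $a\le 2b$ should make them feasible.

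The hard part will be that last step, because an outline box---unlike an outline rectangle in \Cref{outline rectangle to square}---need not be the reduction of any latin cube, as the paper has already shown. A feasible symbol distribution is therefore not enough on its own. I expect to have to either exhibit an explicit latin cube attaining the distribution and verify by hand the rows, columns, and especially the \emph{file} condition simultaneously with the three subcube conditions, or else arrange the construction so that the layers form latin boxes that are known to extend, thereby guaranteeing the lift. Verifying the global file condition while keeping three subcubes of two different orders intact is the genuine obstacle; the per-layer latin property and the counting conditions are routine by comparison.
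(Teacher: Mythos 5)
Your necessity argument and your treatment of the boundary case $a=2b$ coincide exactly with the paper's proof (\Cref{condition1}, then \Cref{halving condition} together with \Cref{a^n}), and your observation that \Cref{CubefromSquare} is unavailable is correct. But for the main range $b<a<2b$ your proposal has a genuine gap, and you have named it yourself: everything hinges on lifting a feasible $3\times3\times3$ outline box to an actual latin cube, and the paper has already proved that outline boxes, unlike outline rectangles, need not be reductions of latin cubes. Your two fallback options do not close this gap --- ``exhibit an explicit cube and verify by hand'' is not a construction for infinitely many pairs $(a,b)$, and no extension theorem for the relevant latin boxes is cited or proved. So what you have is a plan whose decisive step is exactly the known obstruction, not a proof.

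The paper's resolution is worth contrasting, because it sidesteps the three-dimensional lifting problem entirely. It never builds an outline box: instead it constructs, via an outline \emph{rectangle} (where \Cref{outline rectangle to square} guarantees the lift unconditionally), a latin square $L$ of order $N=a+2b$ that is \emph{not} a $2$-realization but has a weaker structure that suffices --- diagonal subsquares on $S_i\times S_i$ filled with the symbols $[h_i]$, together with column-disjoint, symbol-disjoint subsquares in the top rows, i.e.\ $L(r,c)\in S_i$ for $(r,c)\in[h_i]\times S_i$. Feeding this $L$ into \Cref{construction} gives $C(r,c,l)=L(L(r,l),c)$: for $r,c,l\in S_i$ one gets $L(r,l)\in[h_i]$ and hence $C(r,c,l)\in S_i$, so the diagonal subcubes appear even though $L$ itself realizes nothing. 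This is precisely the possibility your opening paragraph dismissed: while \Cref{CubefromSquare} applied to a $2$-realization yields nothing here, the underlying composition construction applies to a wider class of squares, and that slack is where the theorem lives. If you want to salvage your layer-stacking approach, you would need to prove an extension theorem for the specific latin boxes your outline box produces; absent that, the two-dimensional detour is the only complete argument on the table.
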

\begin{proof}
    Suppose that a $\LC(a^1b^2)$ exists, then by \Cref{condition1} we have that $a\leq 2b$. Now suppose that this is satisfied.

    If $a=2b$ then a $\LC(a^1b^2)$ can be constructed by \Cref{halving condition} with a $\LC(a^2)$.

    If $a<2b$, then we begin by constructing an outline rectangle $O$, with the structure shown in \Cref{ab^2 O}. We take the partitions $P = Q = R = (b^1(a-b)^1b^2)$ for the outline rectangle, so we are working with the symbols $\{1,2,3,4\}$. In $O$, we want the cells $A_1$, $A_2$, $A_3$ and $A_4$ to contain a total of $a^2$ copies of only symbols 1 and 2, and the cells $B'$, $C'$ and $D'$ to be filled with $b^2$ copies of 3, 4 and 1 respectively.

    \begin{figure}[h]
        \centering

        \begin{subfigure}[b]{0.3\textwidth}
            \centering
            \begin{tikzpicture}
            \draw (0,0) -- (3.5,0);
            \draw (0,0) -- (0,-3.5);
            \draw (3.5,0) -- (3.5,-3.5);
            \draw (0,-3.5) -- (3.5,-3.5);
            \draw (0,-1.5) -- (3.5,-1.5);
            \draw (1.5,0) -- (1.5,-3.5);
            \draw (2.5,0) -- (2.5,-3.5);
            \draw (0,-2.5) -- (3.5,-2.5);
            \draw (0,-1) -- (3.5,-1);
            \draw (1,0) -- (1,-3.5);
            \node at (0.5,-0.5) {A$_1$};
            \node at (1.25,-0.5) {A$_2$};
            \node at (0.5,-1.25) {A$_3$};
            \node at (1.25,-1.25) {A$_4$};
            \node at (2, -0.5) {B$'$};
            \node at (3, -0.5) {C$'$};
            \node at (2, -2) {D$'$};
            \node at (3, -3) {D$'$};
            \end{tikzpicture}
            \caption{The outline rectangle $O$}
            \label{ab^2 O}
        \end{subfigure}
        \begin{subfigure}[b]{0.3\textwidth}
            \centering
            \begin{tikzpicture}
            \draw (0,0) -- (3.5,0);
            \draw (0,0) -- (0,-3.5);
            \draw (3.5,0) -- (3.5,-3.5);
            \draw (0,-3.5) -- (3.5,-3.5);
            \draw (0,-1.5) -- (2.5,-1.5);
            \draw (1.5,0) -- (1.5,-2.5);
            \draw (2.5,0) -- (2.5,-1);
            \draw (2.5,-1.5) -- (2.5,-3.5);
            \draw (1.5,-2.5) -- (3.5,-2.5);
            \draw (1.5,-1) -- (3.5,-1);
            \node at (0.75,-0.75) {A};
            \node at (2, -0.5) {B};
            \node at (3, -0.5) {C};
            \node at (2, -2) {D};
            \node at (3, -3) {D};
            \end{tikzpicture}
            \caption{The subsquare structure of $L$}
            \label{ab^2 L}
        \end{subfigure}
        
        \caption{}
    \end{figure}

    Fill the array $O$ as in \Cref{ab^2 outline}, where $x:y$ indicates that there are $y$ copies of symbol $x$ in that cell.

    \begin{figure}[h]
        \centering
        \renewcommand{\arraystretch}{2.7}
        \begin{tabular}{|l|l|l|l|}\hline
        \thead{$1: b(2b-a)$\\$2: b(a-b)$} & $1: b(a-b)$ & $3:b^2$ & $4:b^2$ \\\hline
        $1: b(a-b)$ & $2:(a-b)^2$ & $4:b(a-b)$ & $3:b(a-b)$ \\\hline
        $4:b^2$ & $3:b(a-b)$ & $1:b^2$ & \thead{$2:b(a-b)$\\$3:b(2b-a)$} \\\hline
        $3:b^2$ & $4:b(a-b)$ & \thead{$2:b(a-b)$\\$4:b(2b-a)$} & $1:b^2$\\\hline
        \end{tabular}
        \caption{The outline rectangle $O$}
        \label{ab^2 outline}
    \end{figure}

    The array $O$ satisfies all of the conditions of an outline rectangle when $N = a+2b$, so this array is the reduction of a latin square $L$ by \Cref{outline rectangle to square}. Clearly this array also has the cells $A_1$, $A_2$, $A_3$, $A_4$, $B'$, $C'$ and $D'$ filled as described above.

    Symbols 1 and 2 will become the symbols of $[a]$ in $L$, and since there are $a^2$ of them in $A_1\cup A_2\cup A_3\cup A_4$, $L$ will have a subsquare of order $a$, shown by $A$ in \Cref{ab^2 L}. Similarly, the entries of $B'$ force $B$ to be a subsquare of order $b$ on the symbols of $a + [b]$, $C'$ makes $C$ a subsquare of order $b$ on the symbols of $(a+b) + [b]$, and $D'$ causes $D$ to be a subsquare of order $b$ on the symbols of $[b]$. Therefore, due to the structure of $O$, any latin square $L$, constructed from $O$, will have the subsquare structure shown in \Cref{ab^2 L}.

    The latin square $L$ has subsquares $H_1,H_2,H_3$ on the main diagonal with $h_1 = a$, $h_2 = h_3 = b$ and symbols $1$ to $h_i$ in each subsquare $H_i$. So if $S_i = (h_1+\dots +h_{i-1}) + [h_i]$, then for all $(r,c)\in S_i\times S_i$ we have that the symbols in that subsquare are in $[h_i]$. Also, the same subsquares, but with disjoint symbols between subsquares, appear in the first $a$ or $b$ rows respectively. So if $(r,c)\in [h_i]\times S_i$ for some $i\in [3]$, then the symbols in that subsquare are in $S_i$.

    We construct a latin cube $C$ using Construction \ref{construction}, similar to the method used in \Cref{CubefromSquare}. To accomodate for the fact that a realization does not exist, the symbol-disjoint subsquares are now at the top of $L$, instead of on the main diagonal, but they are still column-disjoint. We know that $C$ must be a latin cube from our construction, so we now check that it has the required subcubes on the main diagonal.

    Note that for any part $h_i$, if $(r,c)\in S_i\times S_i$ then $L(r,c)\in [h_i]$ by the subsquare structure of $L$. Also, if $(r,c)\in [h_i]\times S_i$ then $L(r,c)\in S_i$ by the earlier assumption. So if $l,c,r\in S_i$ then $L(r,l) = a\in [h_i]$ and $L(a,c)\in S_i$. Thus, $C(r,c,l) = L(L(r,l),c)\in S_i$. Therefore, the cubes formed by the columns, rows and layers of $S_i$ are filled with only the elements $S_i$. So $C$ has the required subcubes.
\end{proof}

\begin{example}
    The latin squares in Figures \ref{ab^2 ex1} and \ref{ab^2 ex2} can be used to construct a $\LC(a^1b^2)$ with Construction \ref{construction}.

    \begin{figure}[h]
        \centering

        \begin{subfigure}[b]{0.3\textwidth}
            \centering
            $\arraycolsep=4pt\begin{array}{|cc|c|cc|cc|} \hline 
            1 & 1 & 1 & 3 & 3 & 4 & 4\\ 
            2 & 2 & 1 & 3 & 3 & 4 & 4\\ \hline
            1 & 1 & 2 & 4 & 4 & 3 & 3\\ \hline
            4 & 4 & 3 & 1 & 1 & 2 & 2\\ 
            4 & 4 & 3 & 1 & 1 & 3 & 3\\ \hline
            3 & 3 & 4 & 2 & 2 & 1 & 1\\ 
            3 & 3 & 4 & 4 & 4 & 1 & 1 \\ \hline\end{array}$
            \caption{The outline rectangle $O$}
        \end{subfigure}
        \begin{subfigure}[b]{0.3\textwidth}
            \centering
            $\arraycolsep=4pt\begin{array}{|ccc|cc|cc|} \hline
            1 & 3 & 2 & 4 & 5 & 6 & 7\\ 
            3 & 2 & 1 & 5 & 4 & 7 & 6\\ \hhline{|~~~|--|--|}
            2 & 1 & 3 & 6 & 7 & 4 & 5\\ \hline
            6 & \multicolumn{1}{c|}{7} & 4 & 1 & 2 & 5 & 3\\ 
            7 & \multicolumn{1}{c|}{6} & 5 & 2 & 1 & 3 & 4\\ \hline
            4 & \multicolumn{1}{c|}{5} & 6 & 7 & 3 & 1 & 2\\ 
            5 & \multicolumn{1}{c|}{4} & 7 & 3 & 6 & 2 & 1 \\ \hline\end{array}$
            \caption{The latin square $L$}
        \end{subfigure}
        
        \caption{A latin square $L$ for a $\LC(3^12^2)$ and the outline rectangle used to find $L$.}
        \label{ab^2 ex1}
    \end{figure}

    \begin{figure}[h]
        \centering

        \begin{subfigure}[b]{0.4\textwidth}
            \centering
            $$\arraycolsep=2pt\begin{array}{|cccc|ccc|ccc|}\hline 
            1 & 2 & 4 & 3 & 5 & 6 & 7 & 8 & 9 & 10\\ 
            2 & 4 & 3 & 1 & 6 & 7 & 5 & 9 & 10 & 8\\ 
            4 & 3 & 1 & 2 & 7 & 5 & 6 & 10 & 8 & 9\\ \hhline{|~~~~|---|---|}
            3 & 1 & 2 & 4 & 8 & 9 & 10 & 5 & 6 & 7\\ \hline
            8 & 9 & \multicolumn{1}{c|}{10} & 5 & 1 & 2 & 3 & 4 & 7 & 6\\ 
            9 & 10 & \multicolumn{1}{c|}{8} & 6 & 2 & 3 & 1 & 7 & 4 & 5\\ 
            10 & 8 & \multicolumn{1}{c|}{9} & 7 & 3 & 1 & 2 & 6 & 5 & 4\\ \hline
            5 & 6 & \multicolumn{1}{c|}{7} & 8 & 4 & 10 & 9 & 1 & 2 & 3\\ 
            6 & 7 & \multicolumn{1}{c|}{5} & 9 & 10 & 4 & 8 & 2 & 3 & 1\\ 
            7 & 5 & \multicolumn{1}{c|}{6} & 10 & 9 & 8 & 4 & 3 & 1 & 2\\ \hline \end{array}$$
            \caption{A latin square $L$ for a $\LC(4^13^2)$.}
        \end{subfigure}
        \begin{subfigure}[b]{0.4\textwidth}
            \centering
            $$\arraycolsep=2pt\begin{array}{|ccccc|ccc|ccc|} \hline
            1 & 4 & 5 & 2 & 3 & 6 & 7 & 8 & 9 & 10 & 11\\ 
            5 & 2 & 4 & 3 & 1 & 7 & 8 & 6 & 10 & 11 & 9\\ 
            4 & 5 & 3 & 1 & 2 & 8 & 6 & 7 & 11 & 9 & 10\\ \hhline{|~~~~~|---|---|}
            3 & 1 & 2 & 4 & 5 & 9 & 10 & 11 & 6 & 7 & 8\\ 
            2 & 3 & 1 & 5 & 4 & 10 & 11 & 9 & 7 & 8 & 6\\ \hline
            9 & 10 & \multicolumn{1}{c|}{11} & 6 & 7 & 1 & 2 & 3 & 8 & 4 & 5\\ 
            10 & 11 & \multicolumn{1}{c|}{9} & 7 & 8 & 2 & 3 & 1 & 5 & 6 & 4\\ 
            11 & 9 & \multicolumn{1}{c|}{10} & 8 & 6 & 3 & 1 & 2 & 4 & 5 & 7\\ \hline
            6 & 7 & \multicolumn{1}{c|}{8} & 9 & 10 & 11 & 4 & 5 & 1 & 2 & 3\\ 
            7 & 8 & \multicolumn{1}{c|}{6} & 10 & 11 & 5 & 9 & 4 & 2 & 3 & 1\\ 
            8 & 6 & \multicolumn{1}{c|}{7} & 11 & 9 & 4 & 5 & 10 & 3 & 1 & 2\\ \hline \end{array}$$
            \caption{A latin square $L$ for a $\LC(5^13^2)$.}
        \end{subfigure}
        
        \caption{A latin square $L$ which can be used to construct a $\LC(a^1b^2)$.}
        \label{ab^2 ex2}
    \end{figure}
    
\end{example}

We will now extend the previous lemma to any value of $n\geq 3$.

\begin{theorem}
\label{ab^n-1}
    For $n\geq 3$, a $\LC(ab^{n-1})$ exists if and only if $a\leq (n-1)b$.
\end{theorem}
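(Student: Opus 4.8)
The plan is to prove necessity directly from \Cref{condition1} and to prove sufficiency by the same device used in \Cref{ab^2}: build one latin square $L$ of order $N = a+(n-1)b$ with a prescribed block structure, and then feed it to Construction \ref{construction}. Necessity is immediate. If a $\LC(ab^{n-1})$ exists then, since $n\geq 3\geq 2$, \Cref{condition1} applies to the largest part and gives $a\leq N/2$ with $N = a+(n-1)b$; that is $2a\leq a+(n-1)b$, so $a\leq (n-1)b$.

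For sufficiency I would reduce everything to the existence of a latin square $L$ of order $N$ with the two block properties isolated in \Cref{ab^2}. Writing $S_1=[a]$ and $S_i = (a+(i-2)b)+[b]$ for $2\leq i\leq n$, and $h_1=a$, $h_i=b$ for $i\geq 2$, I need: (A) every entry of the diagonal block $S_i\times S_i$ lies in $[h_i]$, and (B) every entry of the block $[h_i]\times S_i$ (rows $1,\dots,h_i$, columns $S_i$) lies in $S_i$. Granting such an $L$, Construction \ref{construction} produces $C(r,c,l)=L(L(r,l),c)$, and for $(r,c,l)\in S_i\times S_i\times S_i$ property (A) gives $L(r,l)\in[h_i]$ and then property (B) gives $L(L(r,l),c)\in S_i$; hence each $S_i\times S_i\times S_i$ is filled only with symbols of $S_i$, yielding pairwise disjoint subcubes of orders $h_1,\dots,h_n$. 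This verification is identical to the one closing \Cref{ab^2}, so the entire content lies in the construction of $L$.

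To build $L$ I would use outline rectangles and \Cref{outline rectangle to square}, generalizing the array in \Cref{ab^2 outline}. I would take $P=Q=R$ to be the partition that refines the $a$-block into $\lceil a/b\rceil$ pieces (as many of size $b$ as possible, plus one remainder) together with the $n-1$ parts equal to $b$, and let the symbol partition track the classes $[b]$, the rest of $S_1$, and each $S_i$ for $i\geq 2$. Properties (A) and (B) are then imposed cell-by-cell exactly as in the $n=3$ case: the diagonal $b$-cells carry only the low symbol class (like cell $(3,3)$ of \Cref{ab^2 outline}) and the cells in the first row-block beneath each $S_i$ carry only the class $S_i$ (like cell $(1,3)$). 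Two shortcuts cut down the work: when $b\mid a$, say $a=kb$ with $k\leq n-1$, I can instead inflate a $\LC(k^1 1^{n-1})$ by $b$ using Construction \ref{inflation}, reducing to the case $b=1$; and the extreme value $a=(n-1)b$, where $a=N/2$, follows at once from \Cref{halving condition} together with the existence of $\LC(b^{n-1})$ from \Cref{a^n}.

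The hard part will be filling the remaining cells of the outline rectangle with \emph{nonnegative} symbol multiplicities so that all three outline-rectangle conditions hold, and doing so uniformly over the full admissible range $b<a\leq(n-1)b$. In the $n=3$ construction the multiplicities $b(2b-a)$ already force $a\leq 2b$, so for larger $a$ the $a$-block must be split into more than two pieces and the balancing of symbol counts across row-blocks and column-blocks becomes delicate. I would handle this either by an explicit circulant-style fill on the equal-sized pieces of the $a$-block, mirroring \Cref{ab^2 outline}, or by induction on $n$: for $a\leq(n-2)b$, adjoin a single row-, column-, and symbol-block of size $b$ to an outline rectangle realizing $(ab^{n-2})$, and cover the top range $(n-2)b<a\leq(n-1)b$ by the halving and inflation arguments above. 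Checking that these pieces assemble into one construction valid for every $a$ in range, with conditions 1--3 of an outline rectangle verified throughout, is where the bulk of the effort will go.
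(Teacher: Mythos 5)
Your overall architecture is the same as the paper's: necessity from \Cref{condition1}, and sufficiency by building a latin square $L$ with exactly the two block properties you call (A) and (B) (diagonal blocks $S_i\times S_i$ filled from $[h_i]$, top blocks $[h_i]\times S_i$ filled from $S_i$), obtained from an outline rectangle via \Cref{outline rectangle to square} and fed to Construction \ref{construction}; your endpoint case $a=(n-1)b$ via \Cref{halving condition} and \Cref{a^n} is also the paper's. However, there is a genuine gap at the crux. Everything in the range $(n-2)b<a<(n-1)b$ rests on the outline rectangle actually existing, and you never exhibit it: you only say you would use ``an explicit circulant-style fill.'' That fill, with its multiplicities $b^2$, $b(a-(n-2)b)$ and $b((n-1)b-a)$ distributed over a $2(n-1)\times 2(n-1)$ array (the paper's \Cref{Gross Array}), together with the verification of the row/column/cell-count conditions, \emph{is} the bulk of the paper's proof; a proposal that defers it has not proved the theorem. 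Moreover, your stated fallbacks cannot close this range: \Cref{halving condition} covers only the single value $a=(n-1)b$, inflation of a $\LC(k^11^{n-1})$ covers only $b\mid a$, and no multiple of $b$ lies strictly between $(n-2)b$ and $(n-1)b$, so for instance $(7^13^3)$ is reached by none of your shortcuts.

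Your alternative induction is also flawed as a substitute. Adjoining a row-, column- and symbol-block of size $b$ to an outline rectangle realizing the $(ab^{n-2})$ structure presupposes the inductive hypothesis at $n-1$ parts, which only holds for $a\leq(n-2)b$; so the induction can never produce any $a$ in the new top range, and the adjunction step itself (a prolongation-type extension forcing the new diagonal block onto $[b]$ while keeping all line counts balanced) is nontrivial and unverified. Note also that for $b<a\leq(n-2)b$ with $n\geq 4$ the induction is unnecessary: a $\LS(ab^{n-1})$ exists there by Theorems \ref{small n squares} and \ref{squaresatmost2}, so \Cref{CubefromSquare} gives the cube immediately --- this is how the paper handles that subrange, leaving the explicit outline rectangle needed only on the open interval $(n-2)b<a<(n-1)b$, which your proposal leaves unconstructed.
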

\begin{proof}
    Suppose that a $\LC(ab^{n-1})$ exists with $a>b$. From \Cref{condition1} we may deduce that $a\leq (n-1)b$. 
    
    Now suppose that this condition is satisfied, and we will show that a $\LC(ab^{n-1})$ exists.

    \begin{itemize}
        \item If $n=3$ then we have already shown existence in \Cref{ab^2}.
        \item We know that a $\LS(ab^{n-1})$ exists for all $b<a\leq (n-2)b$ with $n\geq 4$ from Theorems \ref{small n squares} and \ref{squaresatmost2}. Thus, from \Cref{CubefromSquare} we have existence of the 3-realization.
        \item If $a=(n-1)b$ then existence has already been shown in \Cref{halving condition}.
    \end{itemize}
    
    So suppose that $(n-2)b < a <(n-1)b$.

    We prove existence by contructing a similar outline rectangle $O$ to the one in \Cref{ab^2} and then likewise constructing a latin square and cube from this. We will use the partitions 
    $$P=Q=R = (b^{n-2}(a-b(n-2))^1b^{n-1}),$$ 
    which means that our outline rectangle will be an $2(n-1)\times 2(n-1)$ array on $2(n-1)$ symbols, which we take to be $[2(n-1)]$. We now separate our outline rectangle into four sections, as shown in \Cref{ABCD}, each of which is an $(n-1)\times (n-1)$ array which can have multiple entries per cell.

    \begin{figure}[h]
        \centering

        \begin{tikzpicture}
            \draw (0,0) -- (1.5,0);
            \draw (0,0) -- (0,-1.5);
            \draw (1.5,0) -- (1.5,-1.5);
            \draw (0,-1.5) -- (1.5,-1.5);
            \draw (0,-0.75) -- (1.5,-0.75);
            \draw (0.75,0) -- (0.75,-1.5);
            \node at (0.375,-0.375) {A};
            \node at (1.125,-0.375) {B};
            \node at (0.375,-1.125) {C};
            \node at (1.125,-1.125) {D};
        \end{tikzpicture}
        \caption{}
        \label{ABCD}
    \end{figure}

    We now construct each of these arrays. For the array $D$, let entries of cell $(i,j)\in[n-1]\times[n-1]$ be defined as follows. All values modulo $n-1$ are considered with residues in $[n-1]$; that is, we consider $k(n-1)\mod n-1$ to be $n-1$ instead of $0$.
    \begin{itemize}
        \item $b^2$ copies of symbol $-i+j+1\mod n-1$ if $-i+j\not\equiv n-2\mod n-1$;
        \item $b(a-(n-2)b)$ copies of symbol $n-1$ and $b((n-1)b-a)$ copies of $n-1+i$ if $-i+j \equiv n-2\mod n-1$.
    \end{itemize}

    The array $D$ will be filled as in \Cref{D outline}, where $c=b(a-(n-2)b)$ and $d=b((n-1)b-a)$.

    \begin{figure}[h]
        \centering
        \renewcommand{\arraystretch}{2.7}
        \begin{tabular}{|llcll|}\hline
        $1: b^2$ & \multicolumn{1}{|l|}{$2: b^2$} & $\dots$ & \multicolumn{1}{|l|}{$n-2:b^2$} & \thead{$n-1: c$\\$n:d$} \\\hhline{|-|-|~|-|-|}
        \thead{$n-1: c$\\$n+1:d$} & \multicolumn{1}{|l|}{$1:b^2$} & $\dots$ & \multicolumn{1}{|l|}{$n-3: b^2$} & $n-2: b^2$ \\\hhline{|-|-|~|-|-|}
        \multicolumn{1}{|c}{$\vdots$} & \multicolumn{1}{c}{$\vdots$} & $\ddots$ & \multicolumn{1}{c}{$\vdots$} & \multicolumn{1}{c|}{$\vdots$} \\\hhline{|-|-|~|-|-|}
        $3:b^2$ & \multicolumn{1}{|l|}{$4:b(a-b)$} & $\dots$ & \multicolumn{1}{|l|}{$1:b^2$} & $2:b^2$ \\\hhline{|-|-|~|-|-|}
        $2:b^2$ & \multicolumn{1}{|l|}{$3:b(a-b)$} & $\dots$ & \multicolumn{1}{|l|}{\thead{$n-1: c$\\$2n-2:d$}} & $1:b^2$ \\\hline
        \end{tabular}
        \caption{The array $D$}
        \label{D outline}
    \end{figure}

    For array $B$ we use a similar construction. The entries of cell $(i,j)\in[n-1]\times[n-1]$ are
    \begin{itemize}
        \item $b^2$ copies of symbol $(-i+j+1\mod n-1)+n-1$ if $i\neq n-1$;
        \item $b(a-(n-2)b)$ copies of symbol $(-i+j+1\mod n-1)+n-1$ if $i = n-1$.
    \end{itemize}

    For array $C$, the entries of cell $(i,j)\in[n-1]\times[n-1]$ are
    \begin{itemize}
        \item $b^2$ copies of symbol $(i+j\mod n-1) + n-1$ if $j\neq n-1$;
        \item $b(a-(n-2)b)$ copies of symbol $(i+j\mod n-1) + n-1$ if $j = n-1$.
    \end{itemize}

    For array $A$, the entries of cell $(i,j)\in[n-1]\times[n-1]$ are
    \begin{itemize}
        \item $b(a-(n-2)b)$ copies of symbol $i+j\mod n-1$ if $i = n-1$ or $j=n-1$;
        \item $b^2$ copies of symbol $i+j\mod n-1$ if $i,j < n-1$ and $i+j > n-1$;
        \item $b(a-(n-2)b)$ copies of symbol $i+j$ and $b((n-1)b - a)$ copies of $i+j-1$ if $i+j \leq n-1$.
    \end{itemize}

    Combining these and ignoring repetitions, the outline rectangle will look like the array in \Cref{Gross Array}.

    \begin{figure}[ht]
    \begin{tabular}{|ccccc||ccccc|}
    \hline
    2,1 & \multicolumn{1}{|c|}{3,2} & $\dots$ & \multicolumn{1}{|c|}{$n-1,n-2$} & 1 & $n$ & \multicolumn{1}{|c|}{$n+1$} & $\dots$ & \multicolumn{1}{|c|}{$2n-3$} & $2n-2$\\\hhline{|-|-|~|-|-||-|-|~|-|-|}
    3,2 & \multicolumn{1}{|c|}{4,3} & $\dots$ & \multicolumn{1}{|c|}{1} & 2 & $2n-2$ & \multicolumn{1}{|c|}{$n$} & $\dots$ & \multicolumn{1}{|c|}{$2n-4$} & $2n-3$\\\hhline{|-|-|~|-|-||-|-|~|-|-|}
    $\vdots$ & $\vdots$ & $\ddots$ & $\vdots$ & $\vdots$ & $\vdots$ & $\vdots$ & $\ddots$ & $\vdots$ & $\vdots$\\\hhline{|-|-|~|-|-||-|-|~|-|-|}
    $n-1,n-2$ & \multicolumn{1}{|c|}{1} & $\dots$ & \multicolumn{1}{|c|}{$n-3$} & $n-2$ & $n+2$ & \multicolumn{1}{|c|}{$n+3$} & $\dots$ & \multicolumn{1}{|c|}{$n$} & $n+1$\\\hhline{|-|-|~|-|-||-|-|~|-|-|}
    1 & \multicolumn{1}{|c|}{2} & $\dots$ & \multicolumn{1}{|c|}{$n-2$} & $n-1$ & $n+1$ & \multicolumn{1}{|c|}{$n+2$} & $\dots$ & \multicolumn{1}{|c|}{$2n-2$} & $n$\\\hhline{:=:=:=:=:=::=:=:=:=:=:}
    $n+1$ & \multicolumn{1}{|c|}{$n+2$} & $\dots$ & \multicolumn{1}{|c|}{$2n-2$} & $n$ & 1 & \multicolumn{1}{|c|}{2} & $\dots$ & \multicolumn{1}{|c|}{$n-2$} & $n-1,n$\\\hhline{|-|-|~|-|-||-|-|~|-|-|}
    $n+2$ & \multicolumn{1}{|c|}{$n+3$} & $\dots$ & \multicolumn{1}{|c|}{$n$} & $n+1$ & $n-1,n+1$ & \multicolumn{1}{|c|}{1} & $\dots$ & \multicolumn{1}{|c|}{$n-3$} & $n-2$\\\hhline{|-|-|~|-|-||-|-|~|-|-|}
    $\vdots$ & $\vdots$ & $\ddots$ & $\vdots$ & $\vdots$ & $\vdots$ & $\vdots$ & $\ddots$ & $\vdots$ & $\vdots$\\\hhline{|-|-|~|-|-||-|-|~|-|-|}
    $2n-2$ & \multicolumn{1}{|c|}{$n$} & $\dots$ & \multicolumn{1}{|c|}{$2n-4$} & $2n-3$ & 3 & \multicolumn{1}{|c|}{4} & $\dots$ & \multicolumn{1}{|c|}{$1$} & 2\\\hhline{|-|-|~|-|-||-|-|~|-|-|}
    $n$ & \multicolumn{1}{|c|}{$n+1$} & $\dots$ & \multicolumn{1}{|c|}{$2n-3$} & $2n-2$ & 2 & \multicolumn{1}{|c|}{3} & $\dots$ & \multicolumn{1}{|c|}{$n-1,2n-2$} & 1\\\hline
    \end{tabular}
    \caption{The outline rectangle $O$}
    \label{Gross Array}
    \end{figure}

    The array $O$ satisfies all of the requirements for an outline rectangle, thus we can obtain a latin square $L$ from this array using \Cref{outline rectangle to square}. The subarray $A$ only contains symbols from $[n-1]$, and this symbol set becomes $[a]$ in $L$. Since $A$ contains $a^2$ of these symbols, the subarray $A$ becomes a subsquare of order $a$ in $L$.

    Considering the subarray $B$, the first row of cells each contain $b^2$ copies of a specified symbol. Each of these symbols is unique and represents a disjoint set of $b$ symbols in $L$. Thus, there are $n-1$ symbol-disjoint subsquares of order $b$ in the first $b$ rows of $B$, and they contain disjoint symbols to the subsquare of order $a$.

    In subarray $D$, the main diagonal cells contain only $b^2$ copies of symbol 1. In $L$, each of these becomes a subsquare of order $b$ on the symbols of $[b]$.

    The structure of $L$ is thus similar to a 2-realization of $(a^1b^{n-1})$, except the symbol-disjoint subsquares of order $b$ are moved to the top of the array and are replaced by subsquares with the symbols of $[b]$. For $i\in[n]$, where $h_1 = a$ and $h_i = b$ for $i\geq 2$, let $S_i = (h_1 + \dots + h_{i-1}) + [h_i]$. Then formally, for all $(r,c)\in S_i\times S_i$, we have $L(r,c)\in[h_i]$, and for all $(r,c)\in[h_i]\times S_i$, we have $L(r,c)\in S_i$. This is the same structure as the latin square $L$ in \Cref{ab^2}.

    Thus, we can use \Cref{construction} to build a latin cube $C$, and $C$ will have the required subcubes for a 3-realization by the same proof as given in \Cref{ab^2}.
    
\end{proof}

This completes the existence problem where $u=1$. We now consider the case when $u=2$. For 2-realizations, this case has the same bounds as for $u=1$.

\begin{lemma}
\label{(2b)^2b case}
    A $\LC((2b)^2b^1)$, a $\LC((3b)^2(2b)^1)$ and a $\LC((4b)^2(3b)^1)$ exist for all $b\geq 1$.
\end{lemma}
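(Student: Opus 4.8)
The plan is to factor out the common divisor $b$ and reduce everything to three base cases. Each partition in the statement has the form $((kb)^2((k-1)b)^1)$ for $k\in\{2,3,4\}$, which is exactly the partition $(k^2(k-1)^1)$ with every part multiplied by $b$. So by Construction \ref{inflation}, if I take a $\LC(k^2(k-1)^1)$ and inflate it with any latin cube of order $b$ (one exists for every $b$, e.g.\ a cyclic one), every subcube of order $h_i$ becomes a subcube of order $bh_i$ while disjointness of rows, columns, files and symbols is preserved. Thus it suffices to construct the three latin cubes $\LC(2^21^1)$, $\LC(3^22^1)$ and $\LC(4^23^1)$, of orders $5$, $8$ and $11$ respectively.

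The first base case, $\LC(2^21^1)$, is already exhibited in Example \ref{2^21^1}, so only $\LC(3^22^1)$ and $\LC(4^23^1)$ remain. For these I would build the cubes directly, placing the two equal subcubes and the smaller subcube in normal form along the main diagonal and then completing the rest of the array. Before doing so it is worth noting why the latin-square-plus-Construction \ref{construction} device of \Cref{ab^2} is not available here: that argument needs a latin square whose largest subsquare sits on its own symbols while the remaining subsquares are ``displaced'' to the top rows on disjoint symbols. With two largest parts of equal order $k$ this would force the top $k$ rows to consume all $2k$ columns carrying the two large symbol sets, leaving only the $k-1$ columns of the small part to be filled, in those same $k$ rows, using just the $k-1$ symbols of that part; this is impossible, since each such column would then contain $k$ entries drawn from a set of size $k-1$. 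Consistently, no genuine $\LS(k^2(k-1)^1)$ exists for $k\geq 2$ by \Cref{small n squares}, so a direct construction of the cubes is unavoidable.

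The main obstacle is therefore the explicit construction of the order-$8$ and order-$11$ cubes, together with the verification that they are latin in all three line directions (rows, columns, files) and that the three diagonal subcubes are pairwise disjoint. I would attempt this by fixing the subcube symbol sets $S_1,S_2,S_3$, filling the diagonal blocks with small latin cubes on those symbols, and then extending to the whole cube either by a short computer search or by an explicit pattern based on addition modulo $3k-1$ with local corrections inside the subcube regions. Once the three base cubes are in hand, the inflation step of the first paragraph immediately upgrades them to the stated families for all $b\geq 1$.
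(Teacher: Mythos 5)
Your reduction is exactly the paper's: inflate the three base cubes $\LC(2^21^1)$, $\LC(3^22^1)$ and $\LC(4^23^1)$ by a latin cube of order $b$ via Construction \ref{inflation}, noting that the first base case is already Example \ref{2^21^1}. That step is sound, and your side observation that the latin-square-plus-Construction \ref{construction} device of \Cref{ab^2} is unavailable here is correct and consistent with \Cref{small n squares}, which rules out any $\LS(k^2(k-1)^1)$.

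There is, however, a genuine gap: you never actually produce the order-$8$ and order-$11$ cubes, and for this lemma those two objects are the entire mathematical content beyond the routine inflation. The paper closes exactly this gap by exhibiting both cubes explicitly, layer by layer, in \ref{the cube} and \ref{the cube 2}. Your proposed substitutes do not suffice as written: ``a short computer search'' is a promissory note rather than a construction, and the suggested pattern of addition modulo $3k-1$ with local corrections is not obviously repairable, since the cyclic cube $C(i,j,l)\equiv i+j+l \pmod{3k-1}$ has subcubes only on cosets of subgroups of $\mathbb{Z}_{3k-1}$, and $\mathbb{Z}_8$ has no subgroup of order $3$ while $\mathbb{Z}_{11}$ has none of order $3$ or $4$; so the ``local corrections'' would have to be global in scope, and their existence is precisely what must be proved. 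Until a $\LC(3^22^1)$ and a $\LC(4^23^1)$ are exhibited (or their existence otherwise established), your argument proves only the $\LC((2b)^2b^1)$ family.
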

\begin{proof}
    Observe that a $\LC(2^21^1)$ is given in Example \ref{2^21^1}. To construct a $\LC((2b)^2b^1)$, inflate this latin cube with a latin cube of order $b$ using \Cref{inflation}.

    A $\LC(3^22^1)$ is given in \ref{the cube}. Use inflation to get a $\LC((3b)^2(2b)^1)$. The same process can be used for the $\LC(4^23^1)$ in \ref{the cube 2}
\end{proof}

\begin{lemma}
\label{a^2b^2}
    A $\LC(a^2b^2)$ exists for $a\leq 2b$.
\end{lemma}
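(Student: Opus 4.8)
The plan is to adapt the strategy of \Cref{ab^2}: rather than building the cube directly, I would first construct an auxiliary latin square $L$ of order $N = 2a+2b$ carrying a \emph{shifted} subsquare structure, and then pass to a cube through \Cref{construction}. Note at the outset that a $\LS(a^2b^2)$ does \emph{not} exist (by \Cref{small n squares}, since with parts $(a,a,b,b)$ and $a>b$ neither $h_1=h_2=h_3$ nor $h_2=h_3=h_4$ holds), so \Cref{CubefromSquare} cannot simply be invoked and a genuinely three-dimensional construction is required.

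Write $S_1=[a]$, $S_2=a+[a]$, $S_3=2a+[b]$, $S_4=(2a+b)+[b]$ for the four symbol sets, with $h_1=h_2=a$ and $h_3=h_4=b$. Exactly as in \Cref{ab^2}, it suffices to produce a latin square $L$ with the two properties that, for each $i$, the restriction of $L$ to $S_i\times S_i$ takes values in $[h_i]$ and the restriction of $L$ to $[h_i]\times S_i$ takes values in $S_i$. For then the cube $C(r,c,l)=L(L(r,l),c)$ of \Cref{construction} satisfies $C(r,c,l)\in S_i$ whenever $r,c,l\in S_i$ (because $L(r,l)\in[h_i]$ and hence $L(L(r,l),c)\in S_i$), giving four pairwise disjoint subcubes of orders $a,a,b,b$. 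The whole problem thus reduces to exhibiting such an $L$, and the passage to $C$ is then identical to the verification already carried out in \Cref{ab^2}.

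To build $L$ I would use an outline rectangle together with \Cref{outline rectangle to square}, as in \Cref{ab^2}. I would take $P=Q=R$ to be a refinement of $(a^2b^2)$ that splits off the first $b$ rows and columns of the order-$a$ blocks (the analogue of the partition $(b,a-b,\dots)$ used in \Cref{ab^2}), so that the first $b$ coordinates—which must host the symbol-disjoint order-$b$ subsquares on $S_3$ and $S_4$—form their own blocks. The cells of $O$ would then be filled with explicit symbol multiplicities: the $(S_3,S_3)$ and $(S_4,S_4)$ cells receiving $b^2$ copies of the canonical symbol $1$, the top-row cells receiving the symbol-disjoint order-$b$ blocks on $S_3$ and $S_4$, and the $[2a]\times[2a]$ region filled so as to realize two diagonal order-$a$ subsquares on the canonical symbols $[a]$ together with the displaced copy on $S_2$. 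After verifying the three defining line-count conditions of an outline rectangle and applying \Cref{outline rectangle to square} to obtain $L$, I would finish by invoking \Cref{construction}.

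The main obstacle is the design of $O$. Unlike \Cref{ab^2}, there are now \emph{two} order-$a$ subsquares to place simultaneously, and the canonical set $[b]$ of the small subsquares sits \emph{inside} the canonical set $[a]$ of the large ones, so the multiplicity bookkeeping in the overlapping top-left region is delicate. It is precisely here that $a\le 2b$ should enter: the correction terms needed to balance the row, column and symbol counts will involve factors such as $2b-a$ and $a-b$, and their nonnegativity forces $b\le a\le 2b$, mirroring the cell entries of \Cref{ab^2 outline} in \Cref{ab^2}. Checking that these multiplicities are simultaneously nonnegative integers, satisfy all three outline conditions, and still yield the required block-symbol placement is the crux of the argument; once $L$ is in hand, the construction of the cube and the confirmation of its four subcubes are routine.
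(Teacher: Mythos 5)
Your reduction to a latin square with the shifted subsquare structure is sound in principle, and your observation that \Cref{CubefromSquare} is unavailable (no $\LS(a^2b^2)$ exists for $a>b$) is correct. But the proposal has a genuine gap: everything hinges on the explicit outline rectangle $O$, and you never exhibit it. You describe which cells should receive which canonical blocks and predict that multiplicities like $2b-a$ and $a-b$ will appear, but you yourself identify the simultaneous verification of nonnegativity, the three line-count conditions, and the forced block placement as ``the crux of the argument'' --- and then stop there. Unlike \Cref{ab^2}, where the paper writes down every cell multiplicity and checks the counts, your sketch leaves open precisely the part that could fail: with two order-$a$ subsquares sharing the canonical symbol set $[a]$, and the order-$b$ canonical set $[b]$ nested inside it, it is not evident that a consistent filling exists, and no amount of structural analogy substitutes for producing one. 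As written, this is a plan for a proof, not a proof.

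The gap is all the more avoidable because the lemma follows in two lines from results already established, which is the route the paper takes: for $a=2b$, take a $\LC(a^3)$ (which exists by \Cref{a^n}) and replace one subcube of order $a=2b$ with a $\LC(b^2)$; for $b<a<2b$, note $a<2b<2a$, so a $\LC((2b)^1a^2)$ exists by \Cref{ab^n-1} (the largest part $2b$ satisfies $2b\leq 2a$), and replacing its subcube of order $2b$ with a $\LC(b^2)$ yields the desired $\LC(a^2b^2)$. This subcube-replacement device sidesteps the delicate bookkeeping entirely, since a subcube of a latin cube interacts with the rest of the array only through its row, column, file and symbol sets, and so may be swapped for any latin cube of the same order --- in particular one carrying its own subcube decomposition. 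If you do want to push the direct outline-rectangle construction through, you must actually display the filling and verify the counts; the bound $a\leq 2b$ will indeed surface as nonnegativity of entries, but that needs to be demonstrated, not forecast.
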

\begin{proof}
    If $a=2b$ then take a $\LC(a^3)$ and replace one of the subcubes with a $\LC(b^2)$.

    Suppose that $a<2b$. Since we have that $a<2b<2a$, we know from \Cref{ab^n-1} that a $\LC((2b)^1a^2)$ exists. Take this latin cube and replace the subcube of order $2b$ with a $\LC(b^2)$. This is a $\LC(a^2b^2)$.
\end{proof}

This does not provide a complete solution, since there are partitions for which the 3-realization exists outside of this bound. For example, a $\LC(4^21^2)$ exists, since this can be constructed using a $\LC(4^22^1)$.

The following lemma provides a new upper bound for the case when $u=2$.

\begin{lemma}
\label{a2b(n-2) cond}
    If a $\LC(a^2b^{n-2})$ exists for $n\geq 3$, then $a\leq 2(n-2)b$.
\end{lemma}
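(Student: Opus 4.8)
The plan is to mount a counting argument in the spirit of \Cref{condition1} and \Cref{condition2}, but localized to the region where the two order-$a$ subcubes interact. Place the cube $C$ in normal form and let $I_1, I_2 \subseteq [N]$ be the pairwise disjoint index sets occupied by the subcubes $H_1, H_2$ of order $a$ in each of the three coordinate directions, with symbol sets $S_1, S_2$, and let $\mathcal{B} = S_3 \cup \dots \cup S_n$ collect the $(n-2)b$ symbols of the order-$b$ subcubes; thus $|I_1| = |I_2| = a$ and the remaining $N - 2a = (n-2)b$ indices belong to the order-$b$ subcubes. The first step is to record the exclusions forced by the latin-cube condition: whenever a line (row, column or file) passes through $H_1$ it already contains all of $S_1$, so $S_1$ is forbidden from every cell having two coordinates in $I_1$ (the ``extensions'' of $H_1$), and symmetrically $S_2$ is forbidden from every cell with two coordinates in $I_2$; moreover neither $S_1$ nor $S_2$ can appear inside any order-$b$ subcube.

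Next I would isolate the interaction box $B = I_1 \times I_2 \times [N]$ and split it by the third coordinate into $B_1$ ($j_3 \in I_1$), $B_2$ ($j_3 \in I_2$) and $B_3$ ($j_3$ in the $b$-index set). A cell of $B_1$ is a row-extension of $H_1$, hence carries no symbol of $S_1$; a cell of $B_2$ is a column-extension of $H_2$, hence carries no symbol of $S_2$. Since each of the $a^2$ files of $B$ meets every symbol exactly once, each file carries all $a$ symbols of $S_1$ in its $B_2 \cup B_3$ portion (of size $a + (n-2)b$) and all of $S_2$ in its $B_1 \cup B_3$ portion. Writing $x, y$ for the number of $S_1$- and $S_2$-occurrences landing in $B_2$ and $B_1$ respectively, the $S_1$- and $S_2$-occurrences in $B_3$ occupy distinct cells of that $a^2 s$-cell slab (here $s = (n-2)b$), which gives $(a^3 - x) + (a^3 - y) \le a^2 s$, i.e. $x + y \ge a^2(2a - s)$; the same conclusion follows directly by pigeonhole on the $s$ cells of each file lying in $B_3$.

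The main obstacle --- and the reason the three-dimensional bound is $2(n-2)b$ rather than the two-dimensional $(n-2)b$ of \Cref{squarecondition1} --- is that, unlike in a latin square, \emph{no} cell of the cube is forced to avoid both $S_1$ and $S_2$ at once: forbidding $S_1$ requires two coordinates in $I_1$ and forbidding $S_2$ requires two in $I_2$, which is impossible in three coordinates when $I_1 \cap I_2 = \varnothing$. Consequently the displaced symbols can, a priori, be absorbed by the $b$-symbols $\mathcal{B}$, so the single box $B$ does not by itself close the argument and a naive global count only reproduces \Cref{condition1}. To finish I would run the displacement count in all three coordinate directions simultaneously and pit the resulting lower bounds on how often $S_1, S_2$ are pushed into the $b$-index slabs against the \emph{global budget} of $\mathcal{B}$ (each of the $(n-2)b$ symbols occurs exactly $N^2$ times and is itself barred from the extensions of its own subcube). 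Balancing this forced $S_1 \cup S_2$ demand against the available $\mathcal{B}$-capacity is where the factor of two enters, and making the balance quantitatively tight --- rather than merely recovering $a \le N/2$ --- is the crux; I expect this to be the step demanding the most care.
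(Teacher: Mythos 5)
There is a genuine gap, and it sits exactly where you say the crux lies: the final balancing step is announced, not performed. Your preliminary work is correct --- the exclusions you record are valid, and so is the box inequality $x+y\ge a^2(2a-s)$ --- but that inequality can never produce a contradiction on its own: trivially $x\le a^3$ and $y\le a^3$, while $a^2(2a-s)\le 2a^3$ holds for every $a$ and $s\ge 0$, so the box constraint is satisfiable for all $a$. Your proposed rescue --- running the displacement count in all three directions and charging the displaced $S_1\cup S_2$ occurrences to the global budget of $\mathcal{B}$ --- suffers from the very absorption problem you diagnosed: every count you take runs along full lines of length $N$, and the global supply of $\mathcal{B}$ (each symbol occurring $N^2$ times) is ample, so no region's capacity is ever exhibited as exceeded. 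Two smaller inaccuracies: cells of the order-$b$ subcubes \emph{are} forced to avoid both $S_1$ and $S_2$ (so your ``no cell avoids both'' remark needs the caveat that only small-volume cells do); and \Cref{condition1} is vacuous for $(a^2b^{n-2})$, since $N/2=a+s/2\ge a$ automatically, so the ``naive global count'' benchmark is really \Cref{condition2}.

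The idea your plan is missing, and which the paper's proof supplies, is to track the displacement of the $b$-symbols $\mathcal{B}$ themselves inside a \emph{single two-dimensional layer} through $H_1$, rather than the displacement of $S_1,S_2$ through three-dimensional boxes. Amalgamate symbols via $(a^2s^1)$ and consider the layer $([N],[N],1)$, a latin square of order $N$. Its block $A$ on rows and columns $[a]$ is a layer of $H_1$, hence full of $S_1$; the first $a$ columns carry exactly $as$ occurrences of $\mathcal{B}$, and the block $C$ (rows $2a+[s]$, columns $[a]$) can hold at most $s^2$ of them, so at least $(a-s)s$ are pushed into $B$ (rows $a+[a]$, columns $[a]$), and symmetrically at least $(a-s)s$ into $D$. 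This occupancy caps the room for $S_2$ in $B$ and $D$; and since the central block $G$ (rows and columns $a+[a]$) lies on files through $H_2$, it contains no $S_2$ at all, so of the $a^2$ occurrences of $S_2$ in columns $a+[a]$ at least $(a-s)s$ are forced into $F$, and likewise at least $(a-s)s$ into $C$. But the bottom $s$ rows contain $S_2$ only $as$ times in total, whence $2(a-s)s\le as$, i.e. $a\le 2s=2(n-2)b$ (informative precisely when $a>s$; the paper handles $n=3,4$ separately via \Cref{condition2}). The two structural inputs absent from your proposal are the exclusion of $\mathcal{B}$ from $H_1$ --- which is what sets the cascade in motion --- and the restriction to one layer, where the relevant row- and column-capacities are of size $a$ and $s$ rather than $N$, so the displaced symbols finally have nowhere to go.
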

\begin{proof}
    Firstly, for $n=3$, using the condition from \Cref{condition2} and $D = \{1,2\}$ for $h_1 = h_2 = a$, the condition simplifies to $b\geq a/2$. For $n=4$, taking $D = \{1,2,3\}$ in the condition from \Cref{condition2} gives that $a\leq 4b = 2(n-2)b$. Now consider the case where $n\geq 5$.
    
    We know that a latin square $\LS(a^2b^{n-2})$ exists for $a\leq (n-2)b$ where $n\geq 5$, so the corresponding latin cube also exists. So now we consider $a > (n-2)b$.

    Consider the outline rectangle in \Cref{a<=2(n-2)b figure}, which is an amalgamation of the layer $([N],[N],1)$ of a latin cube $\LC(a^2b^{n-2})$. $A$,$B$,$D$ and $G$ represent $a\times a$ subarrays, while $C$ and $F$ represent $(n-2)b\times a$ subarrays, and $E$ is an $a\times (n-2)b$ subarray. $A$ is within the subcube $H_1$ and $G$ is in the same files as the subcube $H_2$. We have amalgamated the symbols using the partition $(a^2((n-2)b)^1)$.

    \begin{figure}[h]
        \centering

        \begin{tikzpicture}
            \draw (0,0) -- (2,0);
            \draw (0,0) -- (0,-2);
            \draw (1.5,0) -- (1.5,-2);
            \draw (0,-1.5) -- (2,-1.5);
            \draw (0,-0.75) -- (2,-0.75);
            \draw (0.75,0) -- (0.75,-2);
            \draw (0,-2) -- (2,-2);
            \draw (2,0) -- (2,-2);
            \node at (0.375,-0.375) {A};
            \node at (1.125,-0.375) {D};
            \node at (0.375,-1.125) {B};
            \node at (1.125,-1.125) {G};
            \node at (0.375,-1.75) {C};
            \node at (1.125,-1.75) {F};
            \node at (1.75,-0.375) {E};
        \end{tikzpicture}
        \caption{}
        \label{a<=2(n-2)b figure}
    \end{figure}

    Let $x=(n-2)b$. Observe that there must be $ax$ of symbol 3 across $B$ and $C$, but there can be at most $x^2$ of this symbol in $C$. So there must be at least $(a-x)x$ of 3 in $B$ since $a > x$. We can repeat this for cells $D$ and $E$.

    Since there are at least $(a-x)x$ copies of 3 in $D$, there are at most $a^2 - (a-x)x$ of symbol 2 in $D$. Also, since $G$ is in the same files as the subcube $H_2$, $G$ cannot contain any copies of symbol 2. Thus, there must be at least $(a-x)x$ copies of symbol 2 in $F$. This is the same for cells $B$ and $C$.

    Therefore, there are at least $2(a-x)x$ copies of symbol 2 across $C$ and $F$. There can be at most $ax$ of this symbol across those cells, so $2(a-x)x\leq ax$. This simplifies to $a\leq 2x = 2(n-2)b$.
\end{proof}

For $n=3$ and $n=4$, this is the same bound that is achieved using the condition from \Cref{condition2}, by taking $D = \{1,2\}$ and $D = \{1,2,3\}$ respectively. For all greater values of $n$ this is a stricter bound than that condition. For example, the partition $(43^27^3)$ satisfies the conditions from Theorems \ref{condition1} and \ref{condition2}, but not \Cref{a2b(n-2) cond}. This partition is also not excluded by \Cref{halving condition}. Therefore, we can conclude that the three conditions together are still not sufficient for existence.

The existence of a $\LC(a^2b^{n-2})$ is incomplete. Latin squares provide existence for all $a\leq (n-2)b$, where $n\geq 5$, and \Cref{a^2b^2} gives the same result for $n=4$. The following lemma shows that if a $\LC(a^2b^1)$ exists for the entire range given in \Cref{a2b(n-2) cond}, then the remaining existence cases for a $\LC(a^2b^{n-2})$ follow immediately.

\begin{lemma}
    If a $\LC(a^2b^1)$ exists for all $a\leq 2b$ then a $\LC(a^2b^{n-2})$ exists for all $(n-2)b<a\leq 2(n-2)b$, where $n\geq 4$.
\end{lemma}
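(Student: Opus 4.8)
The plan is to realize the partition $a^2b^{n-2}$ by first realizing the coarser partition $a^2c^1$, where $c=(n-2)b$ is the total order occupied by the small subcubes, and then carving that single subcube of order $c$ into the required $n-2$ subcubes of order $b$. This reuses exactly the subcube-\emph{replacement} idea already exploited in \Cref{halving condition} and \Cref{a^2b^2}.

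Concretely, I would set $c=(n-2)b$ and note that, since $n\geq 4$, we have $c\geq 2b$, and the target range $(n-2)b<a\leq 2(n-2)b$ is simply $c<a\leq 2c$. In particular $a>c$, so $(a^2c^1)$ is a genuine non-increasing partition with $a$ as its largest part, and the inequality $a\leq 2c$ is precisely the bound ``$a\leq 2\times(\text{small part})$'' appearing in the assumed existence statement for the two-large-one-small case. Applying that hypothesis with the small part taken to be $c$ therefore produces a $\LC(a^2c^1)=\LC(a^2((n-2)b)^1)$ of order $2a+(n-2)b$, for every $a$ in the range $c<a\leq 2c$.

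The second step is to \emph{replace} the subcube of order $c$. By \Cref{a^n} a $\LC(b^{n-2})$ exists (here $n-2\geq 2$); it is a latin cube of order $c=(n-2)b$ carrying $n-2$ pairwise disjoint subcubes of order $b$. The order-$c$ subcube $H$ inside the $\LC(a^2c^1)$ occupies a set of $c$ indices in each direction and, within its own rows, columns and files, uses exactly a fixed set of $c$ symbols and no others. Relabelling $\LC(b^{n-2})$ to use precisely those $c$ symbols and overwriting the entries of $H$ with it leaves every line of the cube a permutation of the full symbol set, so the array is still a latin cube; the two subcubes of order $a$ are untouched while $H$ is now subdivided into $n-2$ subcubes of order $b$. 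The result is a $\LC(a^2b^{n-2})$ for all $(n-2)b<a\leq 2(n-2)b$, as required.

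I do not anticipate a serious obstacle, since both ingredients are already in hand: the conditional existence of $\LC(a^2c^1)$ supplied by the hypothesis (instantiated at small part $c=(n-2)b$), and the replacement operation validated in \Cref{halving condition}. The only genuine points of care are the parameter bookkeeping that turns the hypothesis's bound into the target bound $a\leq 2(n-2)b$ after setting the small part to $c$, and a clean argument that overwriting a subcube by another latin cube on the same symbol set preserves the latin property and installs the desired subcubes of order $b$.
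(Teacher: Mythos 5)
Your proposal is correct and is essentially identical to the paper's proof: both instantiate the hypothesis with small part $c=(n-2)b$ to obtain a $\LC(a^2((n-2)b)^1)$ for $(n-2)b<a\leq 2(n-2)b$, then replace the order-$(n-2)b$ subcube with a $\LC(b^{n-2})$ (which exists by the $\LC(a^n)$ theorem). Your added care about the symbol set and the validity of the replacement operation only makes explicit what the paper leaves implicit.
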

\begin{proof}
    Take any $a$ such that $(n-2)b<a\leq 2(n-2)b$. Then there exists a $\LC(a^2((n-2)b)^1)$. Replacing the subsquare of order $(n-2)b$ with a $\LC(b^{n-2})$ gives a $\LC(a^2b^{n-2})$.
\end{proof}

\section{$m$-dimensional hypercubes}

This idea of a realization of a partition can be extended to higher dimensions.

\begin{definition}
    An \emph{m-dimensional latin hypercube} $L = (L_{j_1,\dots,j_m})$ of order $N$ is an $m$-dimensional array of side $N$ on a symbol set of size $N$, such that for all $(j_1,\dots,j_m),(j'_1,\dots,j'_m)\in S^m$, and all $\alpha\in\{1,\dots,m\}$, if $j_i = j'_i$ for all $i\neq\alpha$ and $j_\alpha\neq j'_\alpha$ then $L_{j_1,\dots,j_m} \neq L_{j'_1,\dots,j'_m}$.
    
    As before, a \emph{subhypercube} of $L$ is an $m$-dimensional sub-array which is itself a latin hypercube.
\end{definition}

\begin{definition}
    For an integer partition $P = (h_1,h_2,\dots,h_n)$ of $N$ with $h_1\geq h_2\geq \dots\geq h_n > 0$, an \emph{m-realization} of $P$, denoted $m\RP(h_1h_2\dots h_n)$, is an $m$-dimensional latin hypercube of order $N$ with pairwise disjoint subhypercubes of order $h_i$ for each $i\in [n]$.
\end{definition}

\begin{theorem}
\label{combine}
    If there exists an $m\RP(h_1\dots h_n)$ and a $k\RP(h_1\dots h_n)$, then there exists an $(m+k-1)\RP(h_1\dots h_n)$.
\end{theorem}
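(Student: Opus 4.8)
The plan is to generalize \Cref{construction} by \emph{composing} the two given hypercubes. Write $M$ for the $m$-realization and $K$ for the $k$-realization, both of order $N$, and regard $M(z_1,\dots,z_m)$ and $K(w_1,\dots,w_k)$ as returning the symbol in the indicated cell. I define an $(m+k-1)$-dimensional array $H$ of order $N$ by substituting the value of $K$ into one fixed coordinate of $M$; for definiteness I take the last, so that
$$H(x_1,\dots,x_{m-1},y_1,\dots,y_k) = M\bigl(x_1,\dots,x_{m-1},\,K(y_1,\dots,y_k)\bigr).$$
Here $m-1$ arguments are fed into $M$ directly and the remaining slot of $M$ is supplied by the $k$-ary value of $K$, giving $(m-1)+k = m+k-1$ free arguments in total. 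Taking $m=k=2$ and $M=K=L$ recovers, after a relabelling of coordinates, the cube $C(r,c,l)=L(L(r,l),c)$ of \Cref{construction}, so the theorem is the natural common generalization of that construction.

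As in \Cref{CubefromSquare}, I would first put both $M$ and $K$ into normal form, so that for every $i$ the $i$-th subhypercube occupies the index block $S_i=(h_1+\dots+h_{i-1})+[h_i]$ in each coordinate and carries exactly the symbols of $S_i$. This is legitimate because permuting the index set of each coordinate and relabelling the symbols preserves both the latin and the subhypercube conditions, and the blocks $S_i$ depend only on the partition, so the two normal forms are automatically compatible. I would then check that $H$ is a latin hypercube, i.e.\ that fixing all but one of its $m+k-1$ arguments yields a bijection in the free one. If the free argument is some $x_t$, then $K(y_1,\dots,y_k)$ is a fixed value $z$ and $H=M(x_1,\dots,x_{m-1},z)$ varies bijectively in $x_t$ because $M$ is latin. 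If the free argument is some $y_s$, then with the other $y$'s fixed the value $z=K(y_1,\dots,y_k)$ runs bijectively over $[N]$ since $K$ is latin, and with $x_1,\dots,x_{m-1}$ fixed the map $z\mapsto M(x_1,\dots,x_{m-1},z)$ is also a bijection, so the composite $y_s\mapsto H$ is a bijection.

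For the subhypercube structure I would restrict all $m+k-1$ coordinates to a single block $S_i$: by $K$'s normal form the inner value $K(y_1,\dots,y_k)\in S_i$, and then by $M$'s normal form $H\in S_i$. Hence $H$ restricted to $S_i^{\,m+k-1}$ is a latin hypercube of order $h_i$ on the symbols of $S_i$ (its lines have distinct symbols by inheritance, and there are $h_i$ of them lying in a set of size $h_i$). Distinct blocks occupy disjoint index ranges in every coordinate and carry disjoint symbol sets, so they are pairwise disjoint, yielding the required $n$ subhypercubes and completing the construction.

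I expect the only delicate point to be the second case of the latin check, the variation in a $y$-coordinate, since this is the one place where the two hypercubes genuinely interact and where one must combine bijectivity of $K$ in a single argument with bijectivity of $M$ in the substituted coordinate; the decision to feed $K$ into a single \emph{fixed} slot of $M$ is exactly what keeps every other direction a direct consequence of $M$ or $K$ being latin, and what keeps the subcube argument a clean two-step inclusion $S_i^{\,m+k-1}\to S_i\to S_i$. The degenerate cases $m=1$ or $k=1$ are immediate, since a $1$-realization is just a block-preserving permutation and composing it into one coordinate leaves the realization structure intact.
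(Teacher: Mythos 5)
Your proposal is correct and is essentially the paper's own proof: the paper likewise composes the two realizations by substituting the $k$-realization's value into one fixed coordinate of the $m$-realization (the first rather than your last, an immaterial relabelling), verifies the latin property by the same two-case analysis on which coordinate varies, and obtains the subhypercubes by the same two-step inclusion $S_i^{m+k-1}\to S_i\to S_i$ after putting both hypercubes in normal form. Your bijectivity phrasing of the latin check is just the contrapositive of the paper's injectivity argument, so there is nothing substantively different to compare.
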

\begin{proof}
    Let $F$ be a latin hypercube which gives an $m\RP(h_1\dots h_n)$ and let $G$ be a latin hypercube which gives a $k\RP(h_1\dots h_n)$. We will assume that these are in normal form and with symbols in increasing order across the subhypercubes.

    Let $f(x_1,\dots,x_m)$ be an entry of $F$ and $g(x_1,\dots,x_k)$ be an entry of $G$. For $l = k + m - 1$, we will construct an $l$-dimensional hypercube, $H$, by $$h(x_1,\dots,x_l) = f(g(x_1,\dots,x_k),x_{k+1},\dots,x_l).$$

    Suppose that $h(x_1,\dots,x_i,\dots,x_l) = h(x_1,\dots,x_i',\dots,x_l)$ for some $i\in[l]$. If $i>k$ then $$f(g(x_1,\dots,x_k),x_{k+1},\dots,x_i,\dots,x_l) = f(g(x_1,\dots,x_k),x_{k+1},\dots,x_i',\dots,x_l).$$ Since $F$ is a latin hypercube, we must have that $x_i = x_i'$.

    If $i\leq k$, then $$f(g(x_1,\dots,x_i,\dots,x_{k}),x_{k+1},\dots,x_l) = f(g(x_1,\dots,x_i',\dots,x_{k}),x_{k+1},\dots,x_l).$$ This gives that $g(x_1,\dots,x_i,\dots,x_{k}) = g(x_1,\dots,x_i',\dots,x_{k})$, and since $G$ is a latin hypercube, $x_i = x_i'$. Therefore $H$ is a latin hypercube.

    We now show that this $H$ contains the required subhypercubes. Let $S_i = (h_1 + \dots + h_{i-1}) + [h_i]$ for some part $h_i$. Given our earlier assumptions about $F$ and $G$, we know that if $(x_1,\dots,x_m)\in S_i\times S_i$ then $f(x_1,\dots,x_m)\in S_i$, and if $x_1,\dots,x_k\in S_i$ then $g(x_1,\dots,x_k)\in S_i$.

    So if $x_1,\dots,x_l\in S_i$ for some $i$, then $g(x_1,\dots,x_k) = a \in S_i$. Thus, $h(x_1,\dots,x_l) = f(a,x_{k+1},\dots,x_l)\in S_i$. Therefore there are subhypercubes found along the diagonal with the symbols in increasing order. Thus, $H$ is an $(m+k-1)\RP(h_1\dots h_n)$.
\end{proof}

We now prove an analogue of \Cref{CubefromSquare} for a hypercube of any dimension.

\begin{corollary}
    If there exists a $\LS(h_1\dots h_n)$, then there exists an $m\RP(h_1\dots h_n)$ for all $m\geq 3$.
\end{corollary}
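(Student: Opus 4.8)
The corollary asks to show that a $\LS(h_1\dots h_n)$ guarantees an $m\RP(h_1\dots h_n)$ for all $m\geq 3$. The natural strategy is to combine the two main results already proved in the excerpt: \Cref{CubefromSquare}, which upgrades a 2-realization to a 3-realization, and \Cref{combine}, which merges an $m$-realization and a $k$-realization into an $(m+k-1)$-realization. The plan is to induct on $m$, using the 3-realization as both the base case and the repeated ingredient.

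First I would invoke \Cref{CubefromSquare}: since a $\LS(h_1\dots h_n)$ exists, a $\LC(h_1\dots h_n)$ exists, i.e.\ a $3\RP(h_1\dots h_n)$. This settles the base case $m=3$ immediately and, crucially, gives me a single fixed 3-realization that I can feed into \Cref{combine} as many times as needed. For the inductive step, I would assume an $m\RP(h_1\dots h_n)$ exists for some $m\geq 3$ and apply \Cref{combine} with this $m$-realization together with the $3$-realization from the base case. Taking $k=3$ in \Cref{combine} yields an $(m+3-1)\RP = (m+2)\RP(h_1\dots h_n)$.

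The only subtlety is that stepping by $k-1 = 2$ each time advances the dimension in increments of two, so a naive induction reaches only $m = 3, 5, 7, \dots$ (the odd dimensions) or only the even dimensions, depending on where one starts. To cover every $m\geq 3$ I would run two interleaved inductions, or equivalently establish both a base case at $m=3$ (odd) and a base case at $m=4$ (even). The $m=4$ base case follows from one application of \Cref{combine} to two copies of the $3$-realization: $3 + 3 - 1 = 5$ is odd, which does not help, so instead I obtain $m=4$ by combining the $3$-realization with a $2$-realization. Here I must be careful, because a $\LS(h_1\dots h_n)$ is a genuine $2\RP(h_1\dots h_n)$ by definition, so \Cref{combine} with $m=3$, $k=2$ gives $3+2-1 = 4$ directly. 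This is the step I expect to require the most care in the write-up: confirming that the latin square itself qualifies as the $k=2$ input to \Cref{combine}, and that its normal-form and symbol-ordering hypotheses are met.

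With both parity base cases in hand ($m=3$ from \Cref{CubefromSquare} and $m=4$ from \Cref{combine} applied to the square and the cube), the inductive step adds $2$ to the dimension at each stage via \Cref{combine} with $k=3$, so every integer $m\geq 3$ is reached. I would conclude by stating that the construction is uniform in $m$ and that the subhypercube structure is preserved at each step precisely because \Cref{combine} already verifies it, leaving no additional computation to perform.
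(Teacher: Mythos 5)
Your proof is correct and rests on the same two ingredients as the paper's own argument: \Cref{CubefromSquare} for the base case and \Cref{combine} for the inductive step. The only difference is organisational. Where you run a two-parity induction stepping by $2$ (taking $k=3$ in \Cref{combine}) from base cases $m=3$ and $m=4$, the paper simply observes that the given latin square is itself a $2$-realization and applies \Cref{combine} with the square as one input at every step, since $2+m-1=m+1$; this advances the dimension by $1$ each time and needs only the single base case $m=3$. The subtlety you flagged --- whether the latin square legitimately serves as the $k=2$ input to \Cref{combine} --- resolves exactly as you hoped (a $2$-dimensional latin hypercube is a latin square, and the construction $h(x_1,\dots,x_l)=f(g(x_1,x_2),x_3,\dots,x_l)$ goes through verbatim), and in fact the paper's entire induction runs through that very observation, which renders your $m=4$ base case and the parity split unnecessary, though harmless.
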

\begin{proof}
    The proof for $m=3$ is given in \Cref{CubefromSquare}. Now suppose that the statement is true for some $m$.

    So we have a latin square $L$ which gives a $\LS(h_1\dots h_n)$ and a latin hypercube, $G$, which gives an $m\RP(h_1\dots h_n)$. By \Cref{combine}, there exists an $(m+1)\RP(h_1\dots h_n)$ since $2+m-1 = m+1$.

    Therefore, we have proven the result by induction.
\end{proof}

The question that naturally follows from this is whether the existence of an $m\RP(h_1\dots h_n)$ implies the existence of an $(m+1)\RP(h_1\dots h_n)$. This can be easily disproven in the general case (for $m\geq 3$) with the partition $(1^2)$. There is no 2-realization $\LS(1^2)$ but there is a $\LC(1^2)$, and it can be easily seen that there is no $4\RP(1^2)$. In fact, there is no $m\RP(1^2)$ for any even $m$.

The following theorem is a further generalisation, which starts with an $m$-realization.

\begin{corollary}
\label{inter-dimension thing}
    If there exists an $m\RP(h_1\dots h_n)$, then there exists a $k\RP(h_1\dots h_n)$ for all $k\geq m\geq 2$ with $k\equiv 1\pmod{m-1}$.
\end{corollary}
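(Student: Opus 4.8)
The plan is to iterate \Cref{combine}, treating the given $m\RP(h_1\dots h_n)$ as a fixed building block that we repeatedly combine against realizations already produced. First I would record the underlying arithmetic: the integers $k\geq m$ with $k\equiv 1\pmod{m-1}$ are exactly those of the form $k = m + j(m-1)$ for $j\geq 0$. Indeed $m = 1 + (m-1)\equiv 1\pmod{m-1}$, so $m$ is the smallest admissible value, and every larger admissible value differs from the previous one by $m-1$.

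With this in hand, I would argue by induction on $j$. The base case $j=0$ gives $k=m$, which holds by hypothesis. For the inductive step, suppose a $k\RP(h_1\dots h_n)$ exists with $k = m + j(m-1)$. Applying \Cref{combine} to this realization together with the original $m\RP(h_1\dots h_n)$ yields an $(m+k-1)\RP(h_1\dots h_n)$. The key identity is $m + k - 1 = k + (m-1) = m + (j+1)(m-1)$, so the output is precisely the next admissible dimension, and the induction proceeds.

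The only point needing care is that this procedure reaches \emph{every} admissible $k$ without skipping one: each application of \Cref{combine} advances the dimension by exactly $m-1$, which is exactly the gap between consecutive elements of the class $1\pmod{m-1}$, so no admissible value is missed and none is overshot. I do not expect a genuine obstacle here; all the substance lives in \Cref{combine}, and this corollary is the bookkeeping consequence of applying that theorem repeatedly along the arithmetic progression generated by the single seed dimension $m$.
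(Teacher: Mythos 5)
Your proof is correct and is essentially the paper's own argument: the paper also inducts along the progression $k = k'(m-1)+1$ (your $k = m + j(m-1)$ is the same parametrization with $k' = j+1$), with base case $k=m$ and inductive step given by applying \Cref{combine} to the fixed $m$-realization and the current $k$-realization to advance the dimension by exactly $m-1$.
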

\begin{proof}
    If $k\equiv 1\pmod{m-1}$ then $k=k'(m-1) + 1$ for some $k'\in\mathbb{Z}^+$.

    When $k' = 1$, $k=m$ and the statement is true.

    Suppose that it is true for some $k'\geq 1$. Then there is a latin hypercube $F$ which gives an $m\RP(h_1\dots h_n)$ and a latin hypercube $G$ which gives a $k\RP(h_1\dots h_n)$. Since $l = (k'+1)(m-1) + 1 = k + m - 1$, there exists an $l\RP(h_1\dots h_n)$ by \Cref{combine}.
\end{proof}

\begin{lemma}
    An $m\RP(1^2)$ exists if and only if $m$ is even.
\end{lemma}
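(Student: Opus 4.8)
The plan is to settle the exact parity condition for existence of an $m\RP(1^2)$ by first exploiting the extreme rigidity of order-$2$ latin hypercubes and then converting the disjointness requirement on the two order-$1$ subhypercubes into a single parity count. The whole argument reduces to understanding the structure of hypercubes on just two symbols.

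First I would classify all $m$-dimensional latin hypercubes of order $2$. Identify the symbol set $\{1,2\}$ with $\mathbb{F}_2$ via $s\mapsto s-1$ and each coordinate value $x_i\in\{1,2\}$ with $x_i-1\in\{0,1\}$. The defining condition says that along every axis-parallel line the two entries are distinct, hence form $\{0,1\}$; equivalently, flipping any single coordinate must flip the symbol. Thus the discrete derivative of $L$ in each of the $m$ directions is identically $1$, which forces the affine form $L(x_1,\dots,x_m)\equiv (x_1-1)+\dots+(x_m-1)+c\pmod 2$ for a constant $c\in\{0,1\}$. There are therefore exactly two order-$2$ latin hypercubes, and the key fact I would record is that the symbol occupying a cell is governed by the parity of the number of coordinates equal to $2$.

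Next I would analyse when two order-$1$ subhypercubes, located in cells $p$ and $q$, can be pairwise disjoint. Disjointness (as in the normal form, where the subhypercubes occupy disjoint index ranges in every direction and carry disjoint symbol sets) requires $p_i\neq q_i$ for every coordinate $i$; for order $2$ this forces $q$ to be the antipode of $p$, that is $q_i=3-p_i$. Passing from $p$ to its antipode flips all $m$ coordinates, and by the parity formula above each flip toggles the symbol. Hence the symbols of $p$ and $q$ differ precisely when $m$ is odd, and they are forced to coincide—violating symbol-disjointness—exactly when $m$ is even.

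Combining the two steps yields the characterisation: a disjoint pair of order-$1$ subhypercubes can be placed in some order-$2$ latin hypercube if and only if $m$ is odd, so an $m\RP(1^2)$ exists exactly in that case. I would close by cross-checking the two smallest instances against the paper, which pins the parity down unambiguously: for $m=2$ this recovers the nonexistence of an $\LS(1^2)$ already recorded in \Cref{small n squares}, while for $m=3$ it matches the explicit $\LC(1^2)$ exhibited in \autoref{fig:1,1 cube}. The only genuinely delicate point is the disjointness step, namely justifying that two disjoint order-$1$ subhypercubes must be antipodal (and not merely avoid a common line); once that is granted, the affine classification together with a one-line parity count finishes the proof.
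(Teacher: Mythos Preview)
Your proof is correct and follows essentially the same route as the paper: both derive the parity formula for order-$2$ latin hypercubes (each coordinate flip toggles the symbol) and then reduce the disjointness of the two singleton subhypercubes to checking that antipodal corners carry distinct symbols, which happens precisely when $m$ is odd. Note that you have in fact proved the correct statement---existence holds if and only if $m$ is \emph{odd}---which is also what the paper's own proof establishes; the word ``even'' in the lemma as printed is a typo, since the paper's argument itself concludes ``Thus, $m$ must be odd'' and then builds an $m\RP(1^2)$ for every odd $m$ from the $\LC(1^2)$ via \Cref{inter-dimension thing} (whereas your classification already exhibits the required hypercube directly).
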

\begin{proof}
    Suppose such a hypercube exists and the entries are represented by $f(x_1,\dots,x_m)$. Without loss of generality, the subhypercubes must be filled as $f(1,\dots,1) = 1$ and $f(2,\dots,2) = 2$. For every cell with exactly one $x_i=2$, we know that $f(x_1,\dots,x_m)=2$ since it is in a line with the first subhypercube. Repeating this, for every cell of the hypercube, we get that
    $$f(x_1,\dots,x_m) = \begin{cases}
        1, & \text{if $x_i=2$ for an even number of $i$,}\\
        2, & \text{otherwise.}
    \end{cases}$$
    If $m$ is even, then the second subhypercube must contain $1$ by this process. Thus, $m$ must be odd.

    Since a $\LC(1^2)$ exists, we can use \Cref{inter-dimension thing} to find an $m\RP(1^2)$ for any odd $m$.
\end{proof}

We have shown that many partitions can be realized in latin cubes which cannot in latin squares. We note here that although these realizations cannot all be extended to a $4$-realization, there are some partitions which exist in $4$-dimensional hypercubes but not latin squares. One such hypercube is given in \ref{4D}.

\appendix
\section{}

\begin{append}
    \label{the cube}

The following arrays represent the layers of a $\LC(3^22^1)$.
$$\begin{matrix}
    \scalebox{0.9}{
    $\arraycolsep=4pt\begin{array}{|cccccccc|} \hline \cellcolor{lightgray}3 & \cellcolor{lightgray}1 & \cellcolor{lightgray}2 & 5 & 6 & 4 & 8 & 7\\
    \cellcolor{lightgray}1 & \cellcolor{lightgray}2 & \cellcolor{lightgray}3 & 8 & 4 & 5 & 7 & 6\\
    \cellcolor{lightgray}2 & \cellcolor{lightgray}3 & \cellcolor{lightgray}1 & 4 & 7 & 6 & 5 & 8\\ 4 & 8 & 6 & 2 & 3 & 7 & 1 & 5\\ 5 & 6 & 4 & 7 & 8 & 2 & 3 & 1\\ 6 & 5 & 7 & 3 & 1 & 8 & 2 & 4\\ 8 & 7 & 5 & 6 & 2 & 1 & 4 & 3\\ 7 & 4 & 8 & 1 & 5 & 3 & 6 & 2\\ \hline\end{array}$\quad
    $\arraycolsep=4pt\begin{array}{|cccccccc|} \hline \cellcolor{lightgray}2 & \cellcolor{lightgray}3 & \cellcolor{lightgray}1 & 6 & 4 & 8 & 7 & 5\\
    \cellcolor{lightgray}3 & \cellcolor{lightgray}1 & \cellcolor{lightgray}2 & 7 & 5 & 6 & 4 & 8\\
    \cellcolor{lightgray}1 & \cellcolor{lightgray}2 & \cellcolor{lightgray}3 & 5 & 6 & 4 & 8 & 7\\ 7 & 4 & 5 & 3 & 8 & 1 & 6 & 2\\ 4 & 5 & 8 & 1 & 7 & 3 & 2 & 6\\ 5 & 6 & 4 & 8 & 2 & 7 & 1 & 3\\ 6 & 8 & 7 & 2 & 1 & 5 & 3 & 4\\ 8 & 7 & 6 & 4 & 3 & 2 & 5 & 1\\ \hline \end{array}$\quad
    $\arraycolsep=4pt\begin{array}{|cccccccc|} \hline \cellcolor{lightgray}1 & \cellcolor{lightgray}2 & \cellcolor{lightgray}3 & 4 & 5 & 7 & 6 & 8\\
    \cellcolor{lightgray}2 & \cellcolor{lightgray}3 & \cellcolor{lightgray}1 & 5 & 6 & 4 & 8 & 7\\
    \cellcolor{lightgray}3 & \cellcolor{lightgray}1 & \cellcolor{lightgray}2 & 6 & 8 & 5 & 7 & 4\\ 5 & 6 & 4 & 8 & 7 & 2 & 3 & 1\\ 6 & 7 & 5 & 2 & 1 & 8 & 4 & 3\\ 8 & 4 & 6 & 7 & 3 & 1 & 5 & 2\\ 7 & 5 & 8 & 1 & 4 & 3 & 2 & 6\\ 4 & 8 & 7 & 3 & 2 & 6 & 1 & 5\\ \hline \end{array}$\quad
    $\arraycolsep=4pt\begin{array}{|cccccccc|} \hline 4 & 7 & 5 & 8 & 3 & 2 & 1 & 6\\ 5 & 8 & 4 & 2 & 1 & 7 & 6 & 3\\
    8 & 6 & 7 & 1 & 2 & 3 & 4 & 5\\
    1 & 3 & 8 & \cellcolor{lightgray}6 & \cellcolor{lightgray}5 & \cellcolor{lightgray}4 & 2 & 7\\
    7 & 1 & 3 & \cellcolor{lightgray}5 & \cellcolor{lightgray}4 & \cellcolor{lightgray}6 & 8 & 2\\
    3 & 2 & 1 & \cellcolor{lightgray}4 & \cellcolor{lightgray}6 & \cellcolor{lightgray}5 & 7 & 8\\
    2 & 4 & 6 & 3 & 7 & 8 & 5 & 1\\
    6 & 5 & 2 & 7 & 8 & 1 & 3 & 4\\ \hline \end{array}$}\\\\
    \scalebox{0.9}{\;$\arraycolsep=4pt\begin{array}{|cccccccc|} \hline 7 & 8 & 6 & 3 & 2 & 1 & 5 & 4\\
    6 & 4 & 7 & 1 & 8 & 3 & 2 & 5\\
    4 & 5 & 8 & 7 & 1 & 2 & 3 & 6\\
    3 & 2 & 1 & \cellcolor{lightgray}5 & \cellcolor{lightgray}4 & \cellcolor{lightgray}6 & 7 & 8\\
    8 & 3 & 2 & \cellcolor{lightgray}4 & \cellcolor{lightgray}6 & \cellcolor{lightgray}5 & 1 & 7\\
    2 & 7 & 3 & \cellcolor{lightgray}6 & \cellcolor{lightgray}5 & \cellcolor{lightgray}4 & 8 & 1\\
    5 & 1 & 4 & 8 & 3 & 7 & 6 & 2\\
    1 & 6 & 5 & 2 & 7 & 8 & 4 & 3\\ \hline \end{array}$\quad
    $\arraycolsep=4pt\begin{array}{|cccccccc|} \hline 6 & 5 & 8 & 1 & 7 & 3 & 4 & 2\\ 8 & 7 & 6 & 3 & 2 & 1 & 5 & 4\\
    7 & 4 & 5 & 2 & 3 & 8 & 6 & 1\\
    2 & 1 & 7 & \cellcolor{lightgray}4 & \cellcolor{lightgray}6 & \cellcolor{lightgray}5 & 8 & 3\\
    3 & 2 & 1 & \cellcolor{lightgray}6 & \cellcolor{lightgray}5 & \cellcolor{lightgray}4 & 7 & 8\\
    1 & 8 & 2 & \cellcolor{lightgray}5 & \cellcolor{lightgray}4 & \cellcolor{lightgray}6 & 3 & 7\\
    4 & 6 & 3 & 7 & 8 & 2 & 1 & 5\\
    5 & 3 & 4 & 8 & 1 & 7 & 2 & 6\\ \hline \end{array}$\quad
    $\arraycolsep=4pt\begin{array}{|cccccccc|} \hline 8 & 6 & 4 & 7 & 1 & 5 & 2 & 3\\ 4 & 5 & 8 & 6 & 7 & 2 & 3 & 1\\ 5 & 8 & 6 & 3 & 4 & 7 & 1 & 2\\ 6 & 7 & 3 & 1 & 2 & 8 & 5 & 4\\ 2 & 4 & 7 & 8 & 3 & 1 & 6 & 5\\ 7 & 1 & 5 & 2 & 8 & 3 & 4 & 6\\
    1 & 3 & 2 & 4 & 5 & 6 & \cellcolor{lightgray}7 & \cellcolor{lightgray}8\\
    3 & 2 & 1 & 5 & 6 & 4 & \cellcolor{lightgray}8 & \cellcolor{lightgray}7\\ \hline \end{array}$\quad
    $\arraycolsep=4pt\begin{array}{|cccccccc|} \hline 5 & 4 & 7 & 2 & 8 & 6 & 3 & 1\\ 7 & 6 & 5 & 4 & 3 & 8 & 1 & 2\\ 6 & 7 & 4 & 8 & 5 & 1 & 2 & 3\\ 8 & 5 & 2 & 7 & 1 & 3 & 4 & 6\\ 1 & 8 & 6 & 3 & 2 & 7 & 5 & 4\\ 4 & 3 & 8 & 1 & 7 & 2 & 6 & 5\\
    3 & 2 & 1 & 5 & 6 & 4 & \cellcolor{lightgray}8 & \cellcolor{lightgray}7\\
    2 & 1 & 3 & 6 & 4 & 5 & \cellcolor{lightgray}7 & \cellcolor{lightgray}8\\ \hline \end{array}$}
\end{matrix}$$
\end{append}

\begin{append}
\label{the cube 2}
The following arrays represent the layers of a $\LC(4^23^1)$.

\vspace{0.25cm}

\centering
    \scalebox{0.8}{
    $\arraycolsep=1pt\begin{array}{|ccccccccccc|} \hline
    \cellcolor{lightgray}1 & \cellcolor{lightgray}2 & \cellcolor{lightgray}3 & \cellcolor{lightgray}4 & 5 & 6 & 7 & 8 & 11 & 9 & 10\\
    \cellcolor{lightgray}2 & \cellcolor{lightgray}3 & \cellcolor{lightgray}4 & \cellcolor{lightgray}1 & 6 & 7 & 8 & 5 & 9 & 10 & 11\\
    \cellcolor{lightgray}3 & \cellcolor{lightgray}4 & \cellcolor{lightgray}1 & \cellcolor{lightgray}2 & 7 & 8 & 5 & 6 & 10 & 11 & 9\\
    \cellcolor{lightgray}4 & \cellcolor{lightgray}1 & \cellcolor{lightgray}2 & \cellcolor{lightgray}3 & 10 & 11 & 9 & 7 & 5 & 8 & 6\\
    5 & 6 & 7 & 10 & 11 & 4 & 3 & 9 & 8 & 2 & 1\\
    6 & 7 & 8 & 5 & 2 & 9 & 10 & 11 & 1 & 4 & 3\\
    7 & 8 & 5 & 11 & 9 & 10 & 1 & 4 & 3 & 6 & 2\\
    8 & 5 & 6 & 9 & 4 & 1 & 11 & 10 & 2 & 3 & 7\\
    11 & 9 & 10 & 8 & 3 & 5 & 2 & 1 & 6 & 7 & 4\\
    9 & 10 & 11 & 6 & 8 & 2 & 4 & 3 & 7 & 1 & 5\\
    10 & 11 & 9 & 7 & 1 & 3 & 6 & 2 & 4 & 5 & 8\\\hline
    \end{array}$\quad
    $\arraycolsep=1pt\begin{array}{|ccccccccccc|} \hline
\cellcolor{lightgray}4 & \cellcolor{lightgray}1 & \cellcolor{lightgray}2 & \cellcolor{lightgray}3 & 8 & 10 & 11 & 9 & 6 & 5 & 7\\
\cellcolor{lightgray}1 & \cellcolor{lightgray}2 & \cellcolor{lightgray}3 & \cellcolor{lightgray}4 & 5 & 6 & 7 & 8 & 11 & 9 & 10\\
\cellcolor{lightgray}2 & \cellcolor{lightgray}3 & \cellcolor{lightgray}4 & \cellcolor{lightgray}1 & 6 & 7 & 8 & 5 & 9 & 10 & 11\\
\cellcolor{lightgray}3 & \cellcolor{lightgray}4 & \cellcolor{lightgray}1 & \cellcolor{lightgray}2 & 7 & 8 & 5 & 6 & 10 & 11 & 9\\
9 & 5 & 6 & 7 & 10 & 1 & 2 & 11 & 3 & 4 & 8\\
10 & 6 & 7 & 8 & 9 & 11 & 1 & 4 & 5 & 3 & 2\\
6 & 7 & 8 & 5 & 11 & 3 & 9 & 10 & 2 & 1 & 4\\
11 & 8 & 5 & 6 & 1 & 9 & 10 & 2 & 4 & 7 & 3\\
5 & 11 & 9 & 10 & 2 & 4 & 6 & 3 & 7 & 8 & 1\\
7 & 9 & 10 & 11 & 4 & 5 & 3 & 1 & 8 & 2 & 6\\
8 & 10 & 11 & 9 & 3 & 2 & 4 & 7 & 1 & 6 & 5\\ \hline
\end{array}$\quad
    $\arraycolsep=1pt\begin{array}{|ccccccccccc|} \hline
\cellcolor{lightgray}3 & \cellcolor{lightgray}4 & \cellcolor{lightgray}1 & \cellcolor{lightgray}2 & 7 & 8 & 5 & 6 & 10 & 11 & 9\\
\cellcolor{lightgray}4 & \cellcolor{lightgray}1 & \cellcolor{lightgray}2 & \cellcolor{lightgray}3 & 9 & 5 & 10 & 11 & 7 & 6 & 8\\
\cellcolor{lightgray}1 & \cellcolor{lightgray}2 & \cellcolor{lightgray}3 & \cellcolor{lightgray}4 & 5 & 6 & 7 & 8 & 11 & 9 & 10\\
\cellcolor{lightgray}2 & \cellcolor{lightgray}3 & \cellcolor{lightgray}4 & \cellcolor{lightgray}1 & 6 & 7 & 8 & 5 & 9 & 10 & 11\\
7 & 11 & 5 & 6 & 3 & 2 & 9 & 10 & 1 & 8 & 4\\
8 & 9 & 6 & 7 & 11 & 10 & 2 & 3 & 4 & 1 & 5\\
5 & 10 & 7 & 8 & 1 & 9 & 11 & 2 & 6 & 4 & 3\\
6 & 7 & 8 & 5 & 10 & 11 & 4 & 9 & 3 & 2 & 1\\
10 & 6 & 11 & 9 & 4 & 3 & 1 & 7 & 8 & 5 & 2\\
11 & 8 & 9 & 10 & 2 & 1 & 6 & 4 & 5 & 3 & 7\\
9 & 5 & 10 & 11 & 8 & 4 & 3 & 1 & 2 & 7 & 6\\ \hline 
    \end{array}$\quad
    $\arraycolsep=1pt\begin{array}{|ccccccccccc|} \hline 
\cellcolor{lightgray}2 & \cellcolor{lightgray}3 & \cellcolor{lightgray}4 & \cellcolor{lightgray}1 & 6 & 7 & 8 & 5 & 9 & 10 & 11\\
\cellcolor{lightgray}3 & \cellcolor{lightgray}4 & \cellcolor{lightgray}1 & \cellcolor{lightgray}2 & 7 & 8 & 5 & 6 & 10 & 11 & 9\\
\cellcolor{lightgray}4 & \cellcolor{lightgray}1 & \cellcolor{lightgray}2 & \cellcolor{lightgray}3 & 11 & 9 & 6 & 10 & 8 & 7 & 5\\
\cellcolor{lightgray}1 & \cellcolor{lightgray}2 & \cellcolor{lightgray}3 & \cellcolor{lightgray}4 & 5 & 6 & 7 & 8 & 11 & 9 & 10\\
6 & 7 & 8 & 5 & 9 & 10 & 11 & 1 & 4 & 3 & 2\\
7 & 8 & 11 & 6 & 10 & 4 & 3 & 9 & 2 & 5 & 1\\
8 & 5 & 9 & 7 & 4 & 11 & 10 & 3 & 1 & 2 & 6\\
5 & 6 & 10 & 8 & 3 & 2 & 9 & 11 & 7 & 1 & 4\\
9 & 10 & 7 & 11 & 8 & 1 & 4 & 2 & 5 & 6 & 3\\
10 & 11 & 5 & 9 & 1 & 3 & 2 & 7 & 6 & 4 & 8\\
11 & 9 & 6 & 10 & 2 & 5 & 1 & 4 & 3 & 8 & 7\\ \hline 
    \end{array}$}

\vspace{0.25cm}

    \scalebox{0.8}{\;$\arraycolsep=1pt\begin{array}{|ccccccccccc|} \hline 
11 & 8 & 7 & 9 & 1 & 2 & 3 & 10 & 4 & 6 & 5\\
6 & 9 & 10 & 11 & 2 & 3 & 4 & 1 & 5 & 8 & 7\\
9 & 10 & 5 & 8 & 3 & 4 & 1 & 11 & 7 & 2 & 6\\
8 & 5 & 11 & 10 & 4 & 1 & 2 & 9 & 6 & 7 & 3\\
1 & 2 & 3 & 4 & \cellcolor{lightgray}5 & \cellcolor{lightgray}6 & \cellcolor{lightgray}7 & \cellcolor{lightgray}8 & 11 & 9 & 10\\
2 & 3 & 4 & 1 & \cellcolor{lightgray}6 & \cellcolor{lightgray}7 & \cellcolor{lightgray}8 & \cellcolor{lightgray}5 & 9 & 10 & 11\\
3 & 4 & 1 & 2 & \cellcolor{lightgray}7 & \cellcolor{lightgray}8 & \cellcolor{lightgray}5 & \cellcolor{lightgray}6 & 10 & 11 & 9\\
10 & 11 & 9 & 3 & \cellcolor{lightgray}8 & \cellcolor{lightgray}5 & \cellcolor{lightgray}6 & \cellcolor{lightgray}7 & 1 & 4 & 2\\
7 & 1 & 6 & 5 & 11 & 9 & 10 & 4 & 2 & 3 & 8\\
4 & 6 & 8 & 7 & 9 & 10 & 11 & 2 & 3 & 5 & 1\\
5 & 7 & 2 & 6 & 10 & 11 & 9 & 3 & 8 & 1 & 4\\ \hline 
    \end{array}$\quad
    $\arraycolsep=1pt\begin{array}{|ccccccccccc|} \hline 
10 & 5 & 6 & 11 & 9 & 1 & 2 & 3 & 7 & 8 & 4\\
9 & 11 & 5 & 8 & 10 & 2 & 3 & 4 & 1 & 7 & 6\\
11 & 7 & 9 & 10 & 2 & 3 & 4 & 1 & 6 & 5 & 8\\
5 & 9 & 10 & 6 & 11 & 4 & 1 & 2 & 8 & 3 & 7\\
4 & 10 & 11 & 9 & \cellcolor{lightgray}8 & \cellcolor{lightgray}5 & \cellcolor{lightgray}6 & \cellcolor{lightgray}7 & 2 & 1 & 3\\
1 & 2 & 3 & 4 & \cellcolor{lightgray}5 & \cellcolor{lightgray}6 & \cellcolor{lightgray}7 & \cellcolor{lightgray}8 & 11 & 9 & 10\\
2 & 3 & 4 & 1 & \cellcolor{lightgray}6 & \cellcolor{lightgray}7 & \cellcolor{lightgray}8 & \cellcolor{lightgray}5 & 9 & 10 & 11\\
3 & 4 & 1 & 2 & \cellcolor{lightgray}7 & \cellcolor{lightgray}8 & \cellcolor{lightgray}5 & \cellcolor{lightgray}6 & 10 & 11 & 9\\
6 & 8 & 2 & 7 & 1 & 11 & 9 & 10 & 3 & 4 & 5\\
8 & 1 & 7 & 5 & 3 & 9 & 10 & 11 & 4 & 6 & 2\\
7 & 6 & 8 & 3 & 4 & 10 & 11 & 9 & 5 & 2 & 1\\ \hline 
    \end{array}$\quad
    $\arraycolsep=1pt\begin{array}{|ccccccccccc|} \hline 
7 & 6 & 9 & 10 & 3 & 11 & 1 & 2 & 5 & 4 & 8\\
11 & 10 & 6 & 7 & 4 & 9 & 2 & 3 & 8 & 5 & 1\\
5 & 9 & 11 & 6 & 1 & 10 & 3 & 4 & 2 & 8 & 7\\
10 & 11 & 8 & 9 & 2 & 3 & 4 & 1 & 7 & 6 & 5\\
3 & 4 & 1 & 2 & \cellcolor{lightgray}7 & \cellcolor{lightgray}8 & \cellcolor{lightgray}5 & \cellcolor{lightgray}6 & 10 & 11 & 9\\
9 & 1 & 10 & 11 & \cellcolor{lightgray}8 & \cellcolor{lightgray}5 & \cellcolor{lightgray}6 & \cellcolor{lightgray}7 & 3 & 2 & 4\\
1 & 2 & 3 & 4 & \cellcolor{lightgray}5 & \cellcolor{lightgray}6 & \cellcolor{lightgray}7 & \cellcolor{lightgray}8 & 11 & 9 & 10\\
2 & 3 & 4 & 1 & \cellcolor{lightgray}6 & \cellcolor{lightgray}7 & \cellcolor{lightgray}8 & \cellcolor{lightgray}5 & 9 & 10 & 11\\
8 & 7 & 5 & 3 & 10 & 2 & 11 & 9 & 4 & 1 & 6\\
6 & 5 & 2 & 8 & 11 & 4 & 9 & 10 & 1 & 7 & 3\\
4 & 8 & 7 & 5 & 9 & 1 & 10 & 11 & 6 & 3 & 2\\ \hline 
    \end{array}$\quad
    $\arraycolsep=1pt\begin{array}{|ccccccccccc|} \hline 
9 & 10 & 11 & 5 & 2 & 3 & 4 & 1 & 8 & 7 & 6\\
10 & 8 & 7 & 9 & 3 & 4 & 11 & 2 & 6 & 1 & 5\\
8 & 11 & 10 & 7 & 4 & 1 & 9 & 3 & 5 & 6 & 2\\
7 & 6 & 9 & 11 & 1 & 2 & 10 & 4 & 3 & 5 & 8\\
2 & 3 & 4 & 1 & \cellcolor{lightgray}6 & \cellcolor{lightgray}7 & \cellcolor{lightgray}8 & \cellcolor{lightgray}5 & 9 & 10 & 11\\
3 & 4 & 1 & 2 & \cellcolor{lightgray}7 & \cellcolor{lightgray}8 & \cellcolor{lightgray}5 & \cellcolor{lightgray}6 & 10 & 11 & 9\\
11 & 9 & 2 & 10 & \cellcolor{lightgray}8 & \cellcolor{lightgray}5 & \cellcolor{lightgray}6 & \cellcolor{lightgray}7 & 4 & 3 & 1\\
1 & 2 & 3 & 4 & \cellcolor{lightgray}5 & \cellcolor{lightgray}6 & \cellcolor{lightgray}7 & \cellcolor{lightgray}8 & 11 & 9 & 10\\
4 & 5 & 8 & 6 & 9 & 10 & 3 & 11 & 1 & 2 & 7\\
5 & 7 & 6 & 3 & 10 & 11 & 1 & 9 & 2 & 8 & 4\\
6 & 1 & 5 & 8 & 11 & 9 & 2 & 10 & 7 & 4 & 3\\ \hline 
    \end{array}$}

\vspace{0.25cm}
    
    \scalebox{0.8}{\;$\arraycolsep=1pt\begin{array}{|ccccccccccc|} \hline 
5 & 9 & 8 & 7 & 10 & 4 & 6 & 11 & 1 & 3 & 2\\
8 & 6 & 9 & 5 & 11 & 10 & 1 & 7 & 4 & 2 & 3\\
6 & 5 & 7 & 9 & 8 & 11 & 10 & 2 & 3 & 4 & 1\\
9 & 7 & 6 & 8 & 3 & 5 & 11 & 10 & 2 & 1 & 4\\
10 & 8 & 2 & 11 & 1 & 9 & 4 & 3 & 7 & 6 & 5\\
11 & 10 & 5 & 3 & 4 & 2 & 9 & 1 & 6 & 7 & 8\\
4 & 11 & 10 & 6 & 2 & 1 & 3 & 9 & 8 & 5 & 7\\
7 & 1 & 11 & 10 & 9 & 3 & 2 & 4 & 5 & 8 & 6\\
3 & 4 & 1 & 2 & 7 & 6 & 5 & 8 & \cellcolor{lightgray}10 & \cellcolor{lightgray}11 & \cellcolor{lightgray}9\\
2 & 3 & 4 & 1 & 6 & 7 & 8 & 5 & \cellcolor{lightgray}9 & \cellcolor{lightgray}10 & \cellcolor{lightgray}11\\
1 & 2 & 3 & 4 & 5 & 8 & 7 & 6 & \cellcolor{lightgray}11 & \cellcolor{lightgray}9 & \cellcolor{lightgray}10\\ \hline 
    \end{array}$\quad
    $\arraycolsep=1pt\begin{array}{|ccccccccccc|} \hline 
6 & 11 & 5 & 8 & 4 & 9 & 10 & 7 & 2 & 1 & 3\\
5 & 7 & 11 & 6 & 8 & 1 & 9 & 10 & 3 & 4 & 2\\
7 & 6 & 8 & 11 & 10 & 5 & 2 & 9 & 1 & 3 & 4\\
11 & 8 & 7 & 5 & 9 & 10 & 6 & 3 & 4 & 2 & 1\\
8 & 9 & 10 & 3 & 2 & 11 & 1 & 4 & 5 & 7 & 6\\
4 & 5 & 9 & 10 & 1 & 3 & 11 & 2 & 8 & 6 & 7\\
10 & 1 & 6 & 9 & 3 & 2 & 4 & 11 & 7 & 8 & 5\\
9 & 10 & 2 & 7 & 11 & 4 & 3 & 1 & 6 & 5 & 8\\
2 & 3 & 4 & 1 & 6 & 7 & 8 & 5 & \cellcolor{lightgray}11 & \cellcolor{lightgray}9 & \cellcolor{lightgray}10\\
1 & 2 & 3 & 4 & 5 & 8 & 7 & 6 & \cellcolor{lightgray}10 & \cellcolor{lightgray}11 & \cellcolor{lightgray}9\\
3 & 4 & 1 & 2 & 7 & 6 & 5 & 8 & \cellcolor{lightgray}9 & \cellcolor{lightgray}10 & \cellcolor{lightgray}11\\ \hline 
    \end{array}$\quad
    $\arraycolsep=1pt\begin{array}{|ccccccccccc|} \hline 
8 & 7 & 10 & 6 & 11 & 5 & 9 & 4 & 3 & 2 & 1\\
7 & 5 & 8 & 10 & 1 & 11 & 6 & 9 & 2 & 3 & 4\\
10 & 8 & 6 & 5 & 9 & 2 & 11 & 7 & 4 & 1 & 3\\
6 & 10 & 5 & 7 & 8 & 9 & 3 & 11 & 1 & 4 & 2\\
11 & 1 & 9 & 8 & 4 & 3 & 10 & 2 & 6 & 5 & 7\\
5 & 11 & 2 & 9 & 3 & 1 & 4 & 10 & 7 & 8 & 6\\
9 & 6 & 11 & 3 & 10 & 4 & 2 & 1 & 5 & 7 & 8\\
4 & 9 & 7 & 11 & 2 & 10 & 1 & 3 & 8 & 6 & 5\\
1 & 2 & 3 & 4 & 5 & 8 & 7 & 6 & \cellcolor{lightgray}9 & \cellcolor{lightgray}10 & \cellcolor{lightgray}11\\
3 & 4 & 1 & 2 & 7 & 6 & 5 & 8 & \cellcolor{lightgray}11 & \cellcolor{lightgray}9 & \cellcolor{lightgray}10\\
2 & 3 & 4 & 1 & 6 & 7 & 8 & 5 & \cellcolor{lightgray}10 & \cellcolor{lightgray}11 & \cellcolor{lightgray}9\\ \hline 
    \end{array}$}

\end{append}

\begin{append}
\label{4D}

The six arrays in each row form a latin cube, and when all six latin cubes are combined gives a latin hypercube with a realization of $(2^21^2)$.

\vspace{0.25cm}

\centering
$\arraycolsep=3pt\begin{array}{|cccccc|} \hline
\cellcolor{lightgray}1 & \cellcolor{lightgray}2 & 5 & 6 & 3 & 4\\
\cellcolor{lightgray}2 & \cellcolor{lightgray}1 & 6 & 5 & 4 & 3\\
3 & 6 & 2 & 4 & 5 & 1\\
4 & 5 & 1 & 3 & 6 & 2\\
6 & 3 & 4 & 2 & 1 & 5\\
5 & 4 & 3 & 1 & 2 & 6\\
\hline \end{array}$\quad
$\arraycolsep=3pt\begin{array}{|cccccc|} \hline
\cellcolor{lightgray}2 & \cellcolor{lightgray}1 & 6 & 5 & 4 & 3\\
\cellcolor{lightgray}1 & \cellcolor{lightgray}2 & 5 & 6 & 3 & 4\\
5 & 4 & 3 & 1 & 2 & 6\\
6 & 3 & 4 & 2 & 1 & 5\\
4 & 5 & 1 & 3 & 6 & 2\\
3 & 6 & 2 & 4 & 5 & 1\\
\hline \end{array}$\quad
$\arraycolsep=3pt\begin{array}{|cccccc|} \hline
6 & 5 & 4 & 3 & 1 & 2\\
3 & 4 & 2 & 1 & 5 & 6\\
4 & 3 & 1 & 2 & 6 & 5\\
1 & 2 & 6 & 5 & 3 & 4\\
5 & 6 & 3 & 4 & 2 & 1\\
2 & 1 & 5 & 6 & 4 & 3\\
\hline \end{array}$\quad
$\arraycolsep=3pt\begin{array}{|cccccc|} \hline
5 & 6 & 3 & 4 & 2 & 1\\
4 & 3 & 1 & 2 & 6 & 5\\
2 & 1 & 5 & 6 & 3 & 4\\
3 & 4 & 2 & 1 & 5 & 6\\
1 & 2 & 6 & 5 & 4 & 3\\
6 & 5 & 4 & 3 & 1 & 2\\
\hline \end{array}$\quad
$\arraycolsep=3pt\begin{array}{|cccccc|} \hline
3 & 4 & 2 & 1 & 5 & 6\\
6 & 5 & 4 & 3 & 1 & 2\\
1 & 2 & 6 & 5 & 4 & 3\\
5 & 6 & 3 & 4 & 2 & 1\\
2 & 1 & 5 & 6 & 3 & 4\\
4 & 3 & 1 & 2 & 6 & 5\\
\hline \end{array}$\quad
$\arraycolsep=3pt\begin{array}{|cccccc|} \hline
4 & 3 & 1 & 2 & 6 & 5\\
5 & 6 & 3 & 4 & 2 & 1\\
6 & 5 & 4 & 3 & 1 & 2\\
2 & 1 & 5 & 6 & 4 & 3\\
3 & 4 & 2 & 1 & 5 & 6\\
1 & 2 & 6 & 5 & 3 & 4\\
\hline \end{array}$\quad

\vspace{0.2cm}
\hrule
\vspace{0.2cm}

$\arraycolsep=3pt\begin{array}{|cccccc|} \hline
\cellcolor{lightgray}2 & \cellcolor{lightgray}1 & 6 & 5 & 4 & 3\\
\cellcolor{lightgray}\cellcolor{lightgray}1 & \cellcolor{lightgray}2 & 5 & 6 & 3 & 4\\
4 & 5 & 1 & 3 & 6 & 2\\
3 & 4 & 2 & 1 & 5 & 6\\
5 & 6 & 3 & 4 & 2 & 1\\
6 & 3 & 4 & 2 & 1 & 5\\
\hline \end{array}$\quad
$\arraycolsep=3pt\begin{array}{|cccccc|} \hline
\cellcolor{lightgray}1 & \cellcolor{lightgray}2 & 5 & 6 & 3 & 4\\
\cellcolor{lightgray}2 & \cellcolor{lightgray}1 & 6 & 5 & 4 & 3\\
6 & 3 & 4 & 2 & 1 & 5\\
5 & 6 & 3 & 4 & 2 & 1\\
3 & 4 & 2 & 1 & 5 & 6\\
4 & 5 & 1 & 3 & 6 & 2\\
\hline \end{array}$\quad
$\arraycolsep=3pt\begin{array}{|cccccc|} \hline
4 & 3 & 1 & 2 & 6 & 5\\
5 & 6 & 3 & 4 & 2 & 1\\
3 & 4 & 2 & 1 & 5 & 6\\
2 & 1 & 5 & 6 & 4 & 3\\
6 & 5 & 4 & 3 & 1 & 2\\
1 & 2 & 6 & 5 & 3 & 4\\
\hline \end{array}$\quad
$\arraycolsep=3pt\begin{array}{|cccccc|} \hline
3 & 4 & 2 & 1 & 5 & 6\\
6 & 5 & 4 & 3 & 1 & 2\\
1 & 2 & 6 & 5 & 4 & 3\\
4 & 3 & 1 & 2 & 6 & 5\\
2 & 1 & 5 & 6 & 3 & 4\\
5 & 6 & 3 & 4 & 2 & 1\\
\hline \end{array}$\quad
$\arraycolsep=3pt\begin{array}{|cccccc|} \hline
5 & 6 & 3 & 4 & 2 & 1\\
4 & 3 & 1 & 2 & 6 & 5\\
2 & 1 & 5 & 6 & 3 & 4\\
6 & 5 & 4 & 3 & 1 & 2\\
1 & 2 & 6 & 5 & 4 & 3\\
3 & 4 & 2 & 1 & 5 & 6\\
\hline \end{array}$\quad
$\arraycolsep=3pt\begin{array}{|cccccc|} \hline
6 & 5 & 4 & 3 & 1 & 2\\
3 & 4 & 2 & 1 & 5 & 6\\
5 & 6 & 3 & 4 & 2 & 1\\
1 & 2 & 6 & 5 & 3 & 4\\
4 & 3 & 1 & 2 & 6 & 5\\
2 & 1 & 5 & 6 & 4 & 3\\
\hline \end{array}$\quad

\vspace{0.2cm}
\hrule
\vspace{0.2cm}

$\arraycolsep=3pt\begin{array}{|cccccc|} \hline
5 & 6 & 4 & 3 & 1 & 2\\
6 & 5 & 3 & 4 & 2 & 1\\
1 & 3 & 6 & 2 & 4 & 5\\
2 & 1 & 5 & 6 & 3 & 4\\
3 & 4 & 2 & 1 & 5 & 6\\
4 & 2 & 1 & 5 & 6 & 3\\
\hline \end{array}$\quad
$\arraycolsep=3pt\begin{array}{|cccccc|} \hline
6 & 5 & 3 & 4 & 2 & 1\\
5 & 6 & 4 & 3 & 1 & 2\\
4 & 2 & 1 & 5 & 6 & 3\\
3 & 4 & 2 & 1 & 5 & 6\\
2 & 1 & 5 & 6 & 3 & 4\\
1 & 3 & 6 & 2 & 4 & 5\\
\hline \end{array}$\quad
$\arraycolsep=3pt\begin{array}{|cccccc|} \hline
2 & 1 & 5 & 6 & 3 & 4\\
4 & 3 & 1 & 2 & 6 & 5\\
6 & 5 & \cellcolor{lightgray}3 & \cellcolor{lightgray}4 & 1 & 2\\
5 & 6 & \cellcolor{lightgray}4 & \cellcolor{lightgray}3 & 2 & 1\\
1 & 2 & 6 & 5 & 4 & 3\\
3 & 4 & 2 & 1 & 5 & 6\\
\hline \end{array}$\quad
$\arraycolsep=3pt\begin{array}{|cccccc|} \hline
1 & 2 & 6 & 5 & 4 & 3\\
3 & 4 & 2 & 1 & 5 & 6\\
5 & 6 & \cellcolor{lightgray}4 & \cellcolor{lightgray}3 & 2 & 1\\
6 & 5 & \cellcolor{lightgray}3 & \cellcolor{lightgray}4 & 1 & 2\\
4 & 3 & 1 & 2 & 6 & 5\\
2 & 1 & 5 & 6 & 3 & 4\\
\hline \end{array}$\quad
$\arraycolsep=3pt\begin{array}{|cccccc|} \hline
4 & 3 & 1 & 2 & 6 & 5\\
2 & 1 & 5 & 6 & 3 & 4\\
3 & 4 & 2 & 1 & 5 & 6\\
1 & 2 & 6 & 5 & 4 & 3\\
5 & 6 & 3 & 4 & 1 & 2\\
6 & 5 & 4 & 3 & 2 & 1\\
\hline \end{array}$\quad
$\arraycolsep=3pt\begin{array}{|cccccc|} \hline
3 & 4 & 2 & 1 & 5 & 6\\
1 & 2 & 6 & 5 & 4 & 3\\
2 & 1 & 5 & 6 & 3 & 4\\
4 & 3 & 1 & 2 & 6 & 5\\
6 & 5 & 4 & 3 & 2 & 1\\
5 & 6 & 3 & 4 & 1 & 2\\
\hline \end{array}$\quad

\vspace{0.2cm}
\hrule
\vspace{0.2cm}

$\arraycolsep=3pt\begin{array}{|cccccc|} \hline
6 & 5 & 3 & 4 & 2 & 1\\
5 & 6 & 4 & 3 & 1 & 2\\
2 & 4 & 5 & 1 & 3 & 6\\
1 & 3 & 6 & 2 & 4 & 5\\
4 & 2 & 1 & 5 & 6 & 3\\
3 & 1 & 2 & 6 & 5 & 4\\
\hline \end{array}$\quad
$\arraycolsep=3pt\begin{array}{|cccccc|} \hline
5 & 6 & 4 & 3 & 1 & 2\\
6 & 5 & 3 & 4 & 2 & 1\\
3 & 1 & 2 & 6 & 5 & 4\\
4 & 2 & 1 & 5 & 6 & 3\\
1 & 3 & 6 & 2 & 4 & 5\\
2 & 4 & 5 & 1 & 3 & 6\\
\hline \end{array}$\quad
$\arraycolsep=3pt\begin{array}{|cccccc|} \hline
3 & 4 & 2 & 1 & 5 & 6\\
1 & 2 & 6 & 5 & 4 & 3\\
5 & 6 & \cellcolor{lightgray}4 & \cellcolor{lightgray}3 & 2 & 1\\
6 & 5 & \cellcolor{lightgray}3 & \cellcolor{lightgray}4 & 1 & 2\\
2 & 1 & 5 & 6 & 3 & 4\\
4 & 3 & 1 & 2 & 6 & 5\\
\hline \end{array}$\quad
$\arraycolsep=3pt\begin{array}{|cccccc|} \hline
4 & 3 & 1 & 2 & 6 & 5\\
2 & 1 & 5 & 6 & 3 & 4\\
6 & 5 & \cellcolor{lightgray}3 & \cellcolor{lightgray}4 & 1 & 2\\
5 & 6 & \cellcolor{lightgray}4 & \cellcolor{lightgray}3 & 2 & 1\\
3 & 4 & 2 & 1 & 5 & 6\\
1 & 2 & 6 & 5 & 4 & 3\\
\hline \end{array}$\quad
$\arraycolsep=3pt\begin{array}{|cccccc|} \hline
1 & 2 & 6 & 5 & 4 & 3\\
3 & 4 & 2 & 1 & 5 & 6\\
4 & 3 & 1 & 2 & 6 & 5\\
2 & 1 & 5 & 6 & 3 & 4\\
6 & 5 & 4 & 3 & 2 & 1\\
5 & 6 & 3 & 4 & 1 & 2\\
\hline \end{array}$\quad
$\arraycolsep=3pt\begin{array}{|cccccc|} \hline
2 & 1 & 5 & 6 & 3 & 4\\
4 & 3 & 1 & 2 & 6 & 5\\
1 & 2 & 6 & 5 & 4 & 3\\
3 & 4 & 2 & 1 & 5 & 6\\
5 & 6 & 3 & 4 & 1 & 2\\
6 & 5 & 4 & 3 & 2 & 1\\
\hline \end{array}$\quad

\vspace{0.2cm}
\hrule
\vspace{0.2cm}

$\arraycolsep=3pt\begin{array}{|cccccc|} \hline
3 & 4 & 1 & 2 & 5 & 6\\
4 & 3 & 2 & 1 & 6 & 5\\
6 & 2 & 4 & 5 & 1 & 3\\
5 & 6 & 3 & 4 & 2 & 1\\
2 & 1 & 5 & 6 & 3 & 4\\
1 & 5 & 6 & 3 & 4 & 2\\
\hline \end{array}$\quad
$\arraycolsep=3pt\begin{array}{|cccccc|} \hline
4 & 3 & 2 & 1 & 6 & 5\\
3 & 4 & 1 & 2 & 5 & 6\\
1 & 5 & 6 & 3 & 4 & 2\\
2 & 1 & 5 & 6 & 3 & 4\\
5 & 6 & 3 & 4 & 2 & 1\\
6 & 2 & 4 & 5 & 1 & 3\\
\hline \end{array}$\quad
$\arraycolsep=3pt\begin{array}{|cccccc|} \hline
1 & 2 & 6 & 5 & 4 & 3\\
6 & 5 & 4 & 3 & 1 & 2\\
2 & 1 & 5 & 6 & 3 & 4\\
4 & 3 & 1 & 2 & 5 & 6\\
3 & 4 & 2 & 1 & 6 & 5\\
5 & 6 & 3 & 4 & 2 & 1\\
\hline \end{array}$\quad
$\arraycolsep=3pt\begin{array}{|cccccc|} \hline
2 & 1 & 5 & 6 & 3 & 4\\
5 & 6 & 3 & 4 & 2 & 1\\
3 & 4 & 1 & 2 & 6 & 5\\
1 & 2 & 6 & 5 & 4 & 3\\
6 & 5 & 4 & 3 & 1 & 2\\
4 & 3 & 2 & 1 & 5 & 6\\
\hline \end{array}$\quad
$\arraycolsep=3pt\begin{array}{|cccccc|} \hline
6 & 5 & 4 & 3 & 1 & 2\\
1 & 2 & 6 & 5 & 4 & 3\\
5 & 6 & 3 & 4 & 2 & 1\\
3 & 4 & 2 & 1 & 6 & 5\\
4 & 3 & 1 & 2 & \cellcolor{lightgray}5 & 6\\
2 & 1 & 5 & 6 & 3 & 4\\
\hline \end{array}$\quad
$\arraycolsep=3pt\begin{array}{|cccccc|} \hline
5 & 6 & 3 & 4 & 2 & 1\\
2 & 1 & 5 & 6 & 3 & 4\\
4 & 3 & 2 & 1 & 5 & 6\\
6 & 5 & 4 & 3 & 1 & 2\\
1 & 2 & 6 & 5 & 4 & 3\\
3 & 4 & 1 & 2 & 6 & 5\\
\hline \end{array}$\quad

\vspace{0.2cm}
\hrule
\vspace{0.2cm}

$\arraycolsep=3pt\begin{array}{|cccccc|} \hline
4 & 3 & 2 & 1 & 6 & 5\\
3 & 4 & 1 & 2 & 5 & 6\\
5 & 1 & 3 & 6 & 2 & 4\\
6 & 2 & 4 & 5 & 1 & 3\\
1 & 5 & 6 & 3 & 4 & 2\\
2 & 6 & 5 & 4 & 3 & 1\\
\hline \end{array}$\quad
$\arraycolsep=3pt\begin{array}{|cccccc|} \hline
3 & 4 & 1 & 2 & 5 & 6\\
4 & 3 & 2 & 1 & 6 & 5\\
2 & 6 & 5 & 4 & 3 & 1\\
1 & 5 & 6 & 3 & 4 & 2\\
6 & 2 & 4 & 5 & 1 & 3\\
5 & 1 & 3 & 6 & 2 & 4\\
\hline \end{array}$\quad
$\arraycolsep=3pt\begin{array}{|cccccc|} \hline
5 & 6 & 3 & 4 & 2 & 1\\
2 & 1 & 5 & 6 & 3 & 4\\
1 & 2 & 6 & 5 & 4 & 3\\
3 & 4 & 2 & 1 & 6 & 5\\
4 & 3 & 1 & 2 & 5 & 6\\
6 & 5 & 4 & 3 & 1 & 2\\
\hline \end{array}$\quad
$\arraycolsep=3pt\begin{array}{|cccccc|} \hline
6 & 5 & 4 & 3 & 1 & 2\\
1 & 2 & 6 & 5 & 4 & 3\\
4 & 3 & 2 & 1 & 5 & 6\\
2 & 1 & 5 & 6 & 3 & 4\\
5 & 6 & 3 & 4 & 2 & 1\\
3 & 4 & 1 & 2 & 6 & 5\\
\hline \end{array}$\quad
$\arraycolsep=3pt\begin{array}{|cccccc|} \hline
2 & 1 & 5 & 6 & 3 & 4\\
5 & 6 & 3 & 4 & 2 & 1\\
6 & 5 & 4 & 3 & 1 & 2\\
4 & 3 & 1 & 2 & 5 & 6\\
3 & 4 & 2 & 1 & 6 & 5\\
1 & 2 & 6 & 5 & 4 & 3\\
\hline \end{array}$\quad
$\arraycolsep=3pt\begin{array}{|cccccc|} \hline
1 & 2 & 6 & 5 & 4 & 3\\
6 & 5 & 4 & 3 & 1 & 2\\
3 & 4 & 1 & 2 & 6 & 5\\
5 & 6 & 3 & 4 & 2 & 1\\
2 & 1 & 5 & 6 & 3 & 4\\
4 & 3 & 2 & 1 & 5 & \cellcolor{lightgray}6\\
\hline \end{array}$\quad
    
\end{append}

\section*{Acknowledgement}
Funding: This work was supported by The Australian Research Council, through the Centre of Excellence for Plant Success in Nature and Agriculture (CE200100015) and the second author would like to acknowledge the support of the Australian Government through a Research Training Program (RTP) Scholarship.

\printbibliography

\end{document}